\newtheorem{theorem}{Theorem}
\newtheorem*{theorem*}{Theorem}
\newtheorem{remark}{Remark}
\newtheorem{corollary}[theorem]{Corollary}
\newtheorem{lemma}[theorem]{Lemma}
\newtheorem{definition}[theorem]{Definition}
\newtheorem{claim}[theorem]{Claim}
\newtheorem*{claim*}{Claim}
\newcommand{\diff}{\mathrm{d}}
\newcommand{\E}{\mathbb{E}}
\newcommand{\R}{\mathbb{R}}
\newcommand{\N}{\mathbb{N}}
\newcommand{\Z}{\mathbb{Z}}
\newcommand{\ind}[1]{\mathds{1}_{\{#1\}}}
\newcommand{\absolute}[1]{\left\lvert #1 \right\rvert}
\newcommand{\eulerE}{\textup{e}}
\newcommand{\dtv}[2]{\absolute{#1 - #2}_{\mathrm{tv}}}
\newcommand{\projDtv}[3]{\absolute{#1 - #2}_{#3}}
\DeclareMathOperator*{\esssup}{ess\,sup}
\newcommand{\symdiff}{\oplus}
\newcommand{\extended}[1]{\overline{#1}}
\newcommand{\dimension}{d}
\newcommand{\norm}[2]{\lVert #1 \rVert_{#2}}
\newcommand{\paths}{\mathfrak{P}}
\newcommand{\vertices}{V}
\newcommand{\edges}{E}
\newcommand{\innerVertices}{\vertices^{\textup{in}}}
\newcommand{\outerVertices}{\vertices^{\textup{out}}}
\newcommand{\graph}{G}
\newcommand{\neighborhood}{\Gamma}
\newcommand{\tuple}[1]{\pmb{#1}}
\newcommand{\disagreement}{\Delta}
\newcommand{\range}{R}
\newcommand{\groundspace}{\mathcal{X}}
\newcommand{\borel}{\mathcal{B}}
\newcommand{\boundedBorel}{\borel_{\text{b}}}
\newcommand{\volume}{\nu}
\newcommand{\countingMeasures}{\mathcal{N}}
\newcommand{\finiteCountingMeasures}{\countingMeasures_{\text{f}}}
\newcommand{\feasibleCountingMeasures}{\countingMeasures_{\hamiltonian}}
\newcommand{\countingAlgebra}{\mathcal{R}}
\newcommand{\finiteCountingAlgebra}{\countingAlgebra_{\text{f}}}
\newcommand{\support}{S}
\newcommand{\pointcount}{N}
\newcommand{\subregion}{\Lambda}
\newcommand{\probMeasures}{\mathcal{P}}
\newcommand{\hamiltonian}{H}
\newcommand{\potential}{\phi}
\newcommand{\activity}{\lambda}
\newcommand{\partitionFunction}{\Xi}
\newcommand{\gibbs}{\mu}
\newcommand{\normalBoundary}{\mathcal{C}}
\newcommand{\gibbsMeasures}{\mathcal{G}}
\newcommand{\stability}{S}
\newcommand{\localStability}{L}
\newcommand{\tempered}{C}
\newcommand{\weakTempered}{\hat{C}}
\newcommand{\influence}{W}
\newcommand{\jumpKernel}{K}
\newcommand{\filtration}{\mathcal{F}}
\renewcommand{\Pr}{\mathbb{P}}
\newcommand{\dist}{d}
\newcommand{\size}[1]{\left\lvert #1 \right\rvert}
\definecolor{foo}{HTML}{f7b500}
\title{Uniqueness of locally stable Gibbs point processes via spatial birth-death dynamics}
\author{Samuel Baguley, Andreas Göbel and Marcus Pappik}
\address{Hasso Plattner Institute, University of Potsdam, Germany}
\email{\{samuel.baguley, andreas.goebel, marcus.pappik\}@hpi.de}
\subjclass[2020]{Primary: 82B21, Secondary: 60G55, 60J76}
\begin{document}

\begin{abstract}
    We prove that for every locally stable and tempered pair potential $\potential$ with bounded range, there exists a unique infinite-volume Gibbs point process on $\R^{\dimension}$ for every activity $\activity < (\eulerE^{\localStability} \weakTempered_{\potential})^{-1}$, where $\localStability$ is the local stability constant and $\weakTempered_{\potential} \coloneqq \sup_{x \in \R^{\dimension}} \int_{\R^{\dimension}} 1 - \eulerE^{-\absolute{\potential(x, y)}} \diff y$ is the (weak) temperedness constant. 
    Our result extends the uniqueness regime that is given by the classical Ruelle--Penrose bound by a factor of at least $\eulerE$, where the improvements becomes larger as the negative parts of the potential become more prominent (i.e., for attractive interactions at low temperature).
    Our technique is based on the approach of Dyer et al. (Rand. Struct. \& Alg. ’04): we show that for any bounded region and any boundary condition, we can construct a Markov process (in our case spatial birth-death dynamics) that converges rapidly to the finite-volume Gibbs point process while effects of the boundary condition propagate sufficiently slowly. 
    As a result, we obtain a spatial mixing property that implies uniqueness of the infinite-volume Gibbs measure.
\end{abstract}

\maketitle

\begin{center}
\begin{minipage}{0.9\textwidth} 
    \textbf{Keywords:} Gibbs point processes~$\cdot$ uniqueness of Gibbs measures~$\cdot$ spatial birth-death processes~$\cdot$ coupling~$\cdot$ mixing times 
\end{minipage}
\end{center}

\begin{center}
\begin{minipage}{0.9\textwidth} 
  \textbf{Funding:} Andreas Göbel was funded by the project PAGES (project No. 467516565) of the German Research Foundation (DFG). Marcus Pappik was funded by the HPI Research School on Data Science and Engineering.
\end{minipage}
\end{center}

\section{Introduction}
In statistical physics, point processes play a central role as a mathematical model for gasses of interacting particles: the gas is modeled as a random point configuration in a ground space $\groundspace$, where every point in a configuration represents the location of a particle.
A corner stone of this approach for modelling gasses are Gibbs point processes, which historically arise from describing the distribution of particles as the solution to a variational principle that aims at minimizing the mean free excess energy of the system (see for example \cite[Section 1.3]{dereudre2019introduction}).
While these ideas date back all the way to Gibbs \cite{gibbs1902elementary} (and even further to the work of Boltzmann and Maxwell), the theory of Gibbs point processes was substantially developed further throughout the 60s and 70s by the work of Dobrushin \cite{dobruschin1968description}, Lanford and Ruelle \cite{lanford1969observables}, Preston \cite{preston1976random}, Georgii \cite{georgii1976canonical} and many others, and it remains an active area of research till the present days \cite{houdebert2022explicit,michelen2020connective,he2024analyticity,michelen2020analyticity,dereudre2020existence,jansen2019cluster,betsch2023uniqueness} (see also \cite{ruelle1999statistical} for a classical and  \cite{dereudre2019introduction,jansen2018gibbsian} for more modern introductions to the topic).

Given a finite-volume region $\subregion \subset \groundspace$ of the ground space $\groundspace$ (for simplicity, think $\groundspace = \R^{\dimension}$), a Gibbs point process on $\subregion$ is described by a probability density with respect to a Poisson point process on $\subregion$ of intensity 1.
This probability density is specified by an activity $\activity \in \R_{\ge 0}$ and an energy function $\hamiltonian$, which assigns a value from $(-\infty, \infty]$ to finite point sets $\eta \subset \groundspace$.
Given $\activity$ and $\hamiltonian$, every finite point set $\eta \subset \subregion$ has a density proportional to $\activity^{\size{\eta}} \eulerE^{-H(\eta)}$.

Opposed to this finite-volume setting, statistical physics is usually interested in large particle systems. 
Such systems are more accurately modeled by considering point processes on regions of infinite volume, while the configuration space is extended to all locally finite point sets (i.e., we only require finitely many points in every bounded subregion).
In this case, the definition via probability densities often fails as the density cannot be normalized properly and configurations typically have infinite energy.
Instead, given an activity $\activity$ and energy function $\hamiltonian$, an infinite-volume Gibbs point process is defined by the \emph{Dobrushin--Lanford--Ruelle formalism} \cite{dobruschin1968description,lanford1969observables} (DLR), which imposes a set of consistency requirements: any finite-volume projection of an infinite-volume Gibbs point process should be consistent with its finite-volume definition discussed above (see \Cref{def:dlr}).
Due to the non-constructive nature of this approach, two main questions arise.
Given an activity $\activity$ and an energy function $\hamiltonian$:
\begin{itemize}
    \item \emph{Existence:} is there at least one point process on $\groundspace$ that satisfies the DLR formalism?
    \item \emph{Uniqueness:} is there at most one point process on $\groundspace$ that satisfies the DLR formalism?\footnote{Note that usually in mathematics, uniqueness means that there is exactly one object that satisfies the requirements. However, for Gibbs point processes, this task is usually split into proving that there is a most one and at least one Gibbs measure. Hence it will be more convenient to refer to these two problems as existence and uniqueness respectively.}
\end{itemize}

While the existence of an infinite-volume Gibbs measures can be proven under fairly general assumptions, which are usually stated in terms of the energy function $\hamiltonian$ alone, proving uniqueness is notoriously more difficult \cite{dereudre2019introduction,jansen2018gibbsian,dereudre2020existence}.
In particular, conditions for uniqueness often involve that the activity $\activity$ is sufficiently small, depending on various properties of the energy function (see for example \cite{helmuth2022correlation,michelen2020analyticity,michelen2020connective}).

On top of being a genuinely intriguing mathematical question, uniqueness of infinite-volume Gibbs measures in the sense of DLR gained a lot of attention in mathematical physics as a probabilistic notion of \emph{absence of phase transitions} \cite{jansen2018gibbsian,michelen2020analyticity,michelen2020connective,houdebert2022explicit,betsch2023uniqueness}.
As such, it poses an alternative to (much older) analytic notions, such as analyticity of the infinite-volume pressure (see \cite[Section 4]{jansen2018gibbsian} for a detailed introduction), which are usually proven by studying the convergence of series expansions \cite{penrose1963convergence,procacci2017convergence,jansen2022cluster} or the contraction integral identities \cite{michelen2020analyticity,michelen2020connective,he2024analyticity}.
While in many cases such analytic proof techniques can be modified to yield uniqueness in the DLR sense (see for example \cite{jansen2019cluster} and \cite{michelen2020analyticity}) and although both notions of absence of phase transitions often hold in similar activity regimes, to the best of our knowledge no general rigorous connection has been proved.
We defer a more detailed discussion to \Cref{sec:discussion} and proceed to sketching our main result.

\subsection{Main result and proof technique}
In this paper, we focus on giving an activity regime for uniqueness of the infinite-volume Gibbs point process for a class of energy functions that result from pair interactions,
that is, the case that there is some pair potential $\potential: \groundspace^2 \to (-\infty, \infty]$ such that $\hamiltonian(\eta) = \sum_{x, y \in \binom{\eta}{2}} \potential(x, y)$, where we sum over unordered pairs of distinct points in $\eta$.
Moreover, we assume the following:
\begin{itemize}
    \item \emph{Weak temperedness:} We call $\potential$ weakly tempered if
    \[
        \weakTempered_{\potential} \coloneqq \sup_{x \in \groundspace} \int_{\groundspace} 1 - \eulerE^{- \absolute{\potential(x, y)}} \diff y < \infty ,
    \]
    and we refer to $\weakTempered_{\potential}$ as the weak temperedness constant. This condition should be thought of as assuming that the total influence of every point on the rest of the space is bounded.\footnote{While this definition has appeared before in the literature (e.g., \cite{procacci2017convergence}), we are not aware of it having a name. We call it weak temperedness due to its resemblance to the temperedness condition $\tempered_{\potential} \coloneqq \sup_{x \in \groundspace} \int_{\groundspace} \lvert 1 - \eulerE^{- \potential(x, y)}\rvert \diff y < \infty$ and the fact that $\weakTempered_{\potential} \le \tempered_{\potential}$.}
    \item \emph{Local stability:} We call $\potential$ locally stable with constant $\localStability \in \R_{\ge 0}$ if for every finite $\eta \subset \groundspace$ with $\hamiltonian(\eta) < \infty$ and every $x \in \groundspace$ it holds that
    \[
        \hamiltonian(\eta \cup x) - \hamiltonian(\eta) \ge - \localStability.
    \]
    This condition should be thought of as assuming that for any realizable configuration of points $\eta$, adding a point $x$ decreases the energy of the configuration by at most $\localStability$.
    \item \emph{Bounded range:} We say that the has bounded range if there exists some $\range \in \R_{\ge 0}$ such that $\potential(x, y) = 0$ for every pair of points $x, y \in \groundspace$ of distance at least $\range$.
\end{itemize}

Our main result is the following uniqueness statement for Gibbs point processes in Euclidean space $\groundspace = \R^{\dimension}$.
\begin{theorem*}[{Main}]
    For every bounded-range pair potential $\potential$ on $\R^{\dimension}$ that is locally stable with constant $\localStability$ and weakly tempered with constant $\weakTempered_{\potential}$, and every activity 
$\activity < (\eulerE^{\localStability} \weakTempered_{\potential})^{-1}$,
there exists a unique Gibbs point process on $\R^{\dimension}$ in the sense of the Dobrushin--Lanford--Ruelle formalism.
\end{theorem*}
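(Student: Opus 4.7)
The plan is to follow the path-coupling-meets-disagreement-percolation strategy of Dyer--Sinclair--Vigoda--Weitz, adapted to continuous-space spatial birth-death dynamics: I work in a bounded region $\subregion \subset \R^{\dimension}$, couple two copies of the dynamics started from different boundary conditions $\xi_1, \xi_2$ on $\R^{\dimension} \setminus \subregion$, track the disagreement process between the two copies, and show that under the hypothesis $\activity \eulerE^{\localStability} \weakTempered_{\potential} < 1$ this process is stochastically dominated by a subcritical branching process. Combined with the bounded range assumption, the influence of boundary disagreements decays geometrically with distance into the interior, which gives a weak spatial mixing estimate, from which uniqueness of the DLR measure follows by standard arguments.

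For the dynamics, I take birth rate $\activity \eulerE^{-E(y, \eta \cup \xi)}$ at $y \in \subregion$, where $E(y, \zeta) \coloneqq \hamiltonian(\zeta \cup y) - \hamiltonian(\zeta) \ge -\localStability$ by local stability, and unit death rate per point; then $\gibbs_{\subregion, \xi}$ is the unique invariant measure. To obtain a grand coupling, I rescale: propose births by a common Poisson process of rate $\activity \eulerE^{\localStability}$ per unit volume with iid uniform marks, and accept a proposal at $y$ in copy $i$ whenever the mark is at most $p_i(y) \coloneqq \eulerE^{-\localStability - E(y, \eta_i \cup \xi_i)} \in [0,1]$; death clocks are shared at common points and independent at disagreements. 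Writing $D_t \coloneqq (\eta_1(t) \cup \xi_1) \symdiff (\eta_2(t) \cup \xi_2)$, each disagreement located in $\subregion$ dies at rate $1$, and using the elementary bound $\absolute{\eulerE^{-a} - \eulerE^{-b}} \le \eulerE^{-\min(a, b)}(1 - \eulerE^{-\absolute{a - b}})$ together with local stability and the sub-additivity $1 - \eulerE^{-\sum u_i} \le \sum (1 - \eulerE^{-u_i})$, the rate at which new disagreements are created inside $\subregion$ is bounded by
\[
    \activity \eulerE^{\localStability} \int_{\subregion} \absolute{p_1(y) - p_2(y)} \diff y \le \activity \eulerE^{\localStability} \sum_{x \in D_t} \int_{\R^{\dimension}} (1 - \eulerE^{-\absolute{\potential(x, y)}}) \diff y \le \activity \eulerE^{\localStability} \weakTempered_{\potential} \size{D_t}.
\]
Hence $\size{D_t \cap \subregion}$ is dominated by a linear birth-death process with per-particle birth rate $\activity \eulerE^{\localStability} \weakTempered_{\potential}$ and death rate $1$, which is subcritical in precisely the claimed regime.

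The spatial propagation estimate now leverages bounded range: a newly created disagreement at $y$ must lie within distance $\range$ of some existing disagreement, so the genealogical tree rooted at a single boundary seed is contained in the ball of radius $k \range$ after $k$ generations, with expected generation size $(\activity \eulerE^{\localStability} \weakTempered_{\potential})^k$. For $\subregion_0 \subset \subregion$ with $\mathrm{dist}(\subregion_0, \subregion^c) \ge k \range$, the probability that any descendant of a boundary seed ever visits $\subregion_0$ decays geometrically in $k$; summing over boundary seeds, weighted by a Ruelle-type polynomial density bound valid on tempered configurations, yields
\[
    \projDtv{\gibbs_{\subregion, \xi_1}}{\gibbs_{\subregion, \xi_2}}{\subregion_0} \xrightarrow[\mathrm{dist}(\subregion_0, \subregion^c) \to \infty]{} 0
\]
uniformly in $\subregion$ and in admissible boundaries. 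Via the DLR equations, this weak spatial mixing forces any two infinite-volume Gibbs measures to coincide on every bounded cylinder event and hence to coincide as measures.

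The principal obstacle I expect is passing from the finite-time, finite-boundary coupling analysis to the stationary regime with an arbitrary locally finite boundary: I must extend the coupling to $t = \infty$ (via a Loynes-type backward construction or uniform-in-time bounds on $\size{D_t \cap \subregion_0}$) and control the sum over boundary seeds, both of which require typical boundary configurations under any DLR measure to be sufficiently tame. This tameness usually follows from local stability combined with classical Georgii--Ruelle estimates, but the quantitative bookkeeping is nontrivial. A secondary technical point is the rigorous construction of the spatial birth-death dynamics and the grand coupling under an infinite locally finite boundary, for which the bounded range assumption is essential.
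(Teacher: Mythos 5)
Your high-level strategy matches the paper's: construct finite-volume spatial birth-death dynamics reversible with respect to $\gibbs_{\subregion \mid \xi}$, couple two copies with different boundary conditions, and use the contraction estimate $\activity \eulerE^{\localStability} \weakTempered_{\potential} < 1$ together with bounded range to prove a weak spatial mixing estimate, then feed that into the DLR equations. Your displayed inequality is exactly the paper's key drift bound (Claim~\ref{claim:drift} in the proof of Lemma~\ref{lemma:jump_process_mixing}), obtained by path-coupling along single-site modifications, and your ``common Poisson proposal plus accept/reject'' coupling is equivalent to the paper's identity coupling $D_1,D_2,D_\cap,B_1,B_2,B_\cap$ of Lemma~\ref{lemma:identity_coupling}. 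The bounded-range propagation via the genealogy of disagreements is also the right intuition and corresponds to the paper's path-in-a-grid argument (Lemma~\ref{lemma:percolation}, via Lemma~\ref{lemma:separator_set} and Lemma~\ref{lemma:ordered_stopping_times}).

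The genuine gap is exactly the ``principal obstacle'' you flag: you plan to compare the two \emph{stationary} measures by pushing the coupling to $t = \infty$ (a Loynes-type backward construction), which then forces you to control the spatial density of ``boundary seeds'' via Ruelle-type estimates on tempered configurations. The paper avoids this entirely by proving a \emph{rapid mixing} bound that is uniform in the volume: from the same contraction (via Lemma~\ref{lemma:mixing}) one gets $\dtv{P_t(\eta,\cdot)}{\gibbs_{\subregion\mid\xi}} \le \eulerE^{-\delta t}\bigl(\activity \eulerE^{\localStability}\volume(\subregion) + \eta(\subregion)\bigr)$ with $\delta>0$ independent of $\subregion,\eta,\xi$. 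Then one starts both coupled chains from the \emph{empty} configuration and picks a finite time $t$ in the window where (i) $t$ is large enough that both chains are $\varepsilon$-close to their respective stationary measures, and (ii) $t$ is small enough that disagreements seeded near $\partial\subregion$ (the only source of differences, since both start empty) have not had time to percolate to the observation window $\subregion_0$. The triangle inequality on projected TV then gives the spatial mixing estimate with no need for infinite-time couplings, no backward construction, and no density bounds on boundary configurations. That existence of a valid $t$-window (i.e., that the mixing time grows only logarithmically while the percolation time grows linearly in the distance to the boundary) is precisely where the subcriticality $\activity \eulerE^{\localStability}\weakTempered_\potential < 1$ enters twice, once for $\delta > 0$ and once for the Poisson tail bound in the percolation lemma. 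Without this finite-time device, I do not see how to make your Loynes/branching-tree bookkeeping uniform over arbitrary feasible boundary conditions in a way that closes the argument, so this step is a real missing idea rather than a routine technicality.

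One smaller point: the claim that ``the probability that any descendant of a boundary seed ever visits $\subregion_0$ decays geometrically in $k$, with expected generation size $(\activity\eulerE^{\localStability}\weakTempered_\potential)^k$'' treats the disagreement process as a Galton--Watson tree, but it is a linear birth-death process with deaths. The paper's Lemma~\ref{lemma:ordered_stopping_times} sidesteps the branching bookkeeping by directly bounding the probability that a chain of $\ell$ cell-hitting events occurs before time $t$ via a Poisson tail at rate $\activity\eulerE^{\localStability}\range^{\dimension}$ per cell, which is both cleaner and what one actually needs for the finite-time argument; it does not even use subcriticality of the branching rate, only boundedness.
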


The formal version of the above theorem is given in \Cref{thm:uniqueness} and \Cref{cor:uniqueness}.
We defer a detailed comparison between our activity bound and the existing literature to \Cref{sec:discussion}, where we compare it to various results that prove absence of a phase transition (in the sense of DLR or analytically) in a similar setting.
However, we remark that, for bounded-range locally stable potentials, our bound provides an improvement by a factor of $\eulerE \tempered_{\potential}/\weakTempered_{\potential} \ge \eulerE$ over the classical Penrose--Ruelle bound \cite{ruelle1963correlation,penrose1963convergence}, which implies uniqueness via contraction of the Kirkwood--Salsburg equations, and which constituted the best known bound in this setting for more than 50 years.
We proceed with discussing how we obtain our result.

In the setting of our main theorem, existence of the infinite-volume Gibbs point process follows from known results (see \Cref{thm:existence}), and our main contribution is to prove that the infinite-volume Gibbs measure is unique.
Our argument is purely probabilistic and inspired by the work of Dyer, Sinclair, Vigoda and Weitz \cite{dyer2004mixing} for discrete hard-core gasses on a lattice.
The main idea is to construct a Markov process (in our case a spatial birth-death process as introduced by Preston \cite{preston1975spatial}) for every finite-volume region $\subregion$ such that the process is reversible (hence stationary) with respect to the finite-volume Gibbs measure on $\subregion$.
We then establish two properties for this family of Markov processes.

Firstly, we prove a \emph{rapid mixing} property, meaning that these processes converge rapidly to their stationary distribution in total variation distance.
We show this rapid mixing result for all activities $\activity < (\eulerE^{\localStability} \weakTempered_{\potential})^{-1}$ by arguing that the local stability and weak temperedness assumptions imply contraction of a suitably constructed coupling.
This step is fairly robust with respect to the underlying space (it holds on arbitrary complete separable metric spaces) and does not rely on the bounded range of the potential.

Secondly, we prove \emph{slow disagreement percolation}, which means that, for each of these Markov processes, local disagreements between initial configurations spread slowly.
For this step, we use the local stability along with the bounded range assumption and the fact that Euclidean space ``grows slowly''.

Combining both properties above shows that, under the conditions of the main theorem, finite-volume Gibbs point processes satisfy a \emph{spatial mixing} property: different parts of a random point configuration become asymptotically independent as the distance increases.
We use this observation to show that any pair of point processes satisfying the DLR formalism must have identical finite-volume projections. 
By a standard result \cite[Corollary 9.1.IX]{daley2007introduction}, every point process is uniquely characterized by its finite-volume projections, and hence there is at most one infinite-volume Gibbs measure.  

The proof technique we sketched above was previously applied in the continuous setting by Helmuth, Perkins and Petti \cite{helmuth2022correlation}, who used a discrete-time Markov chain to study the special case of uniqueness of the infinite-volume hard-sphere model.
Our work may be seen as a natural extension to more general potentials.
Moreover, spatial birth-death processes have been used before to study different aspects of Gibbs point processes \cite{bertini2002spectral,schuhmacher2014gibbs,kondratiev2005glauber,kondratiev2013spectral,fernandez2016stability}, sometimes under different name such as \emph{continuum Glauber dynamics}. 
In particular the generator and its spectral gap have been studied in both the finite-volume \cite{bertini2002spectral} and infinite-volume setting \cite{kondratiev2005glauber,kondratiev2013spectral}, and the results have been used to prove absence of a phase transition for various classes of models \cite{kondratiev2013spectral}.
However none of these results imply or are implied by ours. 
Moreover, as our result is based on a simple coupling argument, most of the theory on spatial birth-death processes that we require holds in the setting of jump processes on general state spaces as presented in the classical texts of Feller \cite{feller1991introduction} and Blumenthal and Getoor \cite{BG}.
This allows us to keep the presentation of our result and our proofs surprisingly elementary.

\subsection{Outline of the paper}
The rest of the paper is structured as follows. 
In \Cref{sec:gpp} we formally introduce finite- and infinite-volume Gibbs point processes along with several useful properties.
In particular, we present a sufficient condition for uniqueness of infinite-volume Gibbs measures, which essentially constitutes a spatial mixing property.
As we aim to keep most of our discussions as general as possible, we introduce point processes in the setting of complete separable metric spaces using the notion of random counting measures.
However, switching between this perspective and the intuition of random point sets is straightforward.
In \Cref{sec:jump_process} we give a short introduction to jump processes, which contain the spatial birth-death processes we aim to use. 
We further prove various lemmas that will allow to bound the speed of convergence to the stationary distribution and to obtain tail bounds on certain hitting times, the latter of which will be useful for the disagreement percolation.
In \Cref{sec:jump_process_for_gpp} we then construct the family of spatial birth-death processes that we study and prove the desired rapid mixing and slow disagreement percolation.
Finally, in \Cref{sec:uniqueness}, we prove our main uniqueness result that we sketched above.
We end our paper by discussing our result and compare our bound with the existing literature in \Cref{sec:discussion}.

\section{Gibbs Point Processes} \label{sec:gpp}
\subsection{Spaces and basic notation}
Let $(\groundspace, \dist)$ be a complete separable metric space.
We write $\borel$ for the Borel $\sigma$-field on $\groundspace$ and we assume that $(\groundspace, \borel)$ is equipped with a locally finite reference measure $\volume$ (i.e., $\volume(B) < \infty$ for every bounded set $B \in \borel$).
We write $\boundedBorel$ for the set of bounded Borel sets in $\borel$, and, for $\subregion \in \borel$, we write $\borel_{\subregion}$ for the trace of $\subregion$ in $\borel$.

We denote by $\countingMeasures$ the space of locally-finite counting measures on $(\groundspace, \borel)$, where we write $\mathbf{0} \in \countingMeasures$ for the constant $0$ measure.
We equip $\countingMeasures$ with the $\sigma$-field $\countingAlgebra$, generated by the set of functions $\{\pointcount_\subregion: \countingMeasures \to \N_0 \cup \{\infty\}, \eta \mapsto \eta(\subregion) \mid \subregion \in \borel\}$.
Further, we write $\finiteCountingMeasures$ for the set of finite counting measures on $(\groundspace, \borel)$ and equip it with $\finiteCountingAlgebra$, the trace of $\finiteCountingMeasures$ in $\countingAlgebra$.
For $\eta \in \countingMeasures$ and $\subregion \in \borel$, we write $\eta_{\subregion}$ for the unique counting measure with $\eta_{\subregion}(\Delta) = \eta(\Delta \cap \subregion)$ for every $\Delta \in \borel$.
We denote by $\countingMeasures_{\subregion} \coloneqq \{\eta \in \countingMeasures \mid \eta(\groundspace \setminus \subregion) = 0\}$ the image of $\countingMeasures$ under $\eta \mapsto \eta_{\subregion}$ and write $\countingAlgebra_{\subregion}$ for the trace of $\countingMeasures_{\subregion}$ in $\countingAlgebra$.

\subsection{Integration and factorial measures}
In the above setting, it is well known that every $\eta \in \countingMeasures$ can be written as a sum of Dirac measures $\eta = \sum_{i \in I} \delta_{x_i}$, where  $(x_i)_{i \in I}$ is a sequence of points in $\groundspace$, and the index set is either $I = \{1, \dots, \eta(\groundspace)\}$ if $\eta \in \finiteCountingMeasures$ or $I = \N$ if $\eta \in \countingMeasures \setminus \finiteCountingMeasures$ (see \cite[Proposition 9.1.III]{daley2007introduction}).
As a consequence, we have for every $\eta \in \countingMeasures$ and every measurable $f: \groundspace \to \extended{\R_{\ge 0}}$ that $\int f \diff \eta = \sum_{i \in I} f(x_i)$.
This identity extends to measurable functions $f: \groundspace \to \R \cup \{\infty\}$ provided that the negative or positive part of the integral is finite, which particularly holds if $\eta \in \finiteCountingMeasures$. 

Further, given a locally-finite counting measure $\eta = \sum_{i \in I} \delta_{x_i} \in \countingMeasures$ and a positive integer $k \in \N$, we write $\eta^{(k)}$ for the $k^{\text{th}}$ factorial measure of $\eta$, which is a counting measure on the product space $(\groundspace, \borel)^{\otimes k}$ given by $\eta^{(k)} = \sum_{i_1, \dots, i_k \in I}^{\neq} \delta_{(x_{i_1}, \dots, x_{i_k})}$, where the sum is taken over $k$ tuples of distinct indices from $I$.  
Analogously to before, we have $\int_{\groundspace^k} f \diff \eta^{(k)} = \sum_{i_1, \dots, i_k \in I}^{\neq} f(x_{i_1}, \dots, x_{i_{k}})$ for every measurable $f: \groundspace^k \to \R \cup \{\infty\}$ such that the negative or positive part of the integral is finite.

\subsection{Support set and operations on counting measures}
An alternative way of writing a locally finite counting measure $\eta \in \countingMeasures$ on a complete separable metric space involves the notion of the \emph{support set}
\[
    \support_{\eta} \coloneqq \{x \in \groundspace \mid \eta(x) > 0\} ,
\]
where we write $\eta(x)$ for $\eta(\{x\})$.
It can be shown that $\support_{\eta}$ is at most countable, and, for every $x \in \support_{\eta}$, the fact that $\eta$ is locally finite implies $\eta(x) < \infty$ (see again \cite[Proposition 9.1.III]{daley2007introduction}).
We can now write $\eta$ as
\[
    \eta = \sum_{x \in \support_{\eta}} \eta(x) \cdot \delta_x .
\]
Using this, we can define the following operations for pairs of locally finite counting measures $\eta, \xi \in \countingMeasures$:
\begin{align*}
    \eta \cap \xi &\coloneqq \sum_{x \in \support_{\eta} \cap \support_{\xi}} \min(\eta(x), \xi(x)) \cdot \delta_x \\
    \eta \cup \xi &\coloneqq \sum_{x \in \support_{\eta} \cup \support_{\xi}} \max(\eta(x), \xi(x)) \cdot \delta_x \\
    \eta \setminus \xi &\coloneqq \sum_{x \in \support_{\eta}} \max(0, \eta(x) - \xi(x)) \cdot \delta_x \\
    \eta \symdiff \xi &\coloneqq (\eta \setminus \xi) + (\xi \setminus \eta) . \\
\end{align*}
Thinking of locally finite counting measures as point (multi-)sets, these operation correspond directly to intersection, union, set difference and symmetric difference. 
It is easily checked that $\eta \cup \xi = (\eta \symdiff \xi) + (\eta \cap \xi)$ for all $\eta, \xi \in \countingMeasures$, which will turn out to be a particularly useful identity.

\subsection{Point processes}
We call a probability measure $P \in \probMeasures(\countingMeasures, \countingAlgebra)$ a \emph{point process} on $\subregion \in \borel$ if $P$ is only supported on $\countingMeasures_{\subregion}$ (i.e., $P(\countingMeasures \setminus \countingMeasures_{\subregion}) = 0$).
Note that every point process $P$ on $\subregion$ can be uniquely identified with a probability measure on $(\countingMeasures_{\subregion}, \countingAlgebra_{\subregion})$. 
Moreover, for $\subregion \in \borel$ and a point process $P$ on $\groundspace$, we write $P[\subregion]$ for the pushforward of $P$ under $\eta \mapsto \eta_{\subregion}$, viewed as a map from $(\countingMeasures, \countingAlgebra)$ to itself.
That is, for all $A \in \countingAlgebra$ we have
\[
    P[\subregion](A) = \int_{\countingMeasures} \ind{\eta_{\subregion} \in A} P(\diff \eta).
\]
Note that $P[\subregion]$ is a point process on $\subregion$. 
We call $P[\subregion]$ the projection of $P$ to $\subregion$.

In fact, point processes are fully characterized by their projections to finite-volume regions.
This is implied by the following even stronger fact, which states that two point processes are equal if they induce the same joint distributions of point counts on families of bounded sets. 
To this end, recall that we defined a measurable map $\pointcount_{\subregion}: \countingMeasures \to \N_0 \cup \{\infty\}, \eta \mapsto \eta(\subregion)$ for each $\subregion \in \borel$.
\begin{lemma}[{\cite[Corollary 9.1.IX]{daley2007introduction}}] \label{fact:equality}
    Let $P, Q \in \probMeasures(\countingMeasures, \countingAlgebra)$.
    Suppose for all $\ell \in \N$, $\subregion_1, \dots, \subregion_{\ell} \in \boundedBorel$ and $m_1, \dots, m_{\ell} \in \N_0$ it holds that 
    \[
        P(\pointcount_{\subregion_1} = m_1, \dots, \pointcount_{\subregion_{\ell}} = m_{\ell})
        = Q(\pointcount_{\subregion_1} = m_1, \dots, \pointcount_{\subregion_{\ell}} = m_{\ell}).
    \]
    Then $P=Q$.
\end{lemma}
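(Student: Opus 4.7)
The plan is to apply Dynkin's $\pi$-$\lambda$ theorem. Let $\mathcal{C}$ denote the collection of cylinder sets of the form $\{\eta \in \countingMeasures : \eta(\subregion_1) = m_1, \dots, \eta(\subregion_\ell) = m_\ell\}$ with $\ell \in \N$, each $\subregion_i \in \boundedBorel$, and each $m_i \in \N_0$. By hypothesis, $P$ and $Q$ agree on every element of $\mathcal{C}$. First I would verify that $\mathcal{C}$ is a $\pi$-system: intersecting two cylinder sets simply concatenates the list of $(\subregion_i, m_i)$-constraints, producing another set of the same form (possibly empty, which is also representable in this form by imposing two incompatible count constraints on the same bounded set). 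Next, the family $\mathcal{D} \coloneqq \{A \in \countingAlgebra : P(A) = Q(A)\}$ is a $\lambda$-system by the standard additivity/continuity arguments: it contains $\countingMeasures$, is closed under complements since $P$ and $Q$ are probability measures, and is closed under countable disjoint unions by countable additivity. Since $\mathcal{C} \subseteq \mathcal{D}$, Dynkin's theorem yields $\sigma(\mathcal{C}) \subseteq \mathcal{D}$.

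It then remains to show $\sigma(\mathcal{C}) = \countingAlgebra$. The inclusion $\sigma(\mathcal{C}) \subseteq \countingAlgebra$ is immediate from the definition of $\countingAlgebra$. For the reverse inclusion the task is to show that $\pointcount_{\subregion}$ is $\sigma(\mathcal{C})$-measurable for every $\subregion \in \borel$, not merely for bounded $\subregion$. Here I would invoke separability of $(\groundspace, \dist)$: fixing a countable dense subset $\{x_n\}_{n \in \N}$ and setting $B_N \coloneqq \bigcup_{n=1}^{N} \{y \in \groundspace : \dist(y, x_n) \le 1\}$ gives an increasing sequence of bounded Borel sets with $\bigcup_N B_N = \groundspace$. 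For every $\subregion \in \borel$ and every $N$, the set $\subregion \cap B_N$ is bounded Borel, hence $\pointcount_{\subregion \cap B_N}$ is $\sigma(\mathcal{C})$-measurable directly from the definition of $\mathcal{C}$. Since $\pointcount_{\subregion \cap B_N}(\eta) \uparrow \pointcount_{\subregion}(\eta)$ pointwise as $N \to \infty$, the limit $\pointcount_{\subregion}$ is also $\sigma(\mathcal{C})$-measurable, and $\sigma(\mathcal{C}) = \countingAlgebra$ follows. Combined with the preceding paragraph, this gives $P = Q$ on all of $\countingAlgebra$.

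There is no genuinely hard step here; the whole argument is a routine measure-theoretic reduction, really just a warm-up before the main probabilistic content of the paper. The only mild subtlety is the reduction from counts over arbitrary Borel sets to counts over bounded Borel sets, which is exactly what the separability of $\groundspace$ provides via the countable cover $(B_N)_{N \in \N}$.
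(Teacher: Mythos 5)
The paper does not actually prove this lemma; it is cited verbatim as Corollary~9.1.IX of Daley and Vere-Jones, so there is no in-paper argument to compare against. Your proof is a correct self-contained derivation and follows the standard route (the same one underlying the cited result): the cylinder sets over bounded Borel regions form a $\pi$-system, the agreement class of two probability measures is a $\lambda$-system, and Dynkin's theorem gives agreement on $\sigma(\mathcal{C})$. The only step requiring any care is checking $\sigma(\mathcal{C}) = \countingAlgebra$, since the paper defines $\countingAlgebra$ via $\pointcount_{\subregion}$ for \emph{all} $\subregion \in \borel$, not merely bounded ones; your reduction via an exhausting increasing sequence $(B_N)$ of bounded Borel sets, using separability to build the cover and local finiteness of counting measures to guarantee $\pointcount_{\subregion \cap B_N} \uparrow \pointcount_{\subregion}$, handles this correctly. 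No gaps.
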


\subsection{Finite-volume Gibbs point processes }
Let $\potential: \groundspace^2 \to \R \cup \{\infty\}$ be symmetric and measurable.
We call $\potential$ a \emph{(stable) pair potential} if there is a constant $\stability \in \R_{\ge 0}$ such that for all 
$\eta \in \finiteCountingMeasures$ it holds that
\[
    \hamiltonian(\eta) \coloneqq \frac{1}{2} \int_{\groundspace^2} \potential(x, y) \eta^{(2)} (\diff x, \diff y) \ge - \stability \cdot \eta(\groundspace) .
\]
We call every such $\stability$ a \emph{stability constant} for $\potential$, and we call $H: \finiteCountingMeasures \to \R \cup \{\infty\}$ the \emph{energy function} induced by $\potential$.
Note that such an energy function satisfies the following important properties:
\begin{enumerate}
    \item It holds that $H(\mathbf{0}) = 0$ (\emph{non-degenerate}).
    \item Suppose $\eta, \xi \in \finiteCountingMeasures$ are such that $\xi \le \eta$ and $H(\xi) = \infty$; then $H(\eta) = \infty$ (\emph{hereditary}).
\end{enumerate}

Given a pair potential $\potential$ and a bounded set $\subregion \in \borel$, we define a function $\hamiltonian_{\subregion}: \countingMeasures_{\subregion} \times \countingMeasures \to \R \cup \{\infty\}$ as 
\[
    \hamiltonian_{\subregion}(\eta \mid \xi) \coloneqq \hamiltonian(\eta) + \int_{\subregion} \int_{\groundspace \setminus \subregion} \potential(x, y) \xi(\diff y) \eta(\diff x) ,
\]
where we set $\hamiltonian_{\subregion}(\eta \mid \xi) = \infty$ whenever the latter integral fails to converge absolutely. 

Next, we introduce the grand-canonical partition function, which serves as the normalizing constant of the Gibbs point process.
For $\subregion$ as above, we define the \emph{grand-canonical partition function} on $\subregion$ with boundary condition $\xi \in \countingMeasures$ and activity $\activity \in \R_{\ge 0}$ by
\[
    \partitionFunction_{\subregion \mid \xi} (\activity) \coloneqq 1 + \sum_{n = 1}^{\infty} \frac{\activity^n}{n!} \int_{\subregion^n} \exp \left( - \hamiltonian_{\subregion}\Big(\sum_{i=1}^{n} \delta_{x_i} \Big| \xi\Big) \right) \volume^{n}(\diff \tuple{x}).
\]
Obviously, it holds that $\partitionFunction_{\subregion \mid \xi} (\activity) \ge 1$. 
If further $\xi$ is such that $\partitionFunction_{\subregion \mid \xi} (\activity) < \infty$, we define the \emph{finite-volume Gibbs point process} (or finite-volume Gibbs measure) on $\subregion$ with boundary condition $\xi$ at activity $\activity$ as a probability distribution $\gibbs_{\subregion \mid \xi} \in \probMeasures(\countingMeasures, \countingAlgebra)$ with
\[
    \gibbs_{\subregion \mid \xi}(A) \coloneqq\frac{\eulerE^{\volume(\subregion)}}{\partitionFunction_{\subregion \mid \xi} (\activity)} \int_{A}  \activity^{\eta(\subregion)} \eulerE^{-\hamiltonian_{\subregion}(\eta \mid \xi)} Q_{\subregion}(\diff \eta)
\]
for $A \in \countingAlgebra$, where $Q_{\subregion}$ is the Poisson point process with intensity measure $\volume(\cdot \cap \subregion)$.
Note that $\gibbs_{\subregion \mid \xi}$ is a point process on $\subregion$ since $Q_{\subregion}$ is only supported on $\countingMeasures_{\subregion}$.

We proceed by introducing some additional terminology related to pair potentials.
As sketched in the introduction, we say a pair potential $\potential$ is \emph{weakly tempered}\footnote{An alternative condition that is frequently considered is temperedness, which requires that $\tempered_{\potential} \coloneqq \sup_{x \in \groundspace} \int_{\groundspace} |1 - \eulerE^{- \potential(x, y)}| \volume(\diff y) < \infty$. We remark that the term 'weak' does not refer to weak temperedness being a weaker condition (as $\weakTempered_{\potential} < \infty$ if and only if $\tempered_{\potential} < \infty$), but to the fact that $\weakTempered_{\potential} \le \tempered_{\potential}$, where the inequality is strict if $\potential$ is tempered and as a non-trivial negative part.} if 
\[
    \weakTempered_{\potential} \coloneqq \sup_{x \in \groundspace} \int_{\groundspace} 1 - \eulerE^{- \absolute{\potential(x, y)}} \volume(\diff y) < \infty,
\]
and we call $\weakTempered_{\potential}$ the \emph{weak temperedness constant} of $\potential$.
Moreover, we say that $\potential$ has bounded range $\range \in \R_{\ge 0}$ if for all $x, y \in \groundspace$ with $\dist(x, y) \ge 
\range$ it holds that $\potential(x, y) = 0$.
Finally, we call $\potential$ \emph{locally stable} with constant $\localStability \in \R_{\ge 0}$ if, for all $\eta \in \finiteCountingMeasures$ with $\hamiltonian(\eta) < \infty$ and all $x \in \groundspace$, it holds that
\[
    \hamiltonian(\eta + \delta_x) - \hamiltonian(\eta) \ge - \localStability .
\]

\subsection{Infinite-volume Gibbs point process}
Given an activity $\activity \in \R_{\ge 0}$ and a pair potential $\potential$, we denote by $\gibbsMeasures(\activity, \potential) \subseteq \probMeasures(\countingMeasures, \countingAlgebra)$ the set of infinite-volume Gibbs point processes that are compatible with $\activity$ and $\potential$, which we characterize via the Dobrushin-Lanford-Ruelle formalism.

\begin{definition}[DLR formalism] \label{def:dlr}
    Given an activity $\activity$ and potential $\potential$, we say a point process $\gibbs \in \probMeasures(\countingMeasures, \countingAlgebra)$ satisfies the DLR formalism if for all $\subregion \in \boundedBorel$ the following holds:
    \begin{enumerate}
        \item for $\normalBoundary_{\subregion} = \{\xi \in \countingMeasures \mid \partitionFunction_{\subregion \mid \xi}(\activity) < \infty \}$ we have $\gibbs(\normalBoundary_{\subregion}) = 1$, and
        \item for all measurable $f: \countingMeasures \to \R_{\ge 0}$ we have
        \[
            \int_{\countingMeasures} f(\eta) \gibbs(\diff \eta) = \int_{\countingMeasures} \int_{\countingMeasures_{\subregion}} f(\eta + \xi_{\groundspace \setminus \subregion}) \gibbs_{\subregion \mid \xi}(\diff \eta) \gibbs(\diff \xi) .
        \]
    \end{enumerate}
    In this case, we call $\gibbs$ an \emph{infinite-volume Gibbs point process} (or infinite-volume Gibbs measure) compatible with $\activity$ and $\potential$.
    Further, we write $\gibbsMeasures(\activity, \potential)$ for the set of such infinite-volume Gibbs point processes.
\end{definition}

As discussed earlier, we will mostly focus on proving conditions for which $\size{\gibbsMeasures(\activity, \potential)} \le 1$ (i.e., uniqueness of infinite-volume Gibbs point processes).
However, at this point, we would like to point out that, in the setting studies in this paper, it also holds that $\gibbsMeasures(\activity, \potential) \neq \emptyset$ (i.e., there also exists an infinite-volume Gibbs point process).
This is formally given by the following standard result.
\begin{theorem}[{Existence, \cite[Theorem 5.6]{jansen2018gibbsian}}] \label{thm:existence}
    Suppose $\potential$ is locally stable and has bounded range, then for all $\activity \in \R_{\ge 0}$ it holds that $\gibbsMeasures(\activity, \potential) \neq \emptyset$. 
\end{theorem}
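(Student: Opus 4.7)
The plan is the classical compactness argument: realize an infinite-volume Gibbs measure as a weak cluster point of finite-volume Gibbs measures with empty boundary condition along an exhausting sequence of bounded regions. Fix an increasing sequence $\subregion_1 \subseteq \subregion_2 \subseteq \cdots$ of bounded Borel sets with $\bigcup_n \subregion_n = \R^{\dimension}$, and set $\gibbs_n \coloneqq \gibbs_{\subregion_n \mid \mathbf{0}}$. Local stability gives $\hamiltonian(\eta) \ge -\localStability \cdot \eta(\groundspace)$ for all $\eta \in \finiteCountingMeasures$, so
\[
\partitionFunction_{\subregion_n \mid \mathbf{0}}(\activity) \le 1 + \sum_{k=1}^{\infty} \frac{(\activity \eulerE^{\localStability})^k}{k!} \volume(\subregion_n)^k = \exp\bigl(\activity \eulerE^{\localStability} \volume(\subregion_n)\bigr) < \infty,
\]
which guarantees that each $\gibbs_n$ is a well-defined point process. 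We view each $\gibbs_n$ as a probability measure on $(\countingMeasures, \countingAlgebra)$ supported on $\countingMeasures_{\subregion_n}$.

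Next I would establish tightness via uniform first-moment bounds on point counts. The GNZ (Georgii--Nguyen--Zessin) identity, combined with local stability, implies that the one-point Papangelou conditional intensity of $\gibbs_n$ at $x$ is at most $\activity \eulerE^{\localStability}$. Consequently,
\[
\E_{\gibbs_n}[\pointcount_B] \le \activity \eulerE^{\localStability} \volume(B) \quad \text{for every } B \in \boundedBorel,
\]
uniformly in $n$. Applying Markov's inequality to a countable exhausting family of bounded sets, the laws of the vectors $(\pointcount_{B_1}, \dots, \pointcount_{B_\ell})$ are tight for every finite collection $B_1, \dots, B_\ell \in \boundedBorel$, which by a standard Prokhorov-type criterion for counting-measure valued random variables (together with the fact that $\countingAlgebra$ is generated by the maps $\pointcount_B$) yields tightness of $(\gibbs_n)_n$ on $(\countingMeasures, \countingAlgebra)$. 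Extract a subsequence converging weakly to some $\gibbs$.

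The final and most delicate step is to verify that the limit $\gibbs$ satisfies the DLR formalism. Here the bounded range assumption is essential: for any bounded $\subregion$, the specification kernel $\xi \mapsto \gibbs_{\subregion \mid \xi}$ depends on $\xi$ only through its restriction to the $\range$-neighborhood $\subregion^{\range}$, which is itself bounded. Combined with local stability (which provides uniform integrability of $\activity^{\eta(\subregion)} \eulerE^{-\hamiltonian_{\subregion}(\eta \mid \xi)}$ with respect to $Q_{\subregion}$, locally uniformly in $\xi$), one obtains that $\xi \mapsto \int f \diff \gibbs_{\subregion \mid \xi}$ is continuous in the local topology for bounded measurable $f$ that depend only on the configuration in a bounded set. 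Since for $n$ large enough that $\subregion_n \supseteq \subregion^{\range}$, the measure $\gibbs_n$ already satisfies the DLR condition restricted to $\subregion$ (from the construction of finite-volume Gibbs measures via conditional densities), passing to the weak limit along the chosen subsequence gives the DLR condition for $\gibbs$ with respect to $\subregion$. Since $\subregion$ was arbitrary, $\gibbs \in \gibbsMeasures(\activity, \potential)$.

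The main obstacle is the last step: establishing the continuity of the specification kernel with respect to the boundary condition in a form that survives weak convergence. The bounded range assumption reduces this to a continuity statement on a bounded region, where local stability together with dominated convergence (against a Poisson reference measure) supplies the required uniformity; without bounded range, controlling long-range boundary effects would require additional temperedness-type hypotheses beyond what we have assumed.
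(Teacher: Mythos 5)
The paper does not supply a proof of this theorem; it is cited verbatim as \cite[Theorem 5.6]{jansen2018gibbsian}, so there is no internal proof to compare against. Your sketch follows the classical compactness route (finite-volume measures with empty boundary, tightness, extraction of a weak cluster point, DLR verification), which is the standard way to attack such a statement. The partition-function bound and the tightness argument via the GNZ estimate $\E_{\gibbs_n}[\pointcount_B] \le \activity \eulerE^{\localStability} \volume(B)$ followed by Markov's inequality are both sound, although you should fix the Polish topology on $\countingMeasures$ (vague convergence of counting measures) explicitly, since tightness and weak convergence are meaningless without it.

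The genuine gap is in the last step, where you assert that $\xi \mapsto \int f \, \diff\gibbs_{\subregion \mid \xi}$ is continuous by bounded range plus dominated convergence. The potential is allowed to take the value $+\infty$ (the paper explicitly writes $\potential : \groundspace^2 \to \R \cup \{\infty\}$), so the unnormalized density $\eulerE^{-\hamiltonian_{\subregion}(\eta \mid \xi)}$ contains hard-core indicator terms that jump discontinuously when a point of $\xi$ sits exactly at distance $\range$ from $\subregion$. Along a vaguely convergent sequence $\xi_n \to \xi$ with such a boundary point, $\gibbs_{\subregion\mid\xi_n}$ need not converge to $\gibbs_{\subregion\mid\xi}$, so the specification is not Feller in the topology in which your tightness argument delivers convergence, and dominated convergence cannot be applied in the boundary variable as written. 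Closing the argument requires an additional observation — for instance, that this exceptional set of boundary conditions is null under the cluster point $\gibbs$, which can be argued from the uniform Papangelou-intensity bound together with the fact that spheres have Lebesgue measure zero — or, alternatively, abandoning Feller continuity in favor of an entropy-based existence argument. That additional observation is not a matter of filling in routine estimates; it is the substantive content that makes existence theorems of this generality nontrivial and is the main thing missing from the outline.
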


We remark that this is not the most general setting in which existence of infinite-volume Gibbs measures can be proven, but it suffices for our purposes.
The interested reader should refer to \cite[Section 2]{dereudre2019introduction} and \cite[Section 5]{jansen2018gibbsian} for a better overview.

\subsection{Useful properties.}
We proceed by proving a list of properties of Gibbs point processes that will be useful throughout the rest of the paper.
We start by constructing the space of \emph{feasible counting measures}
\begin{align}
    \feasibleCountingMeasures \coloneqq \{\eta \in \countingMeasures \mid \text{ for all $\xi \in \finiteCountingMeasures$ with $\xi \le \eta$ it holds that $\hamiltonian(\xi) < \infty$}\} .
    \label{eq:feasible}
\end{align}
Feasible counting measures will play an important role in this work as boundary conditions.
In particular, in light of the DLR formalism in \Cref{def:dlr}, the following lemma allows us to restrict to boundary conditions from $\feasibleCountingMeasures$ when characterizing infinite-volume Gibbs measures.
\begin{lemma} \label{lemma:feasible}
    For every pair potential $\potential$ it holds that $\feasibleCountingMeasures \in \countingAlgebra$. 
    Moreover, for every activity $\activity \in \R_{\ge 0}$ and all $\gibbs \in \gibbsMeasures(\activity, \potential)$ it holds that $\gibbs(\feasibleCountingMeasures) = 1$.
\end{lemma}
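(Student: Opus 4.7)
The plan is to reduce the problem to a countable intersection and then apply the DLR formalism. First I would fix a countable exhaustion of $\groundspace$ by bounded Borel sets $\subregion_1 \subset \subregion_2 \subset \dotsb$ with $\bigcup_n \subregion_n = \groundspace$; such an exhaustion exists because $(\groundspace, \dist)$ is separable, so we may enumerate a countable cover of $\groundspace$ by bounded open balls and let $\subregion_n$ be the union of the first $n$. The first key observation is that, using the hereditary property of $\hamiltonian$ noted just before \Cref{lemma:feasible},
\[
    \feasibleCountingMeasures = \bigcap_{n \in \N} \{\eta \in \countingMeasures \mid \hamiltonian(\eta_{\subregion_n}) < \infty\}.
\]
For $\subseteq$, each $\eta_{\subregion_n}$ is a finite counting measure dominated by $\eta$. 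For $\supseteq$, any finite $\xi \le \eta$ has finite support contained in some $\subregion_n$, so $\xi \le \eta_{\subregion_n}$, and the hereditary property then propagates finiteness of energy from $\eta_{\subregion_n}$ to $\xi$.

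Next I would verify that $\eta \mapsto \hamiltonian(\eta_{\subregion_n})$ is measurable for each $n$, which together with the display above yields $\feasibleCountingMeasures \in \countingAlgebra$. The restriction $\eta \mapsto \eta_{\subregion_n}$ is measurable by definition of $\countingAlgebra$, the factorial measure $\xi \mapsto \xi^{(2)}$ is a measurable operation on $\finiteCountingMeasures$, and $\xi \mapsto \hamiltonian(\xi) = \tfrac{1}{2} \int \potential \diff \xi^{(2)}$ is measurable by splitting $\potential$ into its positive and negative parts and integrating each separately. Note that stability forces $\int \potential^- \diff \xi^{(2)} < \infty$ for every finite $\xi$, so no $\infty - \infty$ ambiguity arises in the definition of $\hamiltonian(\xi)$.

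For the probability statement I would show $\gibbs(\hamiltonian(\eta_\subregion) = \infty) = 0$ for every fixed $\subregion \in \boundedBorel$ and then take a countable intersection over $\subregion_n$. Fix $\subregion$ and apply the second item of \Cref{def:dlr} to $f(\eta) = \ind{\hamiltonian(\eta_{\subregion}) = \infty}$. Since $\eta \in \countingMeasures_{\subregion}$ under $\gibbs_{\subregion \mid \xi}$ we have $(\eta + \xi_{\groundspace \setminus \subregion})_\subregion = \eta$, so $f(\eta + \xi_{\groundspace \setminus \subregion}) = \ind{\hamiltonian(\eta) = \infty}$. For any $\xi \in \normalBoundary_\subregion$, the density of $\gibbs_{\subregion \mid \xi}$ with respect to $Q_\subregion$ is proportional to $\activity^{\eta(\subregion)} \eulerE^{-\hamiltonian_\subregion(\eta \mid \xi)}$, and on the event $\{\hamiltonian(\eta) = \infty\}$ the definition of $\hamiltonian_\subregion$ forces $\hamiltonian_\subregion(\eta \mid \xi) = \infty$, so this density vanishes. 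Hence $\gibbs_{\subregion \mid \xi}(\hamiltonian(\eta) = \infty) = 0$ for every $\xi \in \normalBoundary_\subregion$, and since $\gibbs(\normalBoundary_\subregion) = 1$ by the first item of \Cref{def:dlr}, we conclude $\gibbs(\hamiltonian(\eta_\subregion) = \infty) = 0$. A countable intersection over $n$ completes the proof.

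The main point requiring care is the measurability step, where one must reduce to finite counting measures (via the exhaustion by $\subregion_n$) before integrating a potentially infinite-valued $\potential$ against a factorial measure, and then exploit stability to exclude the $\infty - \infty$ ambiguity. The DLR calculation itself collapses to the single observation that the finite-volume Gibbs density vanishes on infinite-energy configurations.
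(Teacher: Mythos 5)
Your proof is correct and follows essentially the same route as the paper's: both decompose $\countingMeasures \setminus \feasibleCountingMeasures$ into a countable union over an exhaustion $(\subregion_n)$ of events $\{\hamiltonian(\eta_{\subregion_n}) = \infty\}$ using heredity, and both observe that $\hamiltonian(\eta)=\infty$ forces $\hamiltonian_{\subregion_n}(\eta\mid\xi)=\infty$ so that $\gibbs_{\subregion_n\mid\xi}$ gives zero mass, after which the DLR consistency kills $\gibbs$ of each event. The only difference is that you spell out the measurability of $\eta\mapsto\hamiltonian(\eta_{\subregion_n})$, which the paper simply asserts; your observation that the negative part causes no $\infty-\infty$ ambiguity is right (though it is automatic from $\potential$ taking values in $\R\cup\{\infty\}$ and $\eta_{\subregion_n}$ being finite, independently of stability).
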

\begin{proof}
    Fix some $x \in \groundspace$ and, for $n \in \N$, let $\subregion_n = \{y \in \groundspace \mid \dist(x, y) < n\}$.
    If $\eta \in \countingMeasures \setminus \feasibleCountingMeasures$ then, by definition, there is some $\xi \in \finiteCountingMeasures$ with $\xi \le \eta$ and $\hamiltonian(\xi) = \infty$.
    By the fact that $\hamiltonian$ is hereditary and $\xi$ is finite, we can find some $n \in \N$ such that $\hamiltonian(\eta_{\subregion_n}) = \infty$.
    Moreover, if such an $n \in \N$ exists, then obviously $\eta \in \countingMeasures \setminus \feasibleCountingMeasures$.
    By setting $A_n = \{ \eta \in \countingMeasures \mid \hamiltonian(\eta_{\subregion_n}) = \infty\}$ and noting that $A_n \in \countingAlgebra$, we have $\countingMeasures \setminus \feasibleCountingMeasures = \bigcup_{n \in \N} A_n  \in \countingAlgebra$.
    
    Next, suppose that $\gibbs \in \gibbsMeasures(\activity, \potential)$.
    By the union bound, it suffices to show that $\gibbs(A_n) = 0$ for every $n \in \N$.
    Observe that for all $\xi \in \countingMeasures$ and all $\eta \in A_n$ it holds that $\hamiltonian_{\subregion_n}(\eta \mid \xi) = \infty$.
    Therefore, $\gibbs_{\subregion_n \mid \xi} (A_n) = 0$ for all $\xi$ such that $\gibbs_{\subregion_n \mid \xi}$ is well-defined.
    But then, by the DLR formalism in \Cref{def:dlr}, it follows that $\gibbs(A_n) = 0$, which concludes the proof.
\end{proof}

As we will see shortly, configurations from $\feasibleCountingMeasures$ are feasible in the sense that, for every $\xi \in \feasibleCountingMeasures$, the finite-volume Gibbs measure $\gibbs_{\subregion \mid \xi}$ is always well defined (i.e., the partition function converges).
Before we prove this, it will be convenient to introduce the following notion of \emph{influence} of a configuration $\eta \in \countingMeasures$ on a single point $x \in \groundspace$ by setting
\[
    \influence(x, \eta) \coloneqq \int_{\groundspace} \potential(x, y) \eta(\diff y)
\]
whenever the integral converges absolutely and $\influence(x, \eta) \coloneqq \infty$ otherwise.
This definition of influence is particularly useful due to the following decompositions of $\hamiltonian_{\subregion}$.
\begin{lemma} \label{lemma:energy_decomposition}
    For all $\subregion \in \boundedBorel$, $\eta \in \countingMeasures_{\subregion}$ and $\xi \in \countingMeasures$ it holds that
    \begin{align}
        \hamiltonian_{\subregion}(\eta \mid \xi) = \frac{1}{2} \int_{\subregion} \influence(x, \eta - \delta_x) \eta(\diff x) + \int_{\subregion} \influence(x, \xi_{\subregion^c}) \eta(\diff x). \label{lemma:energy_decomposition:full} 
    \end{align}
    Moreover, for all $x \in \subregion$ it holds that
    \begin{align}
        \hamiltonian_{\subregion}(\eta + \delta_x \mid \xi) = \hamiltonian_{\subregion}(\eta \mid \xi) + \influence(x, \eta + \xi_{\subregion^c}). \label{lemma:energy_decomposition:point}
    \end{align}
\end{lemma}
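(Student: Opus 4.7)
Both identities are unpackings of the definitions of $\hamiltonian$, $\hamiltonian_\subregion(\cdot\mid\cdot)$ and $\influence$, so the strategy is direct computation via the Dirac-sum representation of counting measures. Throughout I would fix $\eta = \sum_{i\in I} \delta_{x_i}$ with $I = \{1,\dots,\eta(\groundspace)\}$, which is finite because $\eta \in \countingMeasures_{\subregion}$ and $\subregion$ is bounded, so all absolute-convergence caveats around $\int|\potential|\diff\eta^{(2)}$ etc.\ are automatic (any troublesome pair just renders both sides $+\infty$ and the identities become $\infty=\infty$, but I would briefly note this case separately to keep the use of $\influence$ legitimate).

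\textbf{Proof of \eqref{lemma:energy_decomposition:full}.} First I would expand $\hamiltonian(\eta)$ using the factorial-measure formula from the subsection on integration:
\[
\hamiltonian(\eta) \;=\; \tfrac{1}{2}\int_{\groundspace^2}\potential(x,y)\,\eta^{(2)}(\diff x,\diff y) \;=\; \tfrac{1}{2}\sum_{i,j\in I}^{\neq}\potential(x_i,x_j).
\]
Then I would rewrite the double sum as $\sum_{i\in I}\sum_{j\in I,\, j\neq i}\potential(x_i,x_j)$ and recognise the inner sum as $\int_{\groundspace}\potential(x_i,y)\,(\eta-\delta_{x_i})(\diff y) = \influence(x_i,\eta-\delta_{x_i})$, where removing one Dirac at $x_i$ exactly excises the $i$-th term even when $\eta$ has multiple points at the same location. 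Repackaging $\sum_{i\in I}$ as $\int_{\groundspace}(\cdots)\eta(\diff x)$ and restricting to $\subregion$ (valid because $\eta$ is supported in $\subregion$) gives
\[
\hamiltonian(\eta) \;=\; \tfrac{1}{2}\int_{\subregion}\influence(x,\eta-\delta_x)\,\eta(\diff x).
\]
For the boundary term in the definition of $\hamiltonian_\subregion(\eta\mid\xi)$, I would observe that for $x\in\subregion$ the measure $\xi$ restricted to $\groundspace\setminus\subregion$ is exactly $\xi_{\subregion^c}$, so
\[
\int_{\subregion}\int_{\groundspace\setminus\subregion}\potential(x,y)\,\xi(\diff y)\,\eta(\diff x)
\;=\;\int_{\subregion}\influence(x,\xi_{\subregion^c})\,\eta(\diff x).
\]
Summing the two pieces yields \eqref{lemma:energy_decomposition:full}.

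\textbf{Proof of \eqref{lemma:energy_decomposition:point}.} The clean way is to plug $\eta+\delta_x$ (for $x\in\subregion$) into \eqref{lemma:energy_decomposition:full} and subtract \eqref{lemma:energy_decomposition:full} applied to $\eta$. Concretely, I would expand $(\eta+\delta_x)^{(2)}$ and use symmetry of $\potential$: the new pairs are $(x,x_i)$ and $(x_i,x)$ for $i\in I$, contributing $\frac{1}{2}\cdot 2\sum_i\potential(x,x_i)=\influence(x,\eta)$ to $\hamiltonian(\eta+\delta_x)-\hamiltonian(\eta)$. The boundary term picks up an extra $\int_{\subregion^c}\potential(x,y)\,\xi(\diff y)=\influence(x,\xi_{\subregion^c})$. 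Adding these and using linearity $\influence(x,\eta)+\influence(x,\xi_{\subregion^c})=\influence(x,\eta+\xi_{\subregion^c})$ gives exactly \eqref{lemma:energy_decomposition:point}.

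\textbf{Where (minor) care is needed.} There is no real obstacle, but the one delicate step is justifying that the reindexing $\sum_{i\neq j}=\sum_i\sum_{j\neq i}$ corresponds precisely to integrating against $(\eta-\delta_x)$ when $\eta$ has repeated points; this follows from the Dirac-sum expansion in the preceding subsection, which handles multiplicities automatically. One should also check that the cases where $\hamiltonian_\subregion(\eta\mid\xi)=\infty$ or $\influence(x,\xi_{\subregion^c})=\infty$ are consistent on both sides, which follows because any non-absolutely-convergent integral in the definition of $\hamiltonian_\subregion$ is exactly mirrored by the corresponding $\influence$ being $+\infty$ by convention.
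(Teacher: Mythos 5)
Your proposal is correct and takes essentially the same approach as the paper: expand $\hamiltonian(\eta)$ via the factorial-measure/Dirac-sum representation to get $\frac{1}{2}\int_{\subregion}\influence(x,\eta-\delta_x)\,\eta(\diff x)$, identify the boundary integral with $\int_{\subregion}\influence(x,\xi_{\subregion^c})\,\eta(\diff x)$, then for the second identity compute $\hamiltonian(\eta+\delta_x)=\hamiltonian(\eta)+\influence(x,\eta)$ and use additivity of $\influence$, treating the non-absolutely-convergent cases separately so that both sides are $+\infty$ simultaneously — all of which matches the paper's argument.
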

\begin{proof}
    To prove \eqref{lemma:energy_decomposition:full}, we start by observing that $\eta \in \countingMeasures_{\subregion} \subseteq \finiteCountingMeasures$.
    First, note that
    \[
        \hamiltonian(\eta) = \frac{1}{2} \int_{\subregion} \influence(x, \eta - \delta_x) \eta(\diff x) ,
    \]
    which can be seen by writing both sides as finite sum of terms in $(\infty, \infty]$.
    Moreover, note that $\int_{\subregion} \int_{\subregion^c} \absolute{\potential(x, y)} \xi(\diff y) \eta(\diff x) < \infty$ if and only if $\int_{\subregion^c} \absolute{\potential(x, y)} \xi(\diff y) < \infty$ for every $x \in \support_{\eta}$.
    If this is the case, then 
    \[
        \int_{\subregion} \int_{\subregion^c} \potential(x, y) \xi(\diff y) \eta(\diff x) = \int_{\subregion} \influence(x, \xi_{\subregion^c}) \eta(\diff x) .
    \]
    Otherwise, we have $\int_{\subregion} \influence(x, \xi_{\subregion^c}) \eta(\diff x) = \infty$ and $\hamiltonian_{\subregion}(\eta \mid \xi) = \infty$ by definition, proving identity \eqref{lemma:energy_decomposition:full}.

    For \eqref{lemma:energy_decomposition:point}, we proceed similarly.
    By rewriting $\hamiltonian(\eta + \delta_x)$ and $\influence(x, \eta)$ as a sum of finitely many terms in $(\infty, \infty]$, observing that
    \[
        \hamiltonian(\eta + \delta_x) = \hamiltonian(\eta) + \influence(x, \eta) . 
    \]
    Further, if $\int_{\subregion} \int_{\subregion^c} \absolute{\potential(y, z)} \xi(\diff z) (\eta + \delta_x)(\diff y) < \infty$ then
    \begin{align*}
        \int_{\subregion} \int_{\subregion^c} \potential(y, z) \xi(\diff z) (\eta + \delta_x)(\diff y) 
        &= \int_{\subregion} \int_{\subregion^c} \potential(y, z) \xi(\diff z) \eta(\diff y) + \int_{\subregion^c} \potential(x, z) \xi(\diff z)\\ 
        &= \int_{\subregion} \int_{\subregion^c} \potential(y, z) \xi(\diff z) \eta(\diff y) + \influence(x, \xi_{\subregion^c})
    \end{align*}
    with absolutely converging integrals.
    Hence, in this case we get
    \begin{align*}
        \hamiltonian_{\subregion}(\eta + \delta_x \mid \xi) 
        &= \hamiltonian(\eta) + \influence(x, \eta) + \int_{\subregion} \int_{\subregion^c} \potential(y, z) \xi(\diff z) \eta(\diff y) + \influence(x, \xi_{\subregion^c})\\
        &= \hamiltonian_{\subregion}(\eta \mid \xi) + \influence(x, \eta + \xi_{\subregion^c}) .
    \end{align*}
    Otherwise, if $\int_{\subregion} \int_{\subregion^c} \absolute{\potential(y, z)} \xi(\diff z) (\eta + \delta_x)(\diff y) = \infty$ then, by definition, $\hamiltonian_{\subregion}(\eta + \delta_x \mid \xi) = \infty$.
    Moreover, it must hold that $\int_{\subregion} \int_{\subregion^c} \absolute{\potential(y, z)} \xi(\diff z) \eta(\diff y) = \infty$, which implies $\hamiltonian_{\subregion}(\eta \mid \xi)$, or $\int_{\subregion^c} \absolute{\potential(x, z)} \xi(\diff z) = \infty$, which implies $\influence(x, \eta + \xi_{\subregion^c}) = \infty$.
    In both cases, identity \eqref{lemma:energy_decomposition:point} holds.
\end{proof}
We use \eqref{lemma:energy_decomposition:full} in conjunction with the following lower bound on $\influence(x, \eta)$ to upper-bound the partition function under feasible boundary conditions.
\begin{lemma} \label{lemma:influence_bound}
    Suppose $\potential$ is a locally stable pair potential with constant $\localStability$.
    For all $x \in \groundspace$ and $\eta \in \feasibleCountingMeasures$ it holds that $\influence(x, \eta) \ge - \localStability$.
\end{lemma}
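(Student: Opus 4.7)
The plan is to reduce the bound to the finite-configuration case and pass to the limit. Since $\eta$ is locally finite, I can enumerate its support as $\eta = \sum_{i \in I} \delta_{x_i}$ with $I$ at most countable (using the representation from \Cref{sec:gpp}), and define the truncations $\xi_n = \sum_{i=1}^{n} \delta_{x_i}$, each of which lies in $\finiteCountingMeasures$ with $\xi_n \le \eta$. The defining property of $\feasibleCountingMeasures$ in \eqref{eq:feasible} then gives $\hamiltonian(\xi_n) < \infty$ for every $n$, so local stability applies to each $\xi_n$ and yields
\[
    \hamiltonian(\xi_n + \delta_x) - \hamiltonian(\xi_n) \ge -\localStability.
\]

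Next, I rewrite this increment as $\influence(x, \xi_n)$. Expanding the factorial measure $(\xi_n + \delta_x)^{(2)}$ and using the symmetry of $\potential$ gives $\hamiltonian(\xi_n + \delta_x) = \hamiltonian(\xi_n) + \influence(x, \xi_n)$, where both sides are interpreted as finite sums of terms in $(-\infty, \infty]$ (this is the same bookkeeping already used in the proof of \Cref{lemma:energy_decomposition}). Consequently, $\influence(x, \xi_n) \ge -\localStability$ for every $n$. If $\int_\groundspace \absolute{\potential(x, y)} \eta(\diff y) = \infty$, then by the convention in the definition of $\influence$, $\influence(x, \eta) = \infty \ge -\localStability$ and the claim is trivial. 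Otherwise $\sum_{i \in I} \potential(x, x_i)$ converges absolutely, hence unconditionally, and
\[
    \influence(x, \eta) = \int_\groundspace \potential(x, y) \eta(\diff y) = \lim_{n \to \infty} \sum_{i=1}^{n} \potential(x, x_i) = \lim_{n \to \infty} \influence(x, \xi_n) \ge -\localStability.
\]

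The argument is elementary and the only delicate point is the extended-real arithmetic used in the step $\hamiltonian(\xi_n + \delta_x) = \hamiltonian(\xi_n) + \influence(x, \xi_n)$: this identity must remain valid even when the left-hand side is infinite (in which case $\influence(x, \xi_n) = \infty$ as well, and the inequality against $-\localStability$ is trivial). All such cases are handled uniformly because $\xi_n$ is a finite configuration so the factorial measure is a finite sum of Dirac masses and the integrals reduce to finite sums.
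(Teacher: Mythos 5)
Your proof is correct and follows essentially the same strategy as the paper: truncate $\eta$ to finite sub-configurations, apply local stability to each truncation, and pass to the limit. The only place you diverge is in the bookkeeping. The paper's proof works exclusively with the nonnegative function $f(y) = \max(0, -\potential(x,y))$ and the sign-restricted truncations $\eta_{S_n}$ (keeping only the points $x_i$ with $\potential(x,x_i) \le 0$); since $\int f\,\diff\eta$ is always well-defined in $[0,\infty]$, this sidesteps any convergence case analysis, and showing $\int f\,\diff\eta \le \localStability$ immediately implies $\influence(x,\eta) \ge -\localStability$ whether or not the full integral converges. You instead use the unrestricted truncations $\xi_n$ and handle convergence by an explicit dichotomy: the claim is trivial when $\int\absolute{\potential(x,y)}\,\eta(\diff y) = \infty$, and otherwise absolute convergence makes $\influence(x,\xi_n) \to \influence(x,\eta)$ legitimate. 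Both are valid; the paper's version is marginally tidier since it needs no case split, while yours is perhaps more transparent about where the convention $\influence = \infty$ enters. One small remark: your identity $\hamiltonian(\xi_n + \delta_x) = \hamiltonian(\xi_n) + \influence(x,\xi_n)$ does require $\hamiltonian(\xi_n) < \infty$ to avoid $\infty - \infty$, but this is exactly what feasibility of $\eta$ guarantees, so the step is sound.
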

\begin{proof}
    First, we consider the case that $\eta \in \feasibleCountingMeasures \cap \finiteCountingMeasures$ and observe that, by local stability, 
    \[
        \influence(x, \eta) = \hamiltonian(\eta + \delta_x) - \hamiltonian(\eta) \ge - \localStability.
    \]
    Next, suppose that $\eta \in \feasibleCountingMeasures \setminus\finiteCountingMeasures$, and let $f: \groundspace \to \extended{\R_{\ge 0}}, y \mapsto \max(0, -\potential(x, y))$.
    Note that to prove our claim, it suffices to show that $\int f \diff \eta \le \localStability$.
    Let $\eta = \sum_{i \in \N} \delta_{x_i}$ for some sequence $x_i \in \groundspace$.
    For $n \in \N$ define $\eta_n = \sum_{i = 1}^{n} \delta_{x_i}$ and note that $\eta_n \to \eta$ setwise monotonically.
    Therefore, we obtain
    \[
        \int_{\groundspace} f(y) \eta(\diff y) = \lim_{n \to \infty} \int_{\groundspace} f(y) \eta_n(\diff y).
    \]
    Next, for $n\in\N$ let $S_n$ be the set of indices $i \in \{1, \dots, n\}$ such that $\potential(x, x_i) \le 0$ and let $\eta_{S_n} = \sum_{i \in S_n} \delta_{x_i}$.
    Observe that $\eta_{S_n} \in \finiteCountingMeasures$ with $\eta_{s_n} \le \eta$ and, since $\eta \in \feasibleCountingMeasures$, $\hamiltonian(\eta_{S_n}) < \infty$.
    Consequently, by local stability, we have
    \[
        \int_{\groundspace} f(y) \eta_n(\diff y) = \sum_{i=1}^{n} f(x_i) = - \sum_{i \in S_n} \potential(x, x_i) = -(\hamiltonian(\eta_{S_n} + \delta_x) - \hamiltonian(\eta_{S_n})) \le \localStability .   
    \]
    Taking the limit $n \to \infty$ concludes the proof.
\end{proof}

We proceed to show that $\feasibleCountingMeasures$ provides boundary conditions for which finite-volume Gibbs measures are well-defined.
\begin{lemma} \label{lemma:bound_partition_function}
    Suppose $\potential$ is locally stable with constant $\localStability$.
    For all $\subregion \in \boundedBorel$ and $\xi \in \feasibleCountingMeasures$ it holds that $\partitionFunction_{\subregion \mid \xi}(\activity) < \infty$.
    In particular, $\gibbs_{\subregion \mid \xi}$ is well-defined.
\end{lemma}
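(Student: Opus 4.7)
The plan is to derive a pointwise upper bound of the form $\eulerE^{-\hamiltonian_{\subregion}(\eta \mid \xi)} \le \eulerE^{2\localStability n}$, where $n = \eta(\subregion)$, and then recognize the resulting majorant of the partition function as a convergent exponential series using that $\volume(\subregion) < \infty$. First I would reduce to configurations with $\hamiltonian_{\subregion}(\eta \mid \xi) < \infty$, since the rest contribute zero to the sum defining $\partitionFunction_{\subregion \mid \xi}$. For such $\eta = \sum_{i=1}^n \delta_{x_i}$, identity~\eqref{lemma:energy_decomposition:full} of \Cref{lemma:energy_decomposition} gives the decomposition $\hamiltonian_{\subregion}(\eta \mid \xi) = \hamiltonian(\eta) + \int_{\subregion} \influence(x, \xi_{\subregion^c}) \eta(\diff x)$, after observing that the first half-sum there is precisely $\hamiltonian(\eta)$.

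I would then bound each term separately. The self-energy is controlled by iterating local stability: since $\hamiltonian$ is hereditary, every intermediate partial configuration obtained by adding the $x_i$ one at a time starting from $\mathbf{0}$ has finite energy, so each step decreases the energy by at most $\localStability$, yielding $\hamiltonian(\eta) \ge -\localStability n$. For the boundary term, I would first check that $\xi_{\subregion^c} \in \feasibleCountingMeasures$, which follows because every finite sub-configuration of $\xi_{\subregion^c}$ is also a finite sub-configuration of $\xi$ and hence has finite energy by feasibility of $\xi$. Then \Cref{lemma:influence_bound} gives $\influence(x, \xi_{\subregion^c}) \ge -\localStability$ for every $x \in \subregion$, so that $\int_{\subregion} \influence(x, \xi_{\subregion^c}) \eta(\diff x) \ge -\localStability n$.

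Combining the two estimates gives $\hamiltonian_{\subregion}(\eta \mid \xi) \ge -2\localStability n$, and substituting into the defining series majorizes $\partitionFunction_{\subregion \mid \xi}(\activity)$ by $\exp(\activity \eulerE^{2\localStability} \volume(\subregion))$, which is finite because $\subregion$ is bounded. The only subtlety is guaranteeing that \eqref{lemma:energy_decomposition:full} is not used in an indeterminate form; this is taken care of by the initial reduction, since whenever $\hamiltonian_{\subregion}(\eta \mid \xi) < \infty$ the definition of $\hamiltonian_{\subregion}$ forces the boundary integral with $\xi$ to converge absolutely and $\hamiltonian(\eta)$ to be finite, so both pieces of the decomposition are genuinely finite and the two lower bounds add cleanly. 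Beyond this bookkeeping the argument is a routine estimate with no serious obstacle.
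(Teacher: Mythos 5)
Your proposal is correct and follows essentially the same path as the paper: both reduce to finite-energy configurations, invoke \Cref{lemma:energy_decomposition} and \Cref{lemma:influence_bound}, and dominate the partition function by an exponential series. The only cosmetic difference is in bounding the self-energy term: the paper applies \Cref{lemma:influence_bound} to the integral form $\tfrac12\int_{\subregion}\influence(x,\eta-\delta_x)\,\eta(\diff x)$ to get $-\tfrac12\localStability\,\eta(\subregion)$, whereas you iterate local stability to bound $\hamiltonian(\eta)\ge -\localStability\,\eta(\subregion)$ directly, landing on the slightly looser exponent $2\localStability$ instead of the paper's $\tfrac32\localStability$; both suffice.
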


\begin{proof}
    The main idea is to show that, for all $\eta \in \countingMeasures_{\subregion}$, it holds that $\hamiltonian_{\subregion}(\eta \mid \xi) \ge - C \localStability \eta(\subregion)$ for some absolute constant $C$.
    If $\hamiltonian(\eta) = \infty$ then $\hamiltonian_{\subregion}(\eta \mid \xi) = \infty$.
    Otherwise, we use identity \eqref{lemma:energy_decomposition:full} and \Cref{lemma:influence_bound} to obtain
    \[
        \hamiltonian_{\subregion}(\eta \mid \xi) 
        = \frac{1}{2} \int_{\subregion} \influence(x, \eta - \delta_x) \eta(\diff x) + \int_{\subregion} \influence(x, \xi_{\subregion^c}) \eta(\diff x) 
        \ge - \frac{3}{2} \localStability \eta(\subregion) .
    \]
    Thus, we get
    \begin{align*}
        \partitionFunction_{\subregion \mid \xi} (\activity) \le 1 + \sum_{n = 1}^{\infty} \frac{\activity^n \eulerE^{\frac{3}{2} \localStability n} \volume(\subregion)^n}{n!},
    \end{align*}
    which proves the claim.
\end{proof}

Knowing that $\gibbs_{\subregion \mid \xi}$ is well-defined for every $\xi \in \countingMeasures_{\subregion}$, \eqref{lemma:energy_decomposition:point} can be used to derive the following identity, which was independently proven by Georgii \cite{georgii1976canonical} and Nguyen and Zessin \cite{xanh1979integral}, and which is hence known as GNZ equation.
As some of our definitions differ slightly from those of the original papers, we briefly reprove it.
\begin{lemma}[Finite-volume GNZ equation \cite{georgii1976canonical,xanh1979integral}] \label{lemma:gnz}
    Suppose $\potential$ is locally stable with constant $\localStability$, and let $\subregion \in \boundedBorel$ and $\xi \in \feasibleCountingMeasures$.
    For all non-negative measurable $F: \groundspace \times \countingMeasures \to \extended{\R_{\ge 0}}$ it holds that
    \[
        \int_{\countingMeasures_{\subregion}} \int_{\subregion} F(x, \eta) \eta(\diff x) \gibbs_{\subregion \mid \xi}(\diff \eta) = \int_{\subregion} \int_{\countingMeasures_{\subregion}} \activity \eulerE^{- \influence(x, \eta + \xi_{\subregion^c})}  F(x, \eta + \delta_x) \gibbs_{\subregion \mid \xi}(\diff \eta) \volume(\diff x).
    \]
\end{lemma}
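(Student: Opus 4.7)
The plan is to reduce the identity to the Mecke equation (refined Campbell formula) for the Poisson point process $Q_{\subregion}$, which asserts that for any non-negative measurable $f: \subregion \times \countingMeasures_{\subregion} \to \extended{\R_{\ge 0}}$,
\[
    \int_{\countingMeasures_{\subregion}} \int_{\subregion} f(x, \eta) \eta(\diff x) Q_{\subregion}(\diff \eta) = \int_{\subregion} \int_{\countingMeasures_{\subregion}} f(x, \eta + \delta_x) Q_{\subregion}(\diff \eta) \volume(\diff x).
\]
This is a standard property of Poisson processes which we take as known.

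First, I would expand the left-hand side of the claim using the definition of the finite-volume Gibbs measure $\gibbs_{\subregion \mid \xi}$:
\[
    \text{LHS} = \frac{\eulerE^{\volume(\subregion)}}{\partitionFunction_{\subregion \mid \xi}(\activity)} \int_{\countingMeasures_{\subregion}} \int_{\subregion} F(x, \eta) \activity^{\eta(\subregion)} \eulerE^{-\hamiltonian_{\subregion}(\eta \mid \xi)} \eta(\diff x) Q_{\subregion}(\diff \eta).
\]
Note that $\partitionFunction_{\subregion \mid \xi}(\activity) < \infty$ by \Cref{lemma:bound_partition_function}, since $\xi \in \feasibleCountingMeasures$, so the prefactor is well-defined. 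Next I would apply the Mecke equation with
\[
    f(x, \eta) \coloneqq F(x, \eta) \activity^{\eta(\subregion)} \eulerE^{-\hamiltonian_{\subregion}(\eta \mid \xi)},
\]
noting that $f$ is non-negative and measurable (with the usual convention $\eulerE^{-\infty} = 0$). After swapping, the integrand becomes $F(x, \eta+\delta_x) \activity^{\eta(\subregion)+1} \eulerE^{-\hamiltonian_{\subregion}(\eta + \delta_x \mid \xi)}$.

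Now I would invoke identity \eqref{lemma:energy_decomposition:point} of \Cref{lemma:energy_decomposition}, which gives $\hamiltonian_{\subregion}(\eta + \delta_x \mid \xi) = \hamiltonian_{\subregion}(\eta \mid \xi) + \influence(x, \eta + \xi_{\subregion^c})$, so that
\[
    \eulerE^{-\hamiltonian_{\subregion}(\eta + \delta_x \mid \xi)} = \eulerE^{-\influence(x, \eta + \xi_{\subregion^c})} \cdot \eulerE^{-\hamiltonian_{\subregion}(\eta \mid \xi)},
\]
again with the convention that both sides are $0$ if either $\hamiltonian_{\subregion}(\eta\mid\xi) = \infty$ or $\influence(x, \eta+\xi_{\subregion^c}) = \infty$ (the conventions match because the complementary case $\influence = -\infty$ is ruled out by local stability combined with $\xi \in \feasibleCountingMeasures$ via \Cref{lemma:influence_bound}). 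Factoring out $\activity \, \eulerE^{-\influence(x, \eta + \xi_{\subregion^c})}$ and identifying the remaining $\frac{\eulerE^{\volume(\subregion)}}{\partitionFunction_{\subregion \mid \xi}(\activity)} \activity^{\eta(\subregion)} \eulerE^{-\hamiltonian_{\subregion}(\eta \mid \xi)} Q_{\subregion}(\diff \eta)$ as the density of $\gibbs_{\subregion \mid \xi}$ yields the right-hand side.

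The only mild subtlety I would be careful about is the handling of infinities in the energy and the influence, which is why the non-negativity of $F$ and the conventions above matter; this is handled cleanly because both sides evaluate to $0$ precisely on the same configurations (those where either $\hamiltonian_{\subregion}(\eta\mid\xi)$ is infinite at $\eta$, or the added point $x$ produces an infinite pairwise interaction with $\eta + \xi_{\subregion^c}$). Beyond this bookkeeping the argument is purely a direct computation built on the Mecke equation and \Cref{lemma:energy_decomposition}, so I do not expect any substantive obstacle.
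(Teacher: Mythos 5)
Your argument is essentially identical to the paper's: both expand $\gibbs_{\subregion \mid \xi}$ against the reference Poisson process $Q_{\subregion}$, apply the Mecke equation to the function $f(x,\eta) = F(x,\eta)\,\activity^{\eta(\subregion)}\eulerE^{-\hamiltonian_{\subregion}(\eta\mid\xi)}$ (the paper calls it $G$ and absorbs the normalizing prefactor), and then use the single-point energy decomposition \eqref{lemma:energy_decomposition:point} to pull out the factor $\activity\,\eulerE^{-\influence(x,\eta+\xi_{\subregion^c})}$ and reassemble the Gibbs density. One small remark on your bookkeeping: you need not invoke \Cref{lemma:influence_bound} to exclude $\influence = -\infty$, since by the paper's convention $\influence(x,\eta)$ is set to $+\infty$ whenever the defining integral fails to converge absolutely, so $\influence$ never takes the value $-\infty$ in the first place; apart from that the proof is correct and matches the paper's.
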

\begin{proof}
    Denoting by $Q_{\subregion}$ a Poisson point process with intensity measure $\volume(\cdot \cap \subregion)$, we get
    \begin{align*}
        \int_{\countingMeasures_{\subregion}} \int_{\subregion} F(x, \eta) \eta(\diff x) \gibbs_{\subregion \mid \xi}(\diff \eta) 
        &= \frac{\eulerE^{\volume(\subregion)}}{\partitionFunction_{\subregion \mid \xi} (\activity)}\int_{\countingMeasures_{\subregion}} \int_{\subregion} F(x, \eta) \eta(\diff x) \activity^{\eta(\subregion)} \eulerE^{- \hamiltonian_{\subregion}(\eta \mid \xi)}  Q_{\subregion}(\diff \eta) \\
        &= \int_{\countingMeasures_{\subregion}} \int_{\subregion} G(x, \eta) \eta(\diff x) Q_{\subregion}(\diff \eta)
    \end{align*}
    for $G(x, \eta) \coloneqq \frac{\eulerE^{\volume(\subregion)}}{\partitionFunction_{\subregion \mid \xi} (\activity)} F(x, \eta) \activity^{\eta(\subregion)} \eulerE^{- \hamiltonian_{\subregion}(\eta \mid \xi)}$. 
    Applying the Mecke equation (see \cite[Proposition 4.1]{last2017lectures}) and the decomposition in \eqref{lemma:energy_decomposition:point} then yields
    \begin{align*}
        \int_{\countingMeasures_{\subregion}} \int_{\subregion} G(x, \eta) \eta(\diff x) Q_{\subregion}(\diff \eta)
        &= \int_{\subregion} \int_{\countingMeasures_{\subregion}} G(x, \eta + \delta_x) Q_{\subregion}(\diff \eta) \volume(\diff x) \\
        &= \int_{\subregion} \frac{\eulerE^{\volume(\subregion)}}{\partitionFunction_{\subregion \mid \xi} (\activity)}\int_{\countingMeasures_{\subregion}} \activity \eulerE^{- \influence(x, \eta + \xi_{\subregion^c})}  F(x, \eta + \delta_x) \activity^{\eta(\subregion)} \eulerE^{- \hamiltonian_{\subregion}(\eta \mid \xi)} Q_{\subregion}(\diff \eta) \volume(\diff x) \\
        &= \int_{\subregion} \int_{\countingMeasures_{\subregion}} \activity \eulerE^{- \influence(x, \eta + \xi_{\subregion^c})}  F(x, \eta + \delta_x) \gibbs_{\subregion \mid \xi}(\diff \eta) \volume(\diff x),
    \end{align*}
    which concludes the proof.
\end{proof}

Next, we introduce a restriction of $\countingMeasures_{\subregion}$ to configurations that are consistent with a given boundary condition $\xi \in \countingMeasures$ by
\begin{align}
    \countingMeasures_{\subregion \mid \xi} \coloneqq \{\eta \in \countingMeasures_{\subregion} \mid \eta + \xi_{\subregion^c} \in \feasibleCountingMeasures \} .
    \label{eq:consistent}
\end{align}
Using similar arguments as for $\feasibleCountingMeasures$, it can be proven that $\countingMeasures_{\subregion \mid \xi} \in \countingAlgebra$. 
We denote the trace of $\countingMeasures_{\subregion \mid \xi}$ in $\countingAlgebra$ by $\countingAlgebra_{\subregion \mid \xi}$.
Rather than studying the finite-volume Gibbs point process $\gibbs_{\subregion \mid \xi}$ on $(\countingMeasures_{\subregion}, \countingAlgebra_{\subregion})$, we will restrict it to $(\countingMeasures_{\subregion \mid \xi}, \countingAlgebra_{\subregion \mid \xi})$, which is justified for every $\xi \in \feasibleCountingMeasures$ due to the following lemma.

\begin{lemma} \label{lemma:gibbs_support}
    Suppose $\potential$ is locally stable with constant $\localStability$, and let $\subregion \in \boundedBorel$. For every $\xi \in \feasibleCountingMeasures$ it holds that $\gibbs_{\subregion \mid \xi}(\countingMeasures_{\subregion \mid \xi}) = 1$.
\end{lemma}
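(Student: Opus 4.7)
The plan is to show that the complement set $\countingMeasures_{\subregion} \setminus \countingMeasures_{\subregion \mid \xi}$, which consists of those $\eta \in \countingMeasures_{\subregion}$ for which some finite sub-configuration $\zeta \le \eta + \xi_{\subregion^c}$ has $\hamiltonian(\zeta) = \infty$, is $\gibbs_{\subregion \mid \xi}$-null. The starting point is to split any such offending $\zeta$ as $\zeta = \zeta_1 + \zeta_2$ with $\zeta_1 \le \eta$ and $\zeta_2 \le \xi_{\subregion^c}$. Because these two pieces have disjoint supports (one in $\subregion$, one in $\subregion^c$), the energy factorises as $\hamiltonian(\zeta) = \hamiltonian(\zeta_1) + \hamiltonian(\zeta_2) + \sum_{x \in \support_{\zeta_1}}\sum_{y \in \support_{\zeta_2}} \zeta_1(x)\zeta_2(y)\,\potential(x, y)$, which is a finite double sum of terms in $(-\infty, \infty]$. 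Since $\xi \in \feasibleCountingMeasures$, the term $\hamiltonian(\zeta_2)$ is automatically finite. Hence the only way $\hamiltonian(\zeta) = \infty$ is either (a) $\hamiltonian(\zeta_1) = \infty$, which by heredity of $\hamiltonian$ and finiteness of $\eta \in \countingMeasures_{\subregion}$ forces $\hamiltonian(\eta) = \infty$, or (b) there exist $x \in \support_{\eta}$ and $y \in \support_{\xi_{\subregion^c}}$ with $\potential(x, y) = \infty$.

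The remaining task is then to show that the $\gibbs_{\subregion \mid \xi}$-measure of each of these two sets is zero. Case (a) is immediate: whenever $\hamiltonian(\eta) = \infty$, the definition of $\hamiltonian_{\subregion}(\eta \mid \xi)$ gives $\hamiltonian_{\subregion}(\eta \mid \xi) = \infty$, so the density $\activity^{\eta(\subregion)} \eulerE^{-\hamiltonian_{\subregion}(\eta \mid \xi)}$ of $\gibbs_{\subregion \mid \xi}$ with respect to $Q_{\subregion}$ vanishes on this set. For case (b), define the measurable set
\[
    B \coloneqq \{x \in \subregion \mid \potential(x, y) = \infty \text{ for some } y \in \support_{\xi_{\subregion^c}}\} = \bigcup_{y \in \support_{\xi_{\subregion^c}}} \{x \in \subregion \mid \potential(x, y) = \infty\}\in \borel ,
\]
where measurability follows from $\support_{\xi_{\subregion^c}}$ being at most countable. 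Then case (b) exactly says $\eta(B) \ge 1$.

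By Markov's inequality it suffices to bound $\E_{\gibbs_{\subregion \mid \xi}}[\eta(B)]$, and here the GNZ equation (\Cref{lemma:gnz}) applied to $F(x, \eta) = \ind{x \in B}$ does the job: it rewrites this expectation as an integral of $\activity \eulerE^{-\influence(x, \eta + \xi_{\subregion^c})}$ over $x \in B$. For any $x \in B$, the point $y$ witnessing $x \in B$ contributes a $+\infty$ to $\influence(x, \eta + \xi_{\subregion^c})$; this forces $\influence(x, \eta + \xi_{\subregion^c}) = \infty$ (either directly, or by the convention that $\influence$ equals $\infty$ when the integral fails to converge absolutely), and hence $\eulerE^{-\influence(x, \eta + \xi_{\subregion^c})} = 0$. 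So the GNZ right-hand side is zero and $\gibbs_{\subregion \mid \xi}(\eta(B) \ge 1) = 0$. Combining the two cases yields $\gibbs_{\subregion \mid \xi}(\countingMeasures_{\subregion} \setminus \countingMeasures_{\subregion \mid \xi}) = 0$, and since $\gibbs_{\subregion \mid \xi}$ is a point process on $\subregion$, the claim follows.

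The main subtlety I expect is not in any single step but in the bookkeeping of the energy decomposition: one must keep careful track of when the individual terms in $\hamiltonian(\zeta)$ are finite or $+\infty$ to ensure that the dichotomy between cases (a) and (b) is exhaustive, and similarly justify that a single $+\infty$ contribution to $\influence(x, \eta + \xi_{\subregion^c})$ really produces $\eulerE^{-\influence} = 0$ given the convention that $\influence = \infty$ whenever absolute convergence fails. Measurability of $B$ is the other point worth flagging, but it is handled by the observation that $\support_{\xi_{\subregion^c}}$ is at most countable.
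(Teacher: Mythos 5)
Your proof is correct, and it shares the core structure with the paper's: decompose the offending $\zeta$ along $\subregion$/$\subregion^c$, note that the piece inside $\xi_{\subregion^c}$ has finite energy by feasibility of $\xi$, and then split into the case $\hamiltonian(\eta) = \infty$ versus the case of an infinite cross-interaction between $\eta$ and the boundary. Case (a) is handled identically in both.

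The place where you diverge is case (b). You define the set $B$ of points in $\subregion$ with infinite interaction against $\support_{\xi_{\subregion^c}}$, invoke Markov's inequality, apply the GNZ equation (\Cref{lemma:gnz}) to $F(x,\eta) = \ind{x \in B}$, and observe that $\eulerE^{-\influence(x,\eta+\xi_{\subregion^c})} = 0$ for $x\in B$ to conclude $\gibbs_{\subregion\mid\xi}(\eta(B)\ge 1) = 0$. The paper instead argues directly at the level of the density: the same infinite cross-interaction that makes $\influence(x,\eta+\xi_{\subregion^c})=\infty$ also forces the cross-term integral $\int_{\subregion}\int_{\subregion^c}\absolute{\potential(x,y)}\xi(\diff y)\eta(\diff x)$ to diverge, which by the stipulated convention gives $\hamiltonian_{\subregion}(\eta\mid\xi)=\infty$ and hence a vanishing Radon--Nikodym density $\activity^{\eta(\subregion)}\eulerE^{-\hamiltonian_{\subregion}(\eta\mid\xi)}$ on this set. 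In other words, case (b) can be dispatched exactly like case (a), without invoking GNZ (which itself relies on the Mecke equation). Your route is sound but brings in heavier machinery than necessary; the key observation you already made --- that $\influence(x,\eta+\xi_{\subregion^c})=\infty$ for witnessing $x$ --- already implies $\hamiltonian_{\subregion}(\eta\mid\xi)=\infty$ through the definition of the conditional energy, short-circuiting the detour through $B$, Markov, and GNZ.
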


\begin{proof}
    We show that $\hamiltonian_{\subregion}(\eta \mid \xi) = \infty$ for all $\eta \in \countingMeasures_{\subregion} \setminus \countingMeasures_{\subregion \mid \xi}$.
    If $\hamiltonian(\eta)=\infty$ this obviously holds.
    Hence, assume $\hamiltonian(\eta) < \infty$.
    By definition of $\countingMeasures_{\subregion \mid \xi}$ and the fact that $\hamiltonian$ is hereditary, we can find some $\zeta \in \finiteCountingMeasures$, $\zeta \le \xi_{\subregion^c}$ such that $\hamiltonian(\eta + \zeta) = \infty$.
    Since $\xi \in \feasibleCountingMeasures$, it further holds that $\hamiltonian(\zeta) < \infty$. 
    Next, note that
    \[
        \hamiltonian(\eta + \zeta) = \hamiltonian(\eta) + \hamiltonian(\zeta) + \int_{\subregion} \int_{\subregion^c} \potential(x, y) \zeta(\diff y) \eta(\diff x).
    \]
    Hence, there must be some $x \in \support_{\eta}, y \in \support_{\zeta}$ such that $\potential(x, y) = \infty$.
    But then, since $\zeta \le \xi_{\subregion^c}$, we know that $\int_{\subregion^c} \absolute{\potential(x, y)} \xi(\diff y) \eta(\diff x) = \infty$ and hence $\hamiltonian_{\subregion}(\eta \mid \xi) = \infty$.
    Thus, we have
    \[
        \gibbs_{\subregion \mid \xi}(\countingMeasures_{\subregion \mid \xi}^c) 
        = \int_{\countingMeasures_{\subregion} \setminus\countingMeasures_{\subregion \mid \xi}} \activity^{\eta(\subregion)} \eulerE^{-\hamiltonian_{\subregion}(\eta \mid \xi)} Q_{\subregion}(\diff \eta) 
        = 0,  
    \]
    which concludes the proof.
\end{proof}

Finally, we introduce the main lemma that we use to show uniqueness of infinite-volume Gibbs point processes.
It is a simple criterion for equivalence of infinite-volume Gibbs measures that is derived from the DLR formalism.
\begin{lemma} \label{lemma:uniqueness_from_ssm}
    Let $\gibbs_1, \gibbs_2 \in \gibbsMeasures(\activity, \potential)$ for an activity $\activity \in \R_{\ge 0}$ and potential $\potential$.
    Suppose there is a non-decreasing sequence $(\subregion_{k})_{k \in \N}$ in $\boundedBorel$, such that
    \begin{enumerate}
        \item for every $\Delta \in \boundedBorel$ it holds that $\Delta \subseteq \subregion_k$ for some $k \in \N$, and \label{lemma:uniqueness_from_ssm:cover}
        \item for every $k \in \N$ it holds that
        \[
            \lim_{n \to \infty} \esssup_{\xi_1 \sim \gibbs_1, \xi_2 \sim \gibbs_2} \dtv{\gibbs_{\subregion_{k+n} \mid \xi_1}[\subregion_{k}]}{\gibbs_{\subregion_{k+n} \mid \xi_2}[\subregion_{k}]} = 0,
        \]
        where $\dtv{\cdot}{\cdot}$ denotes the total variation distance, and the essential supremum is taken over pairs $(\xi_1, \xi_2) \in \countingMeasures \times \countingMeasures$ and with respect to the product measure $\gibbs_1 \otimes \gibbs_2$. \label{lemma:uniqueness_from_ssm:dtv}
    \end{enumerate}
    Then $\gibbs_1 = \gibbs_2$.
\end{lemma}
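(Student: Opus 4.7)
The plan is to reduce the infinite-dimensional equality $\gibbs_1 = \gibbs_2$ to a check on finitely many bounded test events via \Cref{fact:equality}. Fix $\ell \in \N$, bounded Borel sets $\subregion_1, \dots, \subregion_{\ell} \in \boundedBorel$, and integers $m_1, \dots, m_{\ell} \in \N_0$, and set
\[
    A \coloneqq \{\eta \in \countingMeasures : \eta(\subregion_i) = m_i \text{ for all } i \le \ell\} \in \countingAlgebra.
\]
It suffices to prove $\gibbs_1(A) = \gibbs_2(A)$. Since $\bigcup_{i \le \ell} \subregion_i$ is bounded, condition (\ref{lemma:uniqueness_from_ssm:cover}) furnishes some $k \in \N$ with $\subregion_i \subseteq \subregion_{k}$ for every $i \le \ell$.

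For arbitrary $n \in \N$, I apply the DLR formalism (\Cref{def:dlr}) with $\subregion = \subregion_{k+n}$ and $f = \mathds{1}_A$. The key observation is that $A$ depends only on the restriction of a configuration to $\subregion_k \subseteq \subregion_{k+n}$: for any $\eta \in \countingMeasures_{\subregion_{k+n}}$ and $\xi \in \countingMeasures$, the measure $\xi_{\subregion_{k+n}^c}$ has no mass in $\subregion_i \subseteq \subregion_k$, so $(\eta + \xi_{\subregion_{k+n}^c})(\subregion_i) = \eta(\subregion_i) = \eta_{\subregion_k}(\subregion_i)$. Hence $\mathds{1}_A(\eta + \xi_{\subregion_{k+n}^c}) = \mathds{1}_A(\eta)$ and, rewriting in terms of the projection to $\subregion_k$,
\[
    \gibbs_j(A) = \int_{\countingMeasures} \gibbs_{\subregion_{k+n} \mid \xi}[\subregion_k](A) \, \gibbs_j(\diff \xi),
\]
valid for $j \in \{1,2\}$, where condition (1) of the DLR formalism ensures that $\gibbs_{\subregion_{k+n}\mid \xi}$ is well-defined for $\gibbs_j$-almost every boundary condition $\xi$.

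Multiplying by the trivial factor $\gibbs_{3-j}(\countingMeasures) = 1$ to turn each single integral into a double integral with respect to $\gibbs_1 \otimes \gibbs_2$, subtracting, and using the elementary bound $\absolute{P(A) - Q(A)} \le \dtv{P}{Q}$ for any two probability measures, I obtain
\[
    \absolute{\gibbs_1(A) - \gibbs_2(A)}
    \le \int_{\countingMeasures} \int_{\countingMeasures} \dtv{\gibbs_{\subregion_{k+n} \mid \xi_1}[\subregion_k]}{\gibbs_{\subregion_{k+n} \mid \xi_2}[\subregion_k]} \, (\gibbs_1 \otimes \gibbs_2)(\diff \xi_1, \diff \xi_2).
\]
The integrand is $(\gibbs_1 \otimes \gibbs_2)$-almost everywhere bounded by the essential supremum appearing in hypothesis (\ref{lemma:uniqueness_from_ssm:dtv}); letting $n \to \infty$ drives this upper bound to $0$. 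Thus $\gibbs_1(A) = \gibbs_2(A)$, and \Cref{fact:equality} concludes the proof.

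There is no substantial obstacle here; the argument is essentially a bookkeeping reduction. The only point requiring care is the observation that $A$ is determined by the restriction to $\subregion_k$ independently of the boundary piece $\xi_{\subregion_{k+n}^c}$, which is precisely what allows the projection $[\subregion_k]$ to appear inside the DLR integral and lets the hypothesized closeness of finite-volume marginals (uniformly in the boundary condition) propagate to equality of the infinite-volume measures on arbitrary bounded test events.
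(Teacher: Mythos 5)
Your proof is correct and follows essentially the same route as the paper's: reduce via \Cref{fact:equality} to events of the form $\{N_{\Delta_1} = m_1, \dots\}$, choose $k$ so that $\Delta_i \subseteq \Lambda_k$, note that membership in $A$ depends only on the restriction to $\Lambda_k$, apply the DLR identity with window $\Lambda_{k+n}$, bound the difference by the total variation distance of the projections, and pass to the essential supremum and the limit $n \to \infty$. The only cosmetic differences are that you make explicit the step of tensoring with the trivial factor to obtain the double integral and the remark that DLR condition (1) guarantees the finite-volume Gibbs measure is almost surely well-defined, both of which the paper leaves implicit.
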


\begin{proof}
    By \Cref{fact:equality}, it suffices to show that $\gibbs_1(A) = \gibbs_2(A)$ for all $A \in \countingAlgebra$ of the form 
    \[
        A = \{\pointcount_{\Delta_1} = m_1, \dots, \pointcount_{\Delta_{\ell}} = m_{\ell}\}
    \]
    for $\ell \in \N$, $\Delta_1, \dots, \Delta_{\ell} \in \boundedBorel$ and $m_1, \dots, m_{\ell} \in \N_0$.
    By assumption, we can find some $k \in \N$ such that $\Delta_j \subseteq \subregion_k$ for all $1 \le j \le \ell$.
    This implies $\eta(\Delta_j) = \eta_{\subregion_{k}}(\Delta_j)$ for all $\eta \in \countingMeasures$ and all $1 \le j \le \ell$.
    In particular, for all $n \in \N_0$, all $\eta \in \countingMeasures_{\subregion_{k + n}}$ and all $\xi \in \countingMeasures$, we have that $\eta + \xi_{\groundspace \setminus \subregion_{k + n}} \in A$ if and only if $\eta_{\subregion_k} \in A$.
    By the DLR formalism, we obtain for all $n \in \N_{0}$
    \begin{align*}
        \gibbs_1(A) 
        = \int_{\countingMeasures} \int_{\subregion_{k + n}} \ind{\eta_{\subregion_k} \in A} \gibbs_{\subregion_{k + n} \mid \xi}(\diff \eta) \gibbs_1(\diff \xi) 
        = \int_{\countingMeasures} \gibbs_{\subregion_{k + n} \mid \xi}[\subregion_k](A) \gibbs_1(\diff \xi) .
    \end{align*}
    Applying the same argument to $\gibbs_2$ yields 
    \begin{align*}
        \absolute{\gibbs_1(A) - \gibbs_2(A)} 
        &\le \int_{\countingMeasures} \int_{\countingMeasures} \absolute{\gibbs_{\subregion_{k + n} \mid \xi_1}[\subregion_k](A) - \gibbs_{\subregion_{k + n} \mid \xi_2}[\subregion_k](A)} \gibbs_1(\diff \xi_1) \gibbs_2(\diff \xi_2) \\
        &\le \int_{\countingMeasures} \int_{\countingMeasures} \dtv{\gibbs_{\subregion_{k + n} \mid \xi_1}[\subregion_k]}{\gibbs_{\subregion_{k + n} \mid \xi_2}[\subregion_k]} \gibbs_1(\diff \xi_1) \gibbs_2(\diff \xi_2) \\
        &\le \esssup_{\xi_1 \sim \gibbs_1, \xi_2 \sim \gibbs_2} \dtv{\gibbs_{\subregion_{k+n} \mid \xi_1}[\subregion_{k}]}{\gibbs_{\subregion_{k+n} \mid \xi_2}[\subregion_{k}]} .
    \end{align*}
    Since this holds for all $n$, taking the limit $n \to \infty$ shows $\gibbs_1(A) = \gibbs_2(A)$, concluding the proof.
\end{proof}

\section{Jump processes} \label{sec:jump_process}

\subsection{Terminology and notation}
We recall basic terminology related to kernel and transition groups.
Given a measurable space $(E, \mathcal{E})$, a \emph{kernel} on $(E, \mathcal{E})$ is a map $K: E \times \mathcal{E} \to \extended{\R_{\ge 0}}$ such that:
\begin{enumerate}
    \item For all $x \in E$, $K(x, \cdot)$ is a measure on $(E, \mathcal{E})$.
    \item For all $A \in \mathcal{E}$, $K(\cdot, A)$ is measurable.
\end{enumerate}
Further we say a kernel $K$ is
\begin{itemize}
    \item \emph{finite} if $K(x, \cdot)$ is a finite measure for all $x \in E$,
    \item \emph{sub-stochastic} if $K(x, E) \le 1$ for all $x \in E$, and
    \item \emph{stochastic} if $K(x, E) = 1$ for all $x \in E$.
\end{itemize}
In particular, if $K$ is stochastic then $K(x, \cdot)$ is a probability measure for all $x \in E$.

Occasionally, it will be useful to associate a kernel with an operator on the space of non-negative measurable functions $f: E \to \extended{\R_{\ge 0}}$.
More precisely, we define the function $Kf: E \to \extended{\R_{\ge 0}}$ by setting
\[
    (Kf)(x) \coloneqq \int_E f(y) K(x, \diff y) 
\]
for $x \in E$.
Note that $Kf$ is again measurable (this can be seen by taking monotone limits of simple functions).

Based on these definitions, a \emph{sub-Markov transition family} on $(E, \mathcal{E})$ is a collection $(P_t)_{t \in \R_{\ge 0}}$ such that
\begin{enumerate}
    \item For every $t \in \R_{\ge 0}$, $P_t$ is a sub-stochastic kernel.
    \item For all $x \in E$ and $A \in \mathcal{E}$ we have $P_0(x, A) = \delta_x (A)$.
    \item $(P_t)_{t \in \R_{\ge 0}}$ satisfies the Chapman--Kolmogorov equation: for all $t, s \in \R_{\ge 0}$ and all $x \in E$ , $A \in \mathcal{E}$ it holds that
    \[
        P_{t+s}(x, A) = \int_E P_s (y, A) P_t(x, \diff y) . 
    \]
\end{enumerate}
When talking about a sub-Markov transition family, we will usually drop the index set and simply write $(P_t)$ instead of $(P_t)_{t \in \R_{\ge 0}}$.
Further, we say that $(P_t)$ is a \emph{Markov transition family} if $P_t$ is stochastic for every $t \in \R_{\ge 0}$.

\subsection{Jump kernel and associated transition families} 
We proceed by introducing the basic terminology related to Markov jump processes with values in a general measurable space $(E, \mathcal{E})$.
We will always assume that $\mathcal{E}$ contains all singletons (i.e., $\{x\} \in \mathcal{E}$ for all $x \in E$).

Throughout the paper, we mostly characterize Markov jump processes by their \emph{jump kernel}.
A function $K: E \times \mathcal{E} \to \R_{\ge 0}$ is a jump kernel on $(E, \mathcal{E})$ if:
\begin{enumerate}
    \item $K$ is a finite kernel.
    \item For all $x \in E$ we have $K(x, \{x\}) = 0$.
\end{enumerate}

Given a jump kernel $K$, we call $\kappa: E \to \R_{\ge 0}, x \mapsto K(x, E)$ the \emph{associated rate function}.
Further, note that $\Pi: E \times \mathcal{E} \to [0, 1]$ with
\begin{align} \label{eq:transition_kernel}
    \Pi(x, A) \coloneqq \begin{cases}
        \frac{1}{\kappa(x)} \cdot K(x, A) &\text{ if $\kappa(x) > 0$} \\
        \delta_x(A) &\text{ otherwise} 
    \end{cases} 
\end{align}
defines a Markov kernel on $(E, \mathcal{E})$, which we call the \emph{associated transition kernel}. 

Given a jump kernel $K$ with jump rate function $\kappa$, we recursively define a sub-Markov transition family $(P^{(n)}_{t})$ for every $n \in \N_{0}$ as follows. 
For every $x \in E$, $A \in \mathcal{E}$ and $t \in \R_{\ge 0}$ we set 
\begin{gather}
    P_{t}^{(0)}(x, A) = \eulerE^{- \kappa(x) t} \cdot \delta_{x}(A). \label{eq:transition_family_base}
\end{gather}
Further, for $n \ge 1$, we set
\begin{gather}
    P_{t}^{(n)} (x, A) = \eulerE^{-\kappa(x) t} \cdot \delta_{x}(A) + \int_{0}^{t} \eulerE^{- \kappa(x) s} \int_{E} P_{t-s}^{(n-1)}(y, A) K(x, \diff y) \diff s . \label{eq:transition_family_step}
\end{gather}
The following properties can easily be checked via induction over $n$ (see for example \cite{feller1991introduction}):
\begin{enumerate}
    \item For every $n \in \N_{0}$, $(P_{t}^{(n)})$ is a well-defined sub-Markov transition family, and for every $A \in \mathcal{E}$ the map $(t, x) \mapsto P_{t}^{(n)}(x, A)$ is measurable.
    \item For every $x \in E$, $A \in \mathcal{E}$ and $t \in \R_{\ge 0}$, it holds that $P_{t}^{(n)}(x, A)$ is non-decreasing in $n$.
\end{enumerate}
It immediately follows by monotone convergence that the limit $P_{t}(x, A) = \lim_{n \to \infty} P_{t}^{(n)}(x, A)$ exists and defines a sub-Markov transition family $(P_t)$.
We call $(P_t)$ the \emph{sub-Markov transition family associated to} $K$.
Note that $(P_t)$ is in fact Markov whenever $P_t(x, E) = 1$ for all $x \in E$ and $t \in \R_{\ge 0}$.
In that case, we say that the jump kernel $K$ is \emph{non-explosive} and simply call $(P_t)$ the \emph{transition family associated to} $K$.

To check if a given jump kernel is non-explosive, we are going to use the following simple criterion. 
\begin{lemma}[{\cite[Theorem $16$]{mufa1986coupling}}] \label{lemma:nonexp_test}
    Let $K$ be a jump kernel on $(E, \mathcal{E})$ and let $\kappa$ denote its rate function.
    Suppose there exists a sequence $(E_n)_{n \in \N}$ with $E_n \in \mathcal{E}$ and a non-negative measurable function $f: E \to \R_{\ge 0}$ such that the following holds:
    \begin{enumerate}
        \item $E_n$ converges to $E$ monotonically.
        \item For all $n \in \N$ we have $\sup_{x \in E_n} \kappa(x) < \infty$.
        \item $\lim_{n \to \infty} \inf_{x \in E \setminus E_n} f(x) = \infty$.
        \item There exists some $c \in \R$ such that for all $x \in E$ we have
        \[
            (Kf)(x) \le (c + \kappa(x)) \cdot f(x)
        \]
    \end{enumerate}
    Then $K$ is non-explosive.
\end{lemma}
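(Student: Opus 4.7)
The plan is a Foster--Lyapunov argument: the hypothesis $Kf \le (c+\kappa) f$ rewrites as $Lf \le cf$ for the formal generator $L = K - \kappa I$, which should imply that $\eulerE^{-ct} f(X_t)$ is a supermartingale along the trajectories of the jump process associated with $K$, with the growth of $f$ outside the $E_n$ then preventing escape to infinity. I may assume $c \ge 0$ without loss of generality, since enlarging $c$ only weakens the hypothesis.

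The analytic core is the bound
\begin{align}
    (P_t f)(x) \coloneqq \int_E f(y) \, P_t(x, \diff y) \le \eulerE^{ct} f(x) \qquad \text{for all } x \in E \text{ and } t \ge 0, \label{eq:plan-lyap}
\end{align}
which I would prove by induction on $n$ for the approximations $(P_t^{(n)} f)(x)$ coming from \eqref{eq:transition_family_base}--\eqref{eq:transition_family_step}. The base case is $(P_t^{(0)} f)(x) = \eulerE^{-\kappa(x) t} f(x) \le \eulerE^{ct} f(x)$. For the inductive step, apply $f$ to both sides of \eqref{eq:transition_family_step}, use the inductive hypothesis $(P_{t-s}^{(n-1)} f)(y) \le \eulerE^{c(t-s)} f(y)$, and then plug in $Kf \le (c+\kappa) f$ to obtain
\begin{align*}
    (P_t^{(n)} f)(x)
    &\le \eulerE^{-\kappa(x) t} f(x) + \int_0^t \eulerE^{-\kappa(x) s}\, \eulerE^{c(t-s)} (c + \kappa(x)) f(x) \, \diff s \\
    &= f(x) \left[\, \eulerE^{-\kappa(x) t} + \eulerE^{ct}\bigl(1 - \eulerE^{-(\kappa(x)+c) t}\bigr) \right] = \eulerE^{ct} f(x).
\end{align*}
Since $P_t^{(n)} \uparrow P_t$ setwise, monotone convergence lifts this bound to \eqref{eq:plan-lyap}.

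To extract non-explosion from \eqref{eq:plan-lyap}, I would realize $(P_t)$ as the law of the minimal jump process $(X_t)_{t < T_\infty}$ built from the embedded chain $\Pi$ of \eqref{eq:transition_kernel} and independent exponential holding times of rate $\kappa$, so that $P_t(x, E) = \Pr_x(T_\infty > t)$. Let $\tau_n \coloneqq \inf\{t \ge 0 : X_t \notin E_n\}$. Condition (2) gives $\sup_{E_n} \kappa < \infty$, so the process makes only finitely many jumps before leaving $E_n$; hence $\tau_n \le T_\infty$ and $\tau_n \nearrow T_\infty$ almost surely. The estimate \eqref{eq:plan-lyap}, applied along the stopped process, implies that $M_t \coloneqq \eulerE^{-ct} f(X_{t \wedge \tau_n})$ is a non-negative supermartingale (the jump-process version of Dynkin's formula applied to $\eulerE^{-ct} f$ under $Lf \le cf$). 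Optional stopping combined with $f(X_{\tau_n}) \ge a_n \coloneqq \inf_{y \in E \setminus E_n} f(y)$ then yields
\[
    f(x) \ge \E_x\bigl[\eulerE^{-c \tau_n} f(X_{\tau_n}) \ind{\tau_n \le t}\bigr] \ge \eulerE^{-ct} a_n \Pr_x(\tau_n \le t),
\]
and condition (3) forces $a_n \to \infty$, so $\Pr_x(\tau_n \le t) \to 0$ and hence $\Pr_x(T_\infty \le t) = 0$. Thus $P_t(x, E) = 1$ for all $x$ and $t$, i.e., $K$ is non-explosive.

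The hard part is not the analytic induction, which is a routine calculation, but the passage from the sub-Markov family $(P_t)$ to the probabilistic explosion picture on a general measurable state space: one must construct the canonical minimal jump process and rigorously justify the supermartingale property of $M_t$ together with the application of optional stopping. These facts are standard in the classical treatments of Markov jump processes cited in the excerpt (Feller; Blumenthal and Getoor), and I would invoke them rather than re-derive them.
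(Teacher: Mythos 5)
The paper does not prove this lemma; it is imported directly as Theorem 16 of Chen \cite{mufa1986coupling}, so there is no in-paper proof to compare against. Your Foster--Lyapunov reconstruction is the standard argument behind Chen's criterion and is essentially sound: the analytic induction giving $P_t^{(n)} f \le \eulerE^{ct} f$ is the same computation the paper carries out in \Cref{lemma:potential} with the drift sign reversed, the reduction to $c \ge 0$ is harmless, and you correctly flag the two standard-but-nontrivial imports (the realization of $(P_t)$ by the minimal jump process with $P_t(x,E) = \Pr_x(T_\infty > t)$, and the convergence $\tau_n \nearrow T_\infty$ using boundedness of $\kappa$ on each $E_n$), for which the paper's own \Cref{thm:construction} provides exactly the machinery one would invoke.

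Two cosmetic points. First, the global bound $(P_t f)(x) \le \eulerE^{ct}f(x)$ you derive is not what the probabilistic half of your argument ultimately uses --- that is the supermartingale for the process stopped at $\tau_n$, obtained from the drift inequality $Kf \le (c+\kappa)f$ via Dynkin's formula rather than from the sub-Markov bound. A tidier packaging is to run the very same induction on the absorbed kernel $K_n'(x,\cdot) \coloneqq K(x,\cdot)\ind{x\in E_n}$, which is automatically non-explosive by condition~(2) and satisfies the same drift inequality once $c \ge 0$; this yields $\E_x[f(X_{t\wedge\tau_n})] \le \eulerE^{ct}f(x)$ directly, and hence $\Pr_x(\tau_n \le t) \le \eulerE^{ct}f(x)/a_n$, with no optional stopping required. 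Second, you declare $M_t = \eulerE^{-ct}f(X_{t\wedge\tau_n})$ but the displayed inequality uses $\eulerE^{-c\tau_n}f(X_{\tau_n})$; these correspond to two slightly different supermartingales ($\eulerE^{-ct}f(X_{t\wedge\tau_n})$ versus $\eulerE^{-c(t\wedge\tau_n)}f(X_{t\wedge\tau_n})$), both valid when $c \ge 0$ and both giving the final bound, but the declared object should match the display.
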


\subsection{Stationary distribution and reversibility}
Given a Markov transition family $(P_t)$ on $(E, \mathcal{E})$, we call a probability distribution $\pi$ on $(E, \mathcal{E})$ a \emph{stationary distribution} (or \emph{invariant probability measure}) for $(P_t)$ if for all $t \in \R_{\ge 0}$ and $A \in \mathcal{E}$ it holds that
\[
    \int_{E} P_t(x, A) \pi(\diff x) = \pi(A) . 
\]
A sufficient criterion for $\pi$ to be a stationary distribution is if for all $t \in \R_{\ge 0}$ and $A, B \in \mathcal{E}$ it holds that
\[
    \int_{A} P_t(x, B) \pi(\diff x) = \int_{B} P_t(x, A) \pi(\diff x), 
\]
in which case we say that $(P_t)$ is \emph{reversible} with respect to $\pi$.
Note that the definition of reversibility extends to sub-Markov transition families.
However, it only implies that $\pi$ is an invariant measure if $(P_t)$ is stochastic (hence Markov).

If $(P_t)$ is generated by a jump kernel $K$, reversible measures can be identified using $K$.
This is given by the following lemma, which we believe was first proven by Chen \cite{mufa1980}.
\begin{lemma}[{\cite[Proposition 2]{serfozo2005reversible}}] \label{lemma:reversible_kernel}
    Let $K$ be a jump kernel on $(E, \mathcal{E})$ with associated (sub-)Markov transition family $(P_t)$.
    If a probability measure $\pi$ on $(E, \mathcal{E})$ satisfies 
    \[
        \int_{A} K(x, B) \pi(\diff x) = \int_{B} K(x, A) \pi(\diff x)
    \]
    for all $A, B \in \mathcal{E}$ then $(P_t)$ is reversible with respect to $\pi$.
    Further, if $K$ is non-explosive then $\pi$ is invariant under $(P_t)$.
\end{lemma}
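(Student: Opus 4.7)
The plan is to unpack the recursion \eqref{eq:transition_family_base}--\eqref{eq:transition_family_step} into a sum of path integrals indexed by the number of jumps, verify a path-wise symmetry identity using the detailed balance hypothesis on $K$, and then sum and pass to a monotone limit. Concretely, a straightforward induction on $n$ gives $P_t^{(n)} = \sum_{k=0}^{n} P_t^{[k]}$, where $P_t^{[k]}(x, A)$ is the ``exactly $k$ jumps'' component: it integrates over ordered jump times $0 < u_1 < \cdots < u_k < t$ and intermediate states $z_1, \dots, z_k$, carrying the alternating product of jump kernels $K(x, \diff z_1) K(z_1, \diff z_2) \cdots K(z_{k-1}, \diff z_k)$ together with the holding-time exponentials $\eulerE^{-\kappa(x) u_1} \eulerE^{-\kappa(z_1)(u_2 - u_1)} \cdots \eulerE^{-\kappa(z_k)(t - u_k)}$ and the constraint $z_k \in A$; the base case is $P_t^{[0]}(x, A) = \eulerE^{-\kappa(x) t} \delta_x(A)$. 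By monotone convergence, $P_t = \sum_{k \ge 0} P_t^{[k]}$, so it suffices to show that $\pi(\diff x) P_t^{[k]}(x, \diff y)$ is a symmetric measure on $(E \times E, \mathcal{E} \otimes \mathcal{E})$ for every $k$.

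To obtain this symmetry, two reversals must be combined. First, iterating the hypothesis of the lemma on adjacent factors gives the chain reversal identity
\[
    \pi(\diff x) K(x, \diff z_1) K(z_1, \diff z_2) \cdots K(z_{k-1}, \diff y) = \pi(\diff y) K(y, \diff z_{k-1}) \cdots K(z_1, \diff x)
\]
as measures on $E^{k+1}$. Second, the change of variables $u_i' = t - u_{k+1-i}$ reverses the ordered jump times and transforms the product of holding-time exponentials into exactly the product appearing in $P_t^{[k]}(y, \diff x)$. Combining the two identities gives $\pi(\diff x) P_t^{[k]}(x, \diff y) = \pi(\diff y) P_t^{[k]}(y, \diff x)$; summing over $k$ yields the reversibility of $(P_t)$ with respect to $\pi$.

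For the invariance claim in the non-explosive case, $P_t(x, E) = 1$ for all $x$ and $t$, so specializing the reversibility identity to $B = E$ gives $\pi(A) = \int_A P_t(x, E) \pi(\diff x) = \int_E P_t(x, A) \pi(\diff x)$, which is exactly invariance of $\pi$ under $(P_t)$. The main subtlety throughout is the bookkeeping in the two coupled reversals: the iterated detailed balance reverses the sequence of jump destinations, while the time-reversal substitution reverses the sequence of holding times, and the two must be aligned so that each exponential $\eulerE^{-\kappa(z) \tau}$ ends up attached to the correct state $z$ in the reversed trajectory. Once this alignment is verified, the remaining manipulations are routine applications of Fubini and monotone convergence.
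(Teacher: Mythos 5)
The paper does not prove this lemma; it is imported as a black box from \cite[Proposition 2]{serfozo2005reversible}, so there is no internal proof to compare against. Your self-contained argument is correct. The decomposition $P_t^{(n)} = \sum_{k=0}^n P_t^{[k]}$ follows from the recursion \eqref{eq:transition_family_base}--\eqref{eq:transition_family_step} by induction, using the observation that the ``exactly $k$ jumps'' kernels satisfy $P_t^{[k]}(x,A) = \int_0^t \eulerE^{-\kappa(x)s}\int_E P_{t-s}^{[k-1]}(y,A)\,K(x,\diff y)\,\diff s$; then $P_t = \sum_{k\ge 0} P_t^{[k]}$ by monotone convergence. The chain-reversal identity for $\pi(\diff z_0) K(z_0,\diff z_1)\cdots K(z_{k-1},\diff z_k)$ follows by a straightforward induction on $k$ from the detailed-balance hypothesis (apply the base case to the last adjacent pair $\pi(\diff z_k)K(z_k,\diff z_{k+1})$ after commuting it past the already-reversed tail), and the time-reversal substitution $u_i \mapsto t - u_{k+1-i}$ is volume-preserving on the ordered simplex $\{0<u_1<\cdots<u_k<t\}$. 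The alignment you flag as the main subtlety does indeed hold: under the substitution, the holding time of each intermediate state $z_i$ maps bijectively to the holding time of the same state in the reversed trajectory (the set of $(\text{state},\text{duration})$ pairs is identical; only the order of visits is flipped), so the exponential product is invariant under the combined reversal. Summing over $k$ gives $\pi(\diff x)P_t(x,\diff y) = \pi(\diff y)P_t(y,\diff x)$, which is the stated reversibility; plugging $B=E$ and using $P_t(x,E)=1$ in the non-explosive case gives invariance. This is a standard and clean route to the result, arguably more transparent than Serfozo's original argument (which works with the generator and semigroup machinery), and it fits the paper's elementary framing of jump processes.
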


\subsection{Coupling and convergence}
Given probability measures $\pi_1, \pi_2$ on $(E_1, \mathcal{E}_1)$ and $(E_2, \mathcal{E}_2)$, a coupling of $\pi_1, \pi_2$ is a probability measure $\pi$ on the product space $(E_1 \times E_2, \mathcal{E}_1 \otimes \mathcal{E}_2)$ with $\pi(A_1 \times E_2) = \pi_1(A_1)$ for all $A_1 \in \mathcal{E}_1$ and $\pi(E_1 \times A_2) = \pi_2(A_2)$ for all $A_2 \in \mathcal{E}_2$.
A useful aspect of couplings is that they provide a way of bounding the total variation distance between probability distributions.
\begin{lemma}[{\cite[Theorem 2.4]{den2012probability}}] \label{lemma:coupling_ineq}
    Let $\pi_1, \pi_2$ be probability measures on $(E, \mathcal{E})$.
    Suppose $(E, \mathcal{E})$ is such that $\mathcal{E} \otimes \mathcal{E}$ contains the diagonal set $D \coloneqq \{(x,x) \mid x \in E\}$ then, for every coupling $\pi$ of $\pi_1$ and $\pi_2$, it holds that
    \[
        \dtv{\pi_1}{\pi_2} \le 2 \cdot \pi(D^c),
    \]
    where $\dtv{\pi_1}{\pi_2} \coloneqq \sup_{A \in \mathcal{E}} \absolute{\pi_1(A) - \pi_2(A)}$ denotes the total variation distance between $\pi_1$ and $\pi_2$.
\end{lemma}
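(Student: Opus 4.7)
The plan is to fix an arbitrary $A \in \mathcal{E}$ and produce a uniform bound $\absolute{\pi_1(A) - \pi_2(A)} \le 2 \pi(D^c)$; taking the supremum over $A$ will then give the claim. The key structural fact I would exploit is that, because $\pi$ is a coupling, both $\pi_1(A)$ and $\pi_2(A)$ can be written as $\pi$-measures of rectangles in $E \times E$, so their difference is controlled by the $\pi$-mass of those rectangles that force the two coordinates to disagree.

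More concretely, I would first use the marginal property of $\pi$ to rewrite $\pi_1(A) = \pi(A \times E)$ and $\pi_2(A) = \pi(E \times A)$, and then decompose each of these via the (disjoint) identities $A \times E = (A \times A) \cup (A \times A^c)$ and $E \times A = (A \times A) \cup (A^c \times A)$. The common term $\pi(A \times A)$ cancels, leaving
\[
    \pi_1(A) - \pi_2(A) = \pi(A \times A^c) - \pi(A^c \times A).
\]
Both rectangles belong to $\mathcal{E} \otimes \mathcal{E}$ and are contained in $D^c$, since any pair $(x, y)$ with $x \in A$ and $y \in A^c$ (or vice versa) trivially has $x \neq y$. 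The hypothesis $D \in \mathcal{E} \otimes \mathcal{E}$ ensures that $D^c$ is measurable and hence $\pi(D^c)$ is well-defined. By monotonicity of $\pi$, both $\pi(A \times A^c) \le \pi(D^c)$ and $\pi(A^c \times A) \le \pi(D^c)$, and the triangle inequality then yields $\absolute{\pi_1(A) - \pi_2(A)} \le 2 \pi(D^c)$.

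I do not anticipate any genuine obstacle here: the argument is essentially bookkeeping with the marginals of a coupling. The only hypothesis that deserves a brief comment is the measurability of $D$, which is included in the statement precisely so that $\pi(D^c)$ makes sense on an otherwise arbitrary measurable space; for Polish spaces with their Borel $\sigma$-field, as in every later application in this paper (e.g.\ on $\countingMeasures$ or $\finiteCountingMeasures$), measurability of the diagonal is automatic, so this hypothesis imposes no practical restriction.
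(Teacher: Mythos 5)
Your proof is correct and is the standard textbook argument for the coupling inequality; the paper itself gives no proof here (it cites den Hollander's lecture notes), so there is nothing to compare against beyond noting that your argument is the canonical one. One small observation: your own computation already proves the tighter bound $\dtv{\pi_1}{\pi_2}\le\pi(D^{c})$ without the factor of $2$, since $A\times A^{c}$ and $A^{c}\times A$ are \emph{disjoint} subsets of $D^{c}$ (so their $\pi$-masses sum to at most $\pi(D^{c})$), or more simply because the difference of two non-negative numbers is bounded in absolute value by the larger of the two; the final appeal to the triangle inequality needlessly doubles the bound. This is harmless here because the paper states the inequality with the factor $2$, but it is worth noting that with the definition $\dtv{\pi_1}{\pi_2}=\sup_{A}\absolute{\pi_1(A)-\pi_2(A)}$ used in the paper, the sharp coupling inequality has no factor of $2$.
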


The notion of a coupling extends directly to Markov transition families as follows.
\begin{definition} \label{def:coupling_markov}
    Suppose $(P_t)$, $(P'_t)$ are Markov transition families on $(E_1, \mathcal{E}_1)$ and $(E_2, \mathcal{E}_2)$.
    We call a Markov transition family $(Q_t)$ on $(E_1 \times E_2, \mathcal{E}_1 \otimes \mathcal{E}_2)$ a coupling of $(P_t)$, $(P'_t)$ if for all $x \in E_1, y \in E_2$ and $t \in \R_{\ge 0}$ it holds that $Q_t((x, y), \cdot)$ is a coupling of $P_t(x, \cdot)$ and $P'_t(y, \cdot)$.
\end{definition}

We wish to construct couplings between (jump) Markov transition families based on their jump kernels.
Following the ideas in Chen \cite{mufa1986coupling}, we introduce the following definition.
\begin{definition} \label{def:coupling_kernel}
    Let $K_1, K_2$ be jump kernels on $(E_1, \mathcal{E}_1)$ and $(E_2, \mathcal{E}_2)$.
    We call a jump kernel $K$ on $(E_1 \times E_2, \mathcal{E}_1 \otimes \mathcal{E}_2)$ a coupling of $K_1$ and $K_2$ if for all non-negative bounded measurable functions $f_1: E_1 \to \R_{\ge 0}, f_2: E_2 \to \R_{\ge 0}$ and all $x_1 \in E, x_2 \in E_2$ it holds that
    \begin{align*}
        \int_{E_1 \times E_2} (f_1(y_1) - f_1(x_1)) K((x_1, x_2), \diff y_1 \times \diff y_2) = (K_1 f_1)(x_1) - \kappa_1(x_1) \cdot f_1(x_1) \\
        \int_{E_1 \times E_2} (f_2(y_2) - f_2(x_2)) K((x_1, x_2), \diff y_1 \times \diff y_2) = (K_2 f_2)(x_2) - \kappa_2(x_2) \cdot f_2(x_2) .
    \end{align*}
\end{definition}

The following result relates the two notions of a coupling that are given in \Cref{def:coupling_markov,def:coupling_kernel}.
\begin{lemma}[{\cite[Theorems~$13$ and~$25$]{mufa1986coupling}}] \label{lemma:coupling}
    Let $K_1, K_2$ be non-explosive jump kernels on $(E_1, \mathcal{E}_1)$ and $(E_2, \mathcal{E}_2)$.
    If $K$ is a coupling of $K_1, K_2$ in the sense of \Cref{def:coupling_kernel} then $K$ is non-explosive, and the Markov transition family associated to $K$ is a coupling of the Markov transition families associated to $K_1, K_2$ in the sense of \Cref{def:coupling_markov}.
\end{lemma}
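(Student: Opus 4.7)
The statement has two parts: (i) non-explosiveness of the coupling kernel $K$, and (ii) that the associated transition family $(Q_t)$ is a coupling of the transition families $(P_t^1)$, $(P_t^2)$ associated with $K_1$, $K_2$. Both rely on elementary identities obtained from \Cref{def:coupling_kernel} by testing against indicator functions. Plugging $f_1 = \ind{\cdot \in A}$ into the defining identity gives
\begin{equation*}
    K((x_1, x_2), A \times E_2) = K_1(x_1, A) + \ind{x_1 \in A} \left(\kappa(x_1, x_2) - \kappa_1(x_1)\right),
\end{equation*}
and the analogous formula for the second coordinate. Choosing $A = E_1 \setminus \{x_1\}$, exploiting the jump-kernel convention $K((x_1, x_2), \{(x_1, x_2)\}) = 0$, and combining with the symmetric identity by inclusion--exclusion yields the key bound $\kappa(x_1, x_2) \le \kappa_1(x_1) + \kappa_2(x_2)$.

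For part (i), I would apply \Cref{lemma:nonexp_test}. Let $(E_n^1, f_1, c_1)$ and $(E_n^2, f_2, c_2)$ be Lyapunov witnesses for the non-explosiveness of $K_1$ and $K_2$, and set $E_n \coloneqq E_n^1 \times E_n^2$, $f(x_1, x_2) \coloneqq f_1(x_1) + f_2(x_2)$. The bound $\kappa \le \kappa_1 + \kappa_2$ gives $\sup_{E_n} \kappa < \infty$, and $\inf_{z \notin E_n} f(z) \to \infty$ because escaping $E_n$ forces at least one coordinate to leave its $n$-th Lyapunov set. Applying \Cref{def:coupling_kernel} to $f_1$ and $f_2$ separately and then inserting $K_i f_i \le (c_i + \kappa_i) f_i$ gives
\begin{equation*}
    (Kf)(z) \le (c_1 + \kappa(z)) f_1(x_1) + (c_2 + \kappa(z)) f_2(x_2) \le (\max(c_1, c_2) + \kappa(z)) f(z),
\end{equation*}
so the hypotheses of \Cref{lemma:nonexp_test} are satisfied and $K$ is non-explosive.

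For part (ii) the key observation is that \Cref{def:coupling_kernel} says exactly that the formal generator $\Omega f \coloneqq Kf - \kappa f$ of $(Q_t)$ coincides with the formal generator $\Omega_1 f_1 \coloneqq K_1 f_1 - \kappa_1 f_1$ of $(P_t^1)$ on every function depending only on the first coordinate (and symmetrically for the second). Consequently, if $(X_t^1, X_t^2)$ denotes the process associated with $(Q_t)$, then for every bounded measurable $f_1$ the process
\begin{equation*}
    f_1(X_t^1) - f_1(X_0^1) - \int_0^t (\Omega_1 f_1)(X_s^1) \diff s
\end{equation*}
is a martingale under $(Q_t)$. Since $K_1$ is non-explosive, the minimal solution to this martingale problem is unique, identifying the law of $X_t^1$ under $(Q_t)$ started at $(x_1, x_2)$ with $P_t^1(x_1, \cdot)$. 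Applying the same reasoning to the second coordinate yields both marginal identities required by \Cref{def:coupling_markov}.

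The most delicate step is the uniqueness of the martingale problem in part (ii), since the generators are unbounded and the state spaces general. To avoid this technicality, I would execute part (ii) through the recursive construction \eqref{eq:transition_family_base}--\eqref{eq:transition_family_step}: one proves inductively that $\int f_1 \diff Q_t^{(n)}((x_1, x_2), \cdot \times E_2)$ is sandwiched between the $n$-th iterate associated with $K_1$ and $P_t^1 f_1(x_1)$, and the non-explosiveness of $K$ already obtained in part (i), combined with conservativity of $(P_t^1)$, pinches both bounds together in the limit $n \to \infty$. Passing to $f_1 = \ind{\cdot \in A}$ then gives the required marginal equality.
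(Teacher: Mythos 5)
The paper does not prove this lemma; it cites Chen's Theorems~13 and~25 directly. So the evaluation can only be on the internal correctness of your argument, and there are two genuine gaps.

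\textbf{Part (i).} Your derivation $K((x_1,x_2), A \times E_2) = K_1(x_1,A) + \ind{x_1 \in A}(\kappa(x_1,x_2) - \kappa_1(x_1))$ and the resulting bounds $\kappa_1(x_1) \le \kappa(x_1,x_2) \le \kappa_1(x_1)+\kappa_2(x_2)$ are correct. However, you then \emph{assume the existence} of Lyapunov witnesses $(E_n^i, f_i, c_i)$ for $K_1$ and $K_2$. Lemma~\ref{lemma:nonexp_test} in the paper is a sufficient condition for non-explosiveness; the hypothesis of the statement is merely that $K_1, K_2$ are non-explosive, and a converse (non-explosiveness $\Rightarrow$ existence of such a Lyapunov triple on a general state space) is neither stated in the paper nor asserted by you. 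As written, part~(i) of your proof silently invokes an unproven converse, and so does not establish non-explosiveness of $K$ under the stated hypotheses.

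\textbf{Part (ii).} The ``sandwich'' you propose, $P_t^{1,(n)} f_1 \le Q_t^{(n)}(f_1 \circ \mathrm{proj}_1) \le P_t^1 f_1$, fails already at $n=0$: taking $f_1 = \ind{A}$ with $x_1 \in A$ gives $Q_t^{(0)}(f_1\circ\mathrm{proj}_1)(x_1,x_2) = \eulerE^{-\kappa(x_1,x_2)t}$, which by $\kappa \ge \kappa_1$ is $\le \eulerE^{-\kappa_1(x_1)t} = P_t^{1,(0)}f_1(x_1)$, not $\ge$; and the reversed sandwich also fails ($P_t^1 f_1 \not\le Q_t^{(0)}(f_1\circ\mathrm{proj}_1)$ in general). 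In fact only the single one-sided bound $Q_t^{(n)}(f_1\circ\mathrm{proj}_1) \le P_t^1 f_1$ is correct, and its inductive step is itself nontrivial: after substituting the coupling identity into \eqref{eq:transition_family_step}, one must use the backward integral equation $P_t^1 f_1 = \eulerE^{-\kappa_1 t}f_1 + \int_0^t \eulerE^{-\kappa_1 s}(K_1 P_{t-s}^1 f_1)\,\diff s$ to recognize the resulting bound as exactly $P_t^1 f_1$. Once this single inequality is in hand, the conclusion $Q_t((x_1,x_2), A\times E_2) = P_t^1(x_1,A)$ does follow by applying it to $A$ and $A^c$ and using stochasticity of both $Q_t$ (from part~(i)) and $P_t^1$ — so your idea of ``pinching via conservativity'' is the right one, but you do not need (and cannot have) the two-sided sandwich. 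The net effect is that part~(ii) is repairable modulo part~(i), but part~(i) as proposed rests on an unjustified converse to the Lyapunov criterion.
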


A central part of this paper will be to bound the total variation distance of certain jump Markov transition families from their stationary distribution (provided it exists) after a sufficiently large time span $t$.
The following technical lemma will be useful for that.
\begin{lemma} \label{lemma:potential}
    Let $K$ be a jump kernel on $(E, \mathcal{E})$ with jump rate function $\kappa$ and (sub-)Markov transition family $(P_t)$.
    Suppose $f: E \to \R_{\ge 0}$ is a non-negative measurable function such that there exists some $\delta > 0$ such that for all $x \in E$ it holds that
    \[
        (Kf)(x) \le (\kappa(x) - \delta) \cdot f(x) .
    \]
    Then, for all $x \in E$ and $t \in \R_{\ge 0}$, we have 
    \[
        (P_{t}f)(x) \le \eulerE^{- \delta t} f(x) . \qedhere
    \]
\end{lemma}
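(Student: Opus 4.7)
The plan is to exploit the explicit recursive construction of $(P_t)$ as the monotone pointwise limit of the sub-Markov transition families $(P_t^{(n)})$ defined in \eqref{eq:transition_family_base} and \eqref{eq:transition_family_step}. I would prove by induction on $n \ge 0$ that $(P_t^{(n)} f)(x) \le \eulerE^{-\delta t} f(x)$ for all $x \in E$ and $t \in \R_{\ge 0}$, then transfer the bound to the limit via monotone convergence: since $P_t^{(n)}(x, \cdot) \uparrow P_t(x, \cdot)$ setwise by construction, monotone convergence yields $(P_t^{(n)} f)(x) \uparrow (P_t f)(x)$.

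The one preliminary observation that makes the induction go through cleanly is that on $\{f > 0\}$ the hypothesis forces $\kappa(x) \ge \delta$: since $(Kf)(x) \ge 0$ and $(Kf)(x) \le (\kappa(x) - \delta) f(x)$, dividing by $f(x) > 0$ gives $\kappa(x) - \delta \ge 0$. This ensures both that the base case $(P_t^{(0)} f)(x) = \eulerE^{-\kappa(x) t} f(x) \le \eulerE^{-\delta t} f(x)$ holds and that replacing $(Kf)(x)$ by the upper bound $(\kappa(x) - \delta) f(x)$ in what follows preserves the direction of the inequality; on $\{f = 0\}$ both sides of the claim are trivially $0$, so the induction need only be carried out pointwise on $\{f > 0\}$.

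For the inductive step, applying the hypothesis for $n-1$ inside the recursive formula and then pulling the factor $(Kf)(x) \le (\kappa(x) - \delta) f(x)$ out of the double integral gives
\[
    (P_t^{(n)} f)(x) \le \eulerE^{-\kappa(x) t} f(x) + (\kappa(x) - \delta) f(x) \int_0^t \eulerE^{-\kappa(x) s} \eulerE^{-\delta (t-s)} \diff s,
\]
and a direct evaluation of the one-dimensional integral (handling $\kappa(x) = \delta$ and $\kappa(x) > \delta$ separately, both yielding the same final expression after the telescoping cancellation of the $\eulerE^{-\kappa(x) t} f(x)$ term) collapses the right-hand side to exactly $\eulerE^{-\delta t} f(x)$. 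This is a classical Lyapunov-type semigroup estimate, so I do not anticipate a serious obstacle; the only minor points to check along the way are the measurability of $(s, y) \mapsto (P_{t-s}^{(n-1)} f)(y)$ (which propagates through the induction and justifies Fubini inside the recursion) and the routine application of monotone convergence to pass from the $(P_t^{(n)})$ bound to the desired bound for $(P_t)$.
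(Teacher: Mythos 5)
Your proposal is correct and follows essentially the same route as the paper's proof: induct on $n$ to show $(P_t^{(n)} f)(x) \le \eulerE^{-\delta t} f(x)$, with the base case using $\kappa(x) \ge \delta$ on $\{f > 0\}$ and the step substituting the induction hypothesis into the recursion and then bounding $(Kf)(x) \le (\kappa(x)-\delta)f(x)$, finally passing to the limit by monotone convergence. One small imprecision worth noting: the assertion that on $\{f=0\}$ ``both sides of the claim are trivially $0$'' is not immediate for the left side, since $(P_t^{(n)} f)(x) = \int_E f\, \diff P_t^{(n)}(x, \cdot)$ could a priori be positive even if $f(x)=0$; however, as in the paper, the uniform computation handles this automatically because $(Kf)(x) \le (\kappa(x)-\delta)f(x) = 0$ forces the second summand to vanish in that case, so the remark is dispensable rather than a real gap.
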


\begin{proof}
    We show by induction that for all $n \in \N_{0}$ it holds that
    \[
        (P_{t}^{(n)} f)(x) \le \eulerE^{- \delta t} f(x) ,
    \]
    where $(P^{(n)}_{t})$ is as defined in \eqref{eq:transition_family_base} and \eqref{eq:transition_family_step}.
    The desired result then follows from the fact that, for all $t \in \R_{\ge 0}$ and $x \in E$, $P_t^{(n)}(x, \cdot) \to P_t(x, \cdot)$ setwise and monotonically as $n \to \infty$.

    For the base case $n=0$, we have by \eqref{eq:transition_family_base} that
    \[
        (P_{t}^{(0)} f)(x) = \eulerE^{- \kappa(x) t} f(x) .
    \]
    Hence, we need to show $\eulerE^{- \kappa(x) t} f(x) \le \eulerE^{- \delta t} f(x)$ for all $x \in E$.
    Since this holds trivially if $f(x) = 0$, we focus on $x \in  E$ such that $f(x) > 0$.
    Because $f$ is non-negative, our assumptions imply $0 \le (Kf)(x) \le (\kappa(x) - \delta) \cdot f(x)$.
    Thus, we have $\delta \le \kappa(x)$ for all $x \in E$ with $f(x) > 0$ and the desired inequality follows.

    For the induction step, combining \eqref{eq:transition_family_step} and the induction hypothesis yields
    \begin{align*}
        (P_{t}^{(n)} f)(x) &= \eulerE^{-\kappa(x) t} \cdot f(x) + \int_{0}^{t} \eulerE^{- \kappa(x) s} \int_{E} (P_{t-s}^{(n-1)} f)(y) K(x, \diff y) \diff s \\
        &\le \eulerE^{-\kappa(x) t} \cdot f(x) + \int_{0}^{t} \eulerE^{- \kappa(x) s} \int_{E} \eulerE^{-\delta \cdot (t - s)} f(y) K(x, \diff y) \diff s \\
        &= \eulerE^{-\kappa(x) t} \cdot f(x) + \eulerE^{-\delta t} \int_{0}^{t} \eulerE^{- (\kappa(x) - \delta) \cdot s} (Kf)(x) \diff s .
    \end{align*}
    Using $(Kf)(x) \le (\kappa(x) - \delta) \cdot f(x)$, we obtain
    \begin{align*}
        (P_{t}^{(n)} f)(x) &\le \left( \eulerE^{-\kappa(x) t} + \eulerE^{-\delta t} \int_{0}^{t} \eulerE^{- (\kappa(x) - \delta) \cdot s} (\kappa(x) - \delta) \diff s\right) \cdot f(x) \\
        &= \left( \eulerE^{-\kappa(x) t} + \eulerE^{-\delta t} \left(1 - \eulerE^{-(\kappa(x) - \delta) \cdot t} \right) \right) \cdot f(x) \\
        &= \eulerE^{-\delta t} f(x) ,
    \end{align*}
    which concludes the induction step and proves the claim.
\end{proof}

We can now combine \Cref{lemma:potential} with a suitable coupling of a non-explosive jump kernel to derive information about the speed of convergence of the associated Markov transition family to its stationary distribution.

\begin{lemma} \label{lemma:mixing}
    Let $K$ be a non-explosive jump kernel on $(E, \mathcal{E})$ and assume the associated Markov transition group $(P_t)$ has a stationary distribution $\pi$.
    Suppose there is a coupling $\Tilde{K}$ of $K$ with itself (in the sense of \Cref{def:coupling_kernel}) and a non-negative measurable function $f: E \times E \to \R_{\ge 0}$ such that the following hold:
    \begin{enumerate}
        \item There exists some $c > 0$ such that $f(x, y) \ge c$ if and only if $x \neq y$.
        \label{item:mixing:threshold}
        \item There exists some $\delta > 0$ such that for all $x, y \in E$
        \[
            (\Tilde{K} f)(x, y) \le (\Tilde{\kappa}(x, y) - \delta) \cdot f(x, y) ,
        \]
        where $\Tilde{\kappa}$ is the rate function of $\Tilde{K}$. \label{item:mixing:contraction}
    \end{enumerate}
    Then, for all $x \in E$ and all $t \in \R_{\ge 0}$, it holds that
    \[
        \dtv{P_t (x, \cdot)}{\pi} \le \frac{1}{c} \cdot \eulerE^{- \delta t} \int_{E} f(x, y) \pi(\diff y). \qedhere
    \]
\end{lemma}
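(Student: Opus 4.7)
The plan is to combine the coupling machinery with the contraction estimate from \Cref{lemma:potential}, and then use stationarity of $\pi$ to pass from pointwise convergence of $P_t(x,\cdot)$ toward $P_t(y,\cdot)$ to convergence toward $\pi$.

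First, I would invoke \Cref{lemma:coupling}: since $K$ is non-explosive and $\Tilde{K}$ is a coupling of $K$ with itself in the sense of \Cref{def:coupling_kernel}, $\Tilde{K}$ is itself non-explosive, and the transition family $(\Tilde{P}_t)$ associated to $\Tilde{K}$ is a coupling of $(P_t)$ with itself. That is, for every $(x,y) \in E \times E$ and every $t \ge 0$, $\Tilde{P}_t((x,y), \cdot)$ is a coupling of $P_t(x, \cdot)$ and $P_t(y, \cdot)$. Next, condition \eqref{item:mixing:contraction} is precisely the hypothesis needed to apply \Cref{lemma:potential} to $\Tilde{K}$ and $f$, yielding
\[
    (\Tilde{P}_t f)(x,y) \le \eulerE^{-\delta t} f(x,y) \qquad \text{for all } (x,y) \in E \times E,\ t \ge 0.
\]

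With this in hand, I would use condition \eqref{item:mixing:threshold} in the form of a Markov-type inequality. Since $f(u,v) \ge c \cdot \ind{u \neq v}$ pointwise, setting $D = \{(u,u) : u \in E\}$ gives
\[
    \Tilde{P}_t((x,y), D^c) \le \frac{1}{c} \int_{E \times E} f(u,v)\, \Tilde{P}_t((x,y), \diff u \times \diff v) = \frac{1}{c} (\Tilde{P}_t f)(x,y) \le \frac{1}{c}\, \eulerE^{-\delta t} f(x,y).
\]
Combining this with the coupling inequality \Cref{lemma:coupling_ineq} applied to the coupling $\Tilde{P}_t((x,y), \cdot)$ of $P_t(x,\cdot)$ and $P_t(y,\cdot)$ yields a pointwise bound of the form $\dtv{P_t(x,\cdot)}{P_t(y,\cdot)} \le \frac{C}{c}\, \eulerE^{-\delta t} f(x,y)$ for an appropriate constant $C$.

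Finally, I would use stationarity of $\pi$ to introduce $\pi$ on the right-hand side. Since $\pi(A) = \int_E P_t(y, A)\, \pi(\diff y)$, convexity of total variation (the triangle inequality under an integral) gives
\[
    \dtv{P_t(x,\cdot)}{\pi} \le \int_E \dtv{P_t(x,\cdot)}{P_t(y,\cdot)}\, \pi(\diff y) \le \frac{1}{c}\, \eulerE^{-\delta t} \int_E f(x,y)\, \pi(\diff y),
\]
which is the desired bound. The argument is essentially a chain of invocations of earlier lemmas, so the principal obstacle is bookkeeping: verifying that the measurability and non-explosivity hypotheses propagate correctly through $\Tilde{K}$, and, as noted, tracking the precise constant arising from the coupling inequality to land on the stated factor $1/c$.
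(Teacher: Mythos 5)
Your proposal is correct and follows essentially the same route as the paper: invoke \Cref{lemma:coupling} to transfer non-explosiveness and the coupling property to $(\Tilde{P}_t)$, apply \Cref{lemma:potential} with condition \eqref{item:mixing:contraction} to get the exponential decay $(\Tilde{P}_t f)(x,y) \le \eulerE^{-\delta t} f(x,y)$, and then combine Markov's inequality with the coupling inequality and stationarity of $\pi$. The only genuine difference is the order of operations at the end: you bound $\dtv{P_t(x,\cdot)}{P_t(y,\cdot)}$ pointwise and then average over $y \sim \pi$ using the joint convexity of total variation, whereas the paper instead pushes $\delta_x \otimes \pi$ forward under $\Tilde{P}_t$ to produce a single coupling $Q_{x,t}$ of $P_t(x,\cdot)$ with $\pi$ (using stationarity in the Fubini step) and applies the coupling inequality once to $Q_{x,t}$. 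The two versions are equivalent and give the same bound; your variant avoids explicitly constructing $Q_{x,t}$ at the cost of one extra appeal to convexity of $\absolute{\cdot}_{\mathrm{tv}}$.

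One loose end worth settling, since you flagged it yourself: if you apply \Cref{lemma:coupling_ineq} exactly as stated, you pick up a factor of $2$ and land on $\frac{2}{c}$, not $\frac{1}{c}$. The resolution is that for the definition used in the paper, $\dtv{\pi_1}{\pi_2} = \sup_{A \in \mathcal{E}} \absolute{\pi_1(A) - \pi_2(A)}$, the coupling inequality actually holds without the $2$: writing $\gamma$ for any coupling, one has $\pi_1(A) - \pi_2(A) = \gamma((A \times E) \cap D^c) - \gamma((E \times A) \cap D^c)$ because the diagonal contributions cancel, so $\absolute{\pi_1(A) - \pi_2(A)} \le \gamma(D^c)$. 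The paper's own proof of the lemma silently uses this sharper form ($\dtv{P_t(x,\cdot)}{\pi} \le Q_{x,t}(D^c)$); the factor $2$ in the statement of \Cref{lemma:coupling_ineq} reflects a different normalization of total variation in the cited source. So your constant tracking comes out correctly once this is noted, and the rest of the argument is sound.
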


\begin{proof}
    By \Cref{lemma:coupling} we know that $\Tilde{K}$ is non-explosive and that the associated Markov transition family $(Q_t)$ is a coupling of $(P_t)$ with itself in the sense of \Cref{def:coupling_markov}.
    Further, by (\ref{item:mixing:contraction}) and \Cref{lemma:potential}, we have for all $x, y \in E$ and all $t \in \R_{\ge 0}$ that $(Q_t f)(x, y) \le \eulerE^{- \delta t} f(x, y)$.

    Next, for $x \in E$, $t \in \R_{\ge 0}$ and $A \in \mathcal{E} \otimes \mathcal{E}$ define
    \[
        Q_{x, t}(A) \coloneqq \int_{E \times E} Q_t(z, A) (\delta_x \otimes \pi)(\diff z),
    \]
    where $\delta_x \otimes \pi$ is the product probability measure of $\delta_x$ and $\pi$.
    Note that for all $x$ and $t$ it holds that $Q_{x, t}$ is a probability measure on $(E \times E, \mathcal{E} \otimes \mathcal{E})$. 
    Using Fubini's theorem and the fact that $(Q_t)$ is a coupling of $(P_t)$ with itself, we obtain for all $A \in \mathcal{E}$
    \begin{align*}
         Q_{x, t}(A \times E) 
         &= \int_E \int_E Q_t((z_1, z_2), A \times E) \delta_x(\diff z_1) \pi(\diff z_2) \\
         &= \int_E \int_E P_t(z_1, A) \delta_x(\diff z_1) \pi(\diff z_2) \\
         &= \int_E P_t(z_1, A) \delta_x (\diff z_1) \\
         & = P_t(x, A).
    \end{align*}
    Further, since $\pi$ is invariant with respect to $(P_t)$, we have analogously
    \[
        Q_{x, t}(E \times A) 
        = \int_E P_t(z_2, A) \pi(\diff z_2)
        = \pi(A) . 
    \]
    This shows that $Q_{x, t}$ is a coupling of $P_t(x, \cdot)$ and $\pi$.
    Note that by (\ref{item:mixing:threshold}) we may express the diagonal as $D = \{(x, x) \mid x \in E\} = \{(x, y) \in E \times E \mid f(x, y) < c\}$ and, since $f$ is measurable, $D \in \mathcal{E} \otimes \mathcal{E}$.
    By \Cref{lemma:coupling_ineq}, it suffices to bound $Q_{x, t} (D^c)$.
    Applying Markov's inequality, we get
    \[
         \dtv{P_t(x, \cdot)}{\pi} 
         \le Q_{x, t}(D^c) \le \frac{1}{c} \int_{E \times E} f(z) Q_{x, t}(\diff z)
         = \frac{1}{c} \int_{E \times E} (Q_t f)(z) (\delta_x \otimes \pi) (\diff z).
    \]
    Finally, since $(Q_t f)(z) \le \eulerE^{- \delta t} f(z)$, applying Fubini's theorem yields
    \[
        \dtv{P_t(x, \cdot)}{\pi} 
        \le \frac{1}{c} \cdot \eulerE^{- \delta t} \int_{E \times E} f(z) (\delta_x \otimes \pi) (\diff z)
        = \frac{1}{c} \cdot \eulerE^{- \delta t} \int_{E} f(x, y) \pi (\diff y) ,
    \]
    which concludes the proof.
\end{proof}

\subsection{Markov process associated with a jump kernel}
We proceed by discussing in what sense we can associate a stochastic process (more precisely a Markov jump process) on $(E, \mathcal{E})$ with a given jump kernel.
By a stochastic process we mean a tuple $(\Omega, \filtration, \Pr, (X_t)_{t \in \R_{\ge 0}})$ where $(\Omega, \filtration, \Pr)$ is a probability space and $(X_t)_{t \in \R_{\ge 0}}$ is a collection of random variables $X_t: \Omega \to E$.
Since we will always use $\R_{\ge 0}$ as the index set for $(X_t)$, we will drop it from now on to simplify notation (the same will hold for filtrations as introduced below).
For every $\omega \in \Omega$, we call the function $t \mapsto X_t(\omega)$ the \emph{sample path} of $\omega$.
We call a stochastic process \emph{measurable} if the map $(t, \omega) \mapsto X_t(\omega)$ is measurable with respect to $\borel_{\R_{\ge 0}} \otimes \filtration$ and $\mathcal{E}$, where $\borel_{\R_{\ge 0}}$ is the Borel $\sigma$-field on $\R_{\ge 0}$.

A \emph{filtration} on $(\Omega, \filtration, \Pr, (X_t))$ is a sequence of $\sigma$-fields $(\filtration_t)$ such that $\filtration_s \subseteq \filtration_t \subseteq \filtration$ for all $s, t \in \R_{\ge 0}$, $s \le t$. 
We call a random variable $T: \Omega \to \extended{\R_{\ge 0}}$ a stopping time with respect to a filtration $(\filtration_t)$ if, for all $t \in \R_{\ge 0}$, the event $\{T \le t\}$ is measurable with respect to $\filtration_t$.
Given a stopping time $T$ with respect to some filtration $(\mathcal{F}_t)$, we define $\mathcal{F}_T \coloneqq \{A \in \mathcal{F} \mid \forall t \in \R_{\ge 0}: A \cap \{T \le t\} \in \mathcal{F}_t\}$.
Further, say that a stochastic process is \emph{progressive} with respect to a filtration $(\filtration_t)$ if for all $t \in \R_{\ge 0}$ it holds that $(s, \omega) \mapsto X_s(\omega)$ as a function from $[0, t] \times \Omega$ into $E$ is measurable with respect to $\borel_{[0, t]} \otimes \filtration_t$ and $\mathcal{E}$. 
A particularly important filtration is the \emph{natural filtration} $(\filtration^{X}_{t})$, which is given by $\filtration^{X}_t \coloneqq \sigma(X_s \mid s \in [0, t])$.

Finally, given a probability measure $\mu$ and a Markov transition family $(P_t)$ on $(E, \mathcal{E})$, we say that $(\Omega, \filtration, \Pr,(X_t))$ is a \emph{Markov process} with \emph{initial distribution} $\mu$ and \emph{transition function} $(P_t)$ if: 
\begin{enumerate}
    \item for all $A \in \mathcal{E}$ we have $\Pr[X_0 \in A] = \mu(A)$, and
    \item for all $t, s \in \R_{\ge 0}$ and all $A \in \mathcal{E}$ it holds that $P_t(X_s, A)$ is a version of the conditional probability $\Pr[X_{s + t} \in A \mid \filtration^{X}_{s}]$.
\end{enumerate}
Note that, for every $t \in \R_{\ge 0}$ and $A \in \mathcal{E}$, it holds that $x \mapsto P_t(x, A)$ is measurable. Hence, it also holds that $\omega \mapsto P_t(X_s(\omega), A)$ is measurable with respect to $\sigma(X_s)$ and thus 
\[
    \Pr[X_{s + t} \in A \mid \filtration^{X}_{s}] = \Pr[X_{s + t} \in A \mid X_s] \text{\hspace{3em} $\Pr$-almost surely.}
\]
This is usually referred to as the Markov property.

It turns out that, given a non-explosive jump kernel $\jumpKernel$ and associated transition family $(P_t)$, we can construct a particularly nice version of the Markov process with transition function $(P_t)$ for any given initial distribution (see for example \cite[§1.12]{BG} and \cite{jansen2020markov}).

\begin{theorem}
    \label{thm:construction}
    Let $K$ be a non-explosive jump kernel and let $\mu$ be a probability measure on $(E, \mathcal{E})$.
    There is a stochastic process $(\Omega, \mathcal{F}, \Pr_{\mu}, (X_t))$ with values in $(E, \mathcal{E})$ such that:
    \begin{enumerate}[series=conditions]
        \item The sample paths of $(X_t)$ are piecewise constant.
        That is, for every $\omega \in \Omega$ and $s \in \R_{\ge 0}$ there is some $h > 0$ such that the map $t \mapsto X_t(\omega)$ is constant on $[s, s+h)$. \label{thm:construction:sample_paths}
        \item For every $A \in \mathcal{E}$, $T_A \coloneqq \inf\{t \in \R_{\ge 0} \mid X_t \in A\}$ is a stopping time with respect to $(\mathcal{F}^{X}_{t})$.
        \item $(\Omega, \mathcal{F}, \Pr_{\mu}, (X_t))$ is a Markov process with initial distribution $\mu$ and transition function $(P_t)$. \label{thm:construction:Markov}
        \item The process satisfies the following version of the \emph{strong Markov property}: Suppose $T$ is a stopping time with respect to $(\mathcal{F}^{X}_t)$, then for every $t \in \R_{\ge 0}$ and $A \in \mathcal{E}$
        \[
            \Pr_{\mu}[T < \infty, X_{T+t} \in A \mid \mathcal{F}^{X}_{T}] = \ind{T<\infty} \cdot P_{t}(X_T, A) \text{ $\Pr_{\mu}$-a.s.}
        \] 
        \label{thm:construction:strongMarkov}
    \end{enumerate}
    Moreover, on the same probability space $(\Omega, \mathcal{F}, \Pr_{\mu})$ we can construct random variables $(Z_n)_{n \in \N_0}$ with values in $(E, \mathcal{E})$ and $(\tau_n)_{n \in \N_0}$ with values in $\extended{\R_{> 0}}$ such that 
    \begin{enumerate}[resume=conditions]
        \item It holds that $\tau_0 = 0$ and $n \mapsto \tau_n$ is strictly increasing. \label{thm:construction:jumps}
        \item $(Z_n)$ is a Markov chain on $(E, \mathcal{E})$ with transition kernel $\Pi$ (see \eqref{eq:transition_kernel}) and initial law $\mu$. \label{thm:construction:discrete}
        \item For all $n \in \N_0$ and $t \in \R_{\ge 0}$ with $\tau_n \le t < \tau_{n+1}$ it holds that $X_t = Z_n$. \label{thm:construction:discreteToContinuous}
    \end{enumerate}
\end{theorem}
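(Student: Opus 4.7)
The plan is to carry out the classical ``jump chain plus exponential holding times'' construction. On a suitable probability space $(\Omega, \mathcal{F}, \Pr_{\mu})$, I would first invoke the Ionescu--Tulcea theorem to construct a discrete-time Markov chain $(Z_n)_{n \in \N_0}$ on $(E, \mathcal{E})$ with initial distribution $\mu$ and transition kernel $\Pi$ from \eqref{eq:transition_kernel}, together with an independent i.i.d.\ sequence $(U_n)_{n \in \N_0}$ of $\mathrm{Exp}(1)$ random variables. I would then set $\sigma_n \coloneqq U_n / \kappa(Z_n)$ when $\kappa(Z_n) > 0$ and $\sigma_n \coloneqq \infty$ otherwise, $\tau_0 \coloneqq 0$, $\tau_{n+1} \coloneqq \tau_n + \sigma_n$, and define $X_t \coloneqq Z_n$ on $\{\tau_n \le t < \tau_{n+1}\}$, sending $X_t$ to an auxiliary cemetery point (to be excluded a posteriori) for $t \ge \tau_\infty \coloneqq \lim_n \tau_n$.

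With this setup, properties \ref{thm:construction:sample_paths}, \ref{thm:construction:jumps}, \ref{thm:construction:discrete}, and \ref{thm:construction:discreteToContinuous} are essentially immediate: sample paths are piecewise constant between consecutive $\tau_n$'s; $\sigma_n > 0$ almost surely whenever $\kappa(Z_n) > 0$ (and otherwise $Z_n$ is absorbed), so $n \mapsto \tau_n$ is strictly increasing; and $(Z_n)$ is the jump chain by construction. Measurability of $T_A = \inf\{t : X_t \in A\}$ with respect to $(\mathcal{F}^X_t)$ follows by a standard argument using the piecewise-constant, right-continuous structure of the sample paths (reducing $\{T_A \le t\}$ to a countable union over the jump indices $n$ with $\tau_n \le t$).

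The main obstacle is verifying \ref{thm:construction:Markov}, namely that the one-dimensional distributions of the constructed process agree with $(P_t)$ built recursively via \eqref{eq:transition_family_base}--\eqref{eq:transition_family_step}. I would prove by induction on $n$ the identity
\[
    P^{(n)}_t(x, A) = \Pr_x[X_t \in A, \, \tau_{n+1} > t] ,
\]
where $\Pr_x$ denotes the law of $(X_t)$ started from $\delta_x$. The base case $n = 0$ records that no jump has occurred by time $t$, contributing $\eulerE^{-\kappa(x) t} \delta_x(A)$, matching \eqref{eq:transition_family_base}. The induction step conditions on the first jump time $\sigma_0 = s$ (exponentially distributed with rate $\kappa(x)$) and the post-jump location $y$ (distributed as $\Pi(x, \cdot) = \kappa(x)^{-1} K(x, \cdot)$), and then applies the induction hypothesis to the shifted process starting from $y$; this exactly reproduces \eqref{eq:transition_family_step}. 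Summing over $n$ and applying monotone convergence yields $P_t(x, A) = \Pr_x[X_t \in A, \, \tau_\infty > t]$. Non-explosiveness of $K$ gives $P_t(x, E) = 1$, forcing $\Pr_x[\tau_\infty > t] = 1$ for every $t$, so that $\tau_\infty = \infty$ almost surely and $\Pr_x[X_t \in A] = P_t(x, A)$. The full Markov property $\Pr_\mu[X_{s+t} \in A \mid \mathcal{F}^X_s] = P_t(X_s, A)$ then follows by conditioning on $(Z_n, \tau_n)$ up to the last jump before time $s$ and invoking the memoryless property of the exponential to show that, conditional on $X_s = y$, the process $(X_{s+r})_{r \ge 0}$ is again a version of the jump construction started from $y$, independent of $\mathcal{F}^X_s$.

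Finally, the strong Markov property \ref{thm:construction:strongMarkov} can be deduced from the ordinary Markov property via the standard approximation scheme: set $T_k \coloneqq 2^{-k} \lceil 2^k T \rceil$, apply the Markov property at the countably many deterministic values of $T_k$ on $\{T_k = j \cdot 2^{-k}\} \cap \{T < \infty\}$, and pass to the limit $k \to \infty$, using the right-continuity of sample paths together with the fact that $y \mapsto P_t(y, A)$ is the pointwise limit of the measurable functions $y \mapsto P_t^{(n)}(y, A)$, so the right-hand side $P_t(X_{T_k}, A)$ converges to $P_t(X_T, A)$ along the dyadic sequence. This gives the claimed almost sure identity on $\{T < \infty\}$ and completes the construction.
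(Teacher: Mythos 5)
The paper does not give its own proof of \Cref{thm:construction}; it states it as a known result and points to Blumenthal and Getoor~\cite[\S 1.12]{BG} and Jansen~\cite{jansen2020markov}. Your proposal is the standard jump-chain-plus-exponential-holding-times construction, which is precisely what those references carry out, and the overall plan is sound: properties~\ref{thm:construction:sample_paths}, \ref{thm:construction:jumps}, \ref{thm:construction:discrete} and~\ref{thm:construction:discreteToContinuous} are immediate; the induction $P_t^{(n)}(x, A) = \Pr_x[X_t \in A,\, \tau_{n+1} > t]$ is the right way to match the constructed process with the recursively defined transition family; non-explosiveness then eliminates the cemetery state; and the dyadic approximation $T_k = 2^{-k}\lceil 2^k T\rceil$ is the right device for the strong Markov property.

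Two points deserve tightening. First, for $T_A$ to be a stopping time with respect to the \emph{raw} natural filtration $(\mathcal{F}^X_t)$, you cannot directly decompose $\{T_A \le t\}$ using the externally constructed $\tau_n, Z_n$, since these are $\mathcal{F}$-measurable but not a priori $\mathcal{F}^X_t$-measurable. One must instead use jump times $\tilde\tau_n$ defined intrinsically from the sample path (reducible to a countable test over rational times thanks to piecewise-constancy and right-continuity). Because $K(x,\{x\}) = 0$ forces $\Pi(x,\{x\}) = 0$, the external and intrinsic jump chains agree almost surely, but the measurability claim must go through the intrinsic ones---or the process should be built directly on canonical path space as in~\cite{BG}. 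Second, in the strong Markov step, the convergence $P_t(X_{T_k}, A) \to P_t(X_T, A)$ does not follow from $y \mapsto P_t(y,A)$ being a pointwise limit of measurable functions; measurability gives no continuity along $X_{T_k}$. What makes the limit work is again piecewise-constancy of the paths: for every $\omega$ there is $k_0(\omega)$ such that $X_{T_k}(\omega) = X_T(\omega)$ and $X_{T_k+t}(\omega) = X_{T+t}(\omega)$ for all $k \ge k_0(\omega)$, so both sides are eventually constant in $k$ and no regularity of $P_t(\cdot,A)$ is needed. With these two repairs your argument is complete and matches the cited construction.
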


We will make use of the following two lemmas about hitting times for the process that is given by \Cref{thm:construction}.
The first lemma formalizes the following intuition: if, for two sets $A, B \in \mathcal{E}$, the rate of jumping from outside of $A \cup B$ into $B$ is $0$, then the process must (almost surely) go through $A$ to ever reach $B$.

\begin{lemma} \label{lemma:separator_set}
    Let $K$ be a non-explosive jump kernel and $\mu$ be a probability distribution on $(E, \mathcal{E})$.
    Let $(\Omega, \mathcal{F}, \Pr_{\mu}, (X_t))$ be the Markov process associated with $K$ and $\mu$ as above. 
    Suppose $A, B \in \mathcal{E}$ are such that $\mu(B) = 0$ and $K(x, B) = 0$ for all $x \notin A \cup B$, and define $T_A \coloneqq \inf\{t \in \R_{\ge 0} \mid X_t \in A\}$ and $T_B \coloneqq \inf\{t \in \R_{\ge 0} \mid X_t \in B\}$. 
    It holds that $\Pr_{\mu}[T_B \le T_A, T_B < \infty] = 0$.
\end{lemma}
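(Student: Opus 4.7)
The plan is to reduce everything to the embedded discrete-time jump chain $(Z_n)$ with jump times $(\tau_n)$ provided by Theorem~\ref{thm:construction}. The key observation is that, because the sample paths are piecewise constant and $(\tau_n)$ is strictly increasing, hitting $B$ in continuous time forces the discrete chain $(Z_n)$ to hit $B$ as well, and at that moment the previous state $Z_{n-1}$ must lie outside $A \cup B$ if we are on the event $\{T_B \le T_A\}$. But then the transition $Z_{n-1} \to Z_n$ uses the kernel $\Pi(Z_{n-1}, \cdot)$ which puts mass $0$ on $B$ by our assumption $K(x, B) = 0$ for $x \notin A \cup B$.

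\textbf{Reduction to the discrete chain.} I would begin by working on the event $\{T_B < \infty\}$ and defining $n^\ast \coloneqq \inf\{ n \in \N_0 \mid Z_n \in B\}$. Since $\mu(B) = 0$, property \ref{thm:construction:discrete} gives $\Pr_\mu(Z_0 \in B) = \mu(B) = 0$, so $n^\ast \ge 1$ almost surely. Using property \ref{thm:construction:discreteToContinuous}, any $t \in [\tau_n, \tau_{n+1})$ satisfies $X_t = Z_n$; combining this with the piecewise-constant nature of sample paths and the strict monotonicity of $(\tau_n)$ (property \ref{thm:construction:jumps}), it follows that $T_B < \infty$ implies $T_B = \tau_{n^\ast}$ and $Z_{n^\ast} \in B$. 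On the further event $\{T_B \le T_A\}$, every state $Z_k$ with $k < n^\ast$ satisfies $X_{\tau_k} = Z_k \notin A$ (else $T_A \le \tau_k < \tau_{n^\ast} = T_B$), and by minimality of $n^\ast$ also $Z_k \notin B$. Thus
\[
    \{T_B \le T_A, T_B < \infty\} \;\subseteq\; \bigcup_{n \ge 1} \Omega_n, \qquad \Omega_n \coloneqq \{Z_0, \ldots, Z_{n-1} \notin A \cup B,\; Z_n \in B\}.
\]

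\textbf{Bounding each $\Omega_n$.} For $x \notin A \cup B$, the definition \eqref{eq:transition_kernel} of $\Pi$ gives $\Pi(x, B) = K(x, B)/\kappa(x) = 0$ when $\kappa(x) > 0$, and $\Pi(x, B) = \delta_x(B) = 0$ when $\kappa(x) = 0$ since $x \notin B$. Combining this with the fact that $(Z_n)$ is a Markov chain with transition kernel $\Pi$ and initial law $\mu$ (property \ref{thm:construction:discrete}), and unrolling by conditioning,
\[
    \Pr_\mu(\Omega_n) = \int_{(A \cup B)^c} \!\!\!\cdots \int_{(A \cup B)^c} \Pi(z_{n-1}, B)\, \Pi(z_{n-2}, \diff z_{n-1}) \cdots \Pi(z_0, \diff z_1)\, \mu(\diff z_0) = 0,
\]
since the innermost integrand vanishes. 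A countable union bound then yields $\Pr_\mu(T_B \le T_A, T_B < \infty) \le \sum_{n \ge 1} \Pr_\mu(\Omega_n) = 0$, as desired.

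\textbf{Main obstacle.} The proof is conceptually straightforward; the only delicate point is making the reduction $\{T_B < \infty\} \Rightarrow \{T_B = \tau_{n^\ast}, Z_{n^\ast} \in B\}$ fully rigorous. This requires combining pieces \ref{thm:construction:sample_paths}, \ref{thm:construction:jumps}, and \ref{thm:construction:discreteToContinuous} of Theorem~\ref{thm:construction}, and implicitly uses that $\tau_n \to \infty$ almost surely (a consequence of non-explosiveness, i.e.\ $P_t(x, E) = 1$). Everything else is a one-line computation with the kernel $\Pi$.
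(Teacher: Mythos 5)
Your proof is correct and takes essentially the same route as the paper: reduce the continuous-time hitting event to the embedded discrete chain $(Z_n)$ via \Cref{thm:construction}~(\ref{thm:construction:discreteToContinuous}), observe that $\Pi(x,B)=0$ whenever $x \notin A \cup B$, handle $\{Z_0 \in B\}$ via $\mu(B)=0$, and finish with a union bound. The paper bounds by the slightly weaker event $\{Z_m \in B, Z_{m-1}\notin A\cup B\}$ while you track the full pre-$n^\ast$ history, but this is a cosmetic difference; both arguments are sound.
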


\begin{proof}
    Note that by \Cref{thm:construction} (\ref{thm:construction:discreteToContinuous}) it holds that
    \begin{align*}
        \Pr_{\mu}[T_B \le T_A, T_B < \infty]
        &\le \Pr_{\mu}[\exists m \in \N_0: \tau_m < \infty, Z_m \in B, \forall k \le m: Z_k \notin A \cup B] \\
        &\le \Pr_{\mu}[\exists m \in \N: Z_m \in B, Z_{m-1} \notin A \cup B] + \Pr_{\mu}[Z_0 \in B] .
    \end{align*}
    By \Cref{thm:construction} (\ref{thm:construction:discrete}) and the assumption that $\mu(B) = 0$ it holds that $\Pr_{\mu}[Z_0 \in B] = 0$ and
    \[
        \Pr_{\mu}[Z_m \in B, Z_{m-1} \notin A \cup B] = \int_{(A \cup B)^c} \int_{(A \cup B)^c} \Pi(y, B) \Pi^{m-1}(x, \diff y) \mu(\diff x).
    \]
    As we assume $K(y, B) = 0$ for $y \notin A \cup B$ it holds that inside the integral $\Pi(y, B) = 0$ and taking a union bound over $m \in \N$ concludes the proof.
\end{proof}

The second lemma states that, if we consider a sequence of regions $A_1, \dots, A_k \in \mathcal{E}$, each of which we enter with rate at most $\rho$, then the probability of hitting those sets in order before some time $t$ is upper-bounded by the probability that a Poisson random variable with rate $\rho t$ is at least $k$.
Before proving this, we show the following helpful bound on the transition family.
\begin{claim} \label{claim:transition_bound}
    Let $K$ be a jump kernel on $(E, \mathcal{E})$ with associated (sub-)Markov transition family $(P_t)$.
    Let $A \in \mathcal{E}$ be such that there is some $\rho \in \R_{\ge 0}$ such that $K(x, A)$ for $x \notin A$.
    For all $t \in \R_{\ge 0}$ and $x \notin A$ it holds that $P_t(x, A) \le \rho \cdot t$.
\end{claim}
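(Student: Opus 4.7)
The plan is to prove the bound by induction on the iterated transition families $(P_t^{(n)})$ defined in \eqref{eq:transition_family_base} and \eqref{eq:transition_family_step}, and then pass to the monotone limit $n \to \infty$. The assumption in the claim should be read as $K(x, A) \le \rho$ for all $x \notin A$ (likely a typo in the statement).

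For the base case $n = 0$, observe that for $x \notin A$ we have $P_t^{(0)}(x, A) = \eulerE^{-\kappa(x)t} \delta_x(A) = 0 \le \rho t$. For the induction step, fix $x \notin A$ and use that the $\delta_x(A)$ term again vanishes, so
\[
    P_t^{(n)}(x, A) = \int_0^t \eulerE^{-\kappa(x) s} \int_E P_{t-s}^{(n-1)}(y, A) K(x, \diff y) \diff s.
\]
Then split the inner integral over $y$ as $\int_A + \int_{A^c}$. On $A$, I bound $P_{t-s}^{(n-1)}(y, A) \le 1$ using sub-stochasticity, yielding a contribution bounded by $K(x, A) \le \rho$. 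On $A^c$, I apply the induction hypothesis to get $P_{t-s}^{(n-1)}(y, A) \le \rho(t-s)$, which together with $K(x, A^c) \le \kappa(x)$ gives a contribution bounded by $\rho(t-s)\kappa(x)$. Combining,
\[
    P_t^{(n)}(x, A) \le \int_0^t \eulerE^{-\kappa(x) s} \bigl(\rho + \rho(t-s)\kappa(x)\bigr) \diff s.
\]

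Writing $k = \kappa(x)$, a direct computation of the two elementary integrals (handling $k = 0$ separately, in which case the bound $0 \le \rho t$ is trivial) gives
\[
    \rho \cdot \frac{1 - \eulerE^{-kt}}{k} + \rho k \left( \frac{t}{k} - \frac{1}{k^2} + \frac{\eulerE^{-kt}}{k^2} \right) = \rho t,
\]
so the induction step closes exactly at the target $\rho t$. Finally, since $P_t^{(n)}(x, A) \uparrow P_t(x, A)$ as $n \to \infty$ by monotone convergence, the claim follows.

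There is no real obstacle here: the result is essentially a first-jump estimate, and the key observation is the convenient cancellation in the integral that makes the induction preserve the bound $\rho t$ exactly. The only mild subtlety is remembering to use both sub-stochasticity (on $A$) and the induction hypothesis (on $A^c$), together with the obvious bound $K(x, A^c) \le \kappa(x)$, rather than trying to bound everything by $\rho$.
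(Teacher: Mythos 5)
Your proof is correct and follows essentially the same route as the paper: induction on the iterated sub-Markov families, splitting the inner integral into $A$ (bounded by sub-stochasticity and $K(x,A)\le\rho$) and $A^c$ (bounded by the induction hypothesis and $K(x,A^c)\le\kappa(x)$), then computing the elementary integral to recover $\rho t$ exactly. You also correctly identified the typo in the statement ($K(x,A)\le\rho$ for $x\notin A$).
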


\begin{proof}
    We show that the statement holds for each of the recursively defined sub-Markov families $(P^{(n)}_t)$ (see \eqref{eq:transition_family_base} and \eqref{eq:transition_family_step}) via induction on $n$.
    The result follows then from taking $n \to \infty$.
    For $n = 0$, the statement is trivial as $P^{(0)}_t(x, A) = \eulerE^{- \kappa(x) \cdot t} \delta_{x}(A) = 0$ for all $x \notin A$.
    Suppose the statement holds for some $n \ge 0$, we use the fact that $P_t^{(n)}$ is sub-stochastic to obtain
    \begin{align*}
        P_t^{(n+1)}(x, A) 
        &= P_t^{(0)}(x, A) + \int_0^t \eulerE^{- \kappa(x) \cdot s} \int_{E} P_{t-s}^{(n)}(x, A) K(x, \diff y) \diff s \\
        &\le \int_0^t \eulerE^{- \kappa(x) \cdot s} \cdot [\rho \cdot (t - s) \cdot K(x, A^c) + \rho] \diff s \\
        &\le \rho \int_0^t \eulerE^{- \kappa(x) \cdot s} ( 1 - \kappa(x) \cdot s) \diff s + \rho \cdot t \int_0^t \eulerE^{- \kappa(x) \cdot s} \kappa(x) \diff s \\
        &= \rho \cdot t . \qedhere
    \end{align*}
\end{proof}

We now use \Cref{claim:transition_bound} to prove the following Lemma.
\begin{lemma} \label{lemma:ordered_stopping_times}
    Let $K$ be a non-explosive jump kernel and $\mu$ be a probability distribution on $(E, \mathcal{E})$. 
    Let $(\Omega, \mathcal{F}, \Pr_{\mu}, (X_t))$ be the Markov process associated with $K$ and $\mu$ as above.
    Suppose $A_1, \dots, A_k$ are such that 
    \begin{enumerate}
        \item For all $i \in [k]$ we have $\mu(A_i) = 0$.
        \item There is some $\rho \in \R_{\ge  0}$ such that for all $i \in [k]$ and $x \notin A_i$ it holds that $K(x, A_i) \le \rho$.
    \end{enumerate}
    Set $T_i \coloneqq \inf\{t \in \R_{\ge 0} \mid X_t \in A_i\}$.
    For all $t \in \R_{\ge 0}$ it holds that
    \[
        \Pr_{\mu}[T_1 < T_2 < \dots < T_k \le t] \le 1 - F_{\rho t}(k-1),
    \]
    where $F_{\rho t}$ is the cumulative distribution function of a Poisson distribution of rate $\rho t$. 
\end{lemma}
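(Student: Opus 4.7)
My plan is to dominate the event $\{T_1 < T_2 < \dots < T_k \le t\}$ by the tail of a rate-$\rho$ Poisson process via a coupling argument.

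First, I introduce an integer-valued counter $(M_t)_{t \ge 0}$ on $\{0, 1, \dots, k\}$ by setting $M_0 = 0$ and incrementing $M$ from $j$ to $j+1$ (for $j < k$) at exactly those times $u$ where $X_{u^-} \notin A_{j+1}$ and $X_u \in A_{j+1}$. Then $(X_t, M_t)$ is a Markov jump process on $E \times \{0, \dots, k\}$ whose jump kernel $K'$ has the property that the rate of $M$-increments from state $(x, j)$ equals $K(x, A_{j+1}) \ind{x \notin A_{j+1}, j < k}$, which is bounded by $\rho$. Moreover, on the event $\{T_1 < T_2 < \dots < T_k \le t\}$, the fact that each $T_i$ is a first-hit time forces $X_{T_i^-} \notin A_i$ while $X_{T_i} \in A_i$; an induction on $i$ therefore yields $M_{T_i} = i$ and in particular $M_t \ge k$.

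The key step is to couple $(X_t, M_t)$ with a rate-$\rho$ Poisson process $(N_t)$ via \Cref{lemma:coupling}. On the enlarged state space $E \times \{0, \dots, k\} \times \N_0$, I define a jump kernel $\tilde{K}$ consisting of three transition types from $((x, j), n)$: \emph{progress jumps} $((x, j), n) \to ((y, j+1), n+1)$ at rate $K(x, \diff y \cap A_{j+1}) \ind{x \notin A_{j+1}, j < k}$ (incrementing both $M$ and $N$ simultaneously); \emph{non-progress $X$-jumps} $((x, j), n) \to ((y, j), n)$ at the residual rate $K(x, \diff y) - K(x, \diff y \cap A_{j+1}) \ind{x \notin A_{j+1}, j < k}$ (leaving both $M$ and $N$ unchanged); and \emph{ghost jumps} $((x, j), n) \to ((x, j), n+1)$ at rate $\rho - K(x, A_{j+1}) \ind{x \notin A_{j+1}, j < k}$ (incrementing only $N$). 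A direct check of \Cref{def:coupling_kernel} shows that $\tilde{K}$ is a valid coupling of $K'$ with the rate-$\rho$ Poisson kernel on $\N_0$, so by \Cref{lemma:coupling}, the $N$-marginal is a genuine rate-$\rho$ Poisson process; by construction, every $M$-increment is matched by an $N$-increment, so $M_t \le N_t$ for all $t$ almost surely. Combining,
\[
    \Pr_\mu[T_1 < T_2 < \dots < T_k \le t] \le \Pr_\mu[M_t \ge k] \le \Pr[N_t \ge k] = 1 - F_{\rho t}(k-1).
\]

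The main obstacle is this kernel-level coupling: one must verify the coupling identities of \Cref{def:coupling_kernel} for the three transition types (a straightforward but tedious computation) and check that both $K'$ and $\tilde{K}$ are non-explosive. The latter follows from the non-explosiveness of $K$ via \Cref{lemma:nonexp_test}, using essentially the same Lyapunov function as for $K$ (augmented by a term growing in $n$ for $\tilde{K}$ to handle the unbounded $\N_0$-component).
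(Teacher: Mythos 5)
Your proof is correct, but it takes a genuinely different route from the paper's. The paper decomposes the event $\{T_1 < \dots < T_k \le t\}$ into waiting times $S_i = T_i - T_{i-1}$, proves via a time-discretization argument and the strong Markov property (Claim~\ref{claim:waiting_times}) that each conditional waiting time has an exponential tail lower bound $\Pr_\mu[S_i > s \mid \mathcal{F}^X_{T_{i-1}}] \ge \eulerE^{-\rho s}$, and then stacks these via an inductive comparison with the Erlang distribution, using the identity $G_{k,\rho}(t) = 1 - F_{\rho t}(k-1)$. You instead lift the process to an extended state space, build an auxiliary counter $M_t$ whose increments are precisely the relevant hitting events, and dominate $M_t$ pathwise by a rate-$\rho$ Poisson counter $N_t$ via the kernel-level coupling machinery (Definition~\ref{def:coupling_kernel}, Lemma~\ref{lemma:coupling}); the bound then drops out of the Poisson tail. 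What your approach buys is that it replaces the somewhat delicate discretization limit inside Claim~\ref{claim:waiting_times} with a transparent pathwise monotonicity ($N_t - M_t$ is nondecreasing), and it reuses the coupling apparatus already developed in Section~\ref{sec:jump_process}; what the paper's buys is that it needs no enlargement of the state space and no fresh verification of non-explosiveness. A few places in your write-up should be fleshed out in a final version: you need to record that the $X$-marginal of the transition family of $K'$ coincides with that of $K$ (this follows by induction on the iterates $P^{(n)}_t$ using $\kappa'(x,j) = \kappa(x)$ and the fact that $K'((x,j), \diff y \times \{0,\dots,k\}) = K(x, \diff y)$), so that the law of the hitting times for the $\hat X$-marginal of the coupled process agrees with that of the original process; and the non-explosiveness of $K'$ should be argued from $\kappa' = \kappa$ together with the finiteness of the $M$-coordinate rather than by gesturing at ``the same Lyapunov function,'' since the paper does not exhibit a Lyapunov function for $K$ (it merely assumes non-explosiveness). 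With those points made explicit, the argument is complete.
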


\begin{proof}
    To simplify notation, set $T_0 \coloneqq 0$. 
    For $i \in [k]$, define
    \[
        S_i \coloneqq \begin{cases}
            T_{i} - T_{i - 1} &\text{ if } T_{i} > T_{i-1}, \\
            \infty &\text{ otherwise, }
        \end{cases}
    \]
    and $\zeta_i = \sum_{j=1}^{i} S_j$, and note that
    \[
        \Pr_{\mu}[T_1 < T_2 < \dots < T_k \le t] = \Pr_{\mu}[\zeta_k \le t] .
    \]
    The following claim is key for proving this lemma.
    \begin{claim} \label{claim:waiting_times}
        For every $s \in \R_{\ge 0}$ and every $i \in [k]$ it holds that $\Pr_{\mu}[S_i > s \mid \mathcal{F}^{X}_{T_{i-1}}] \ge \eulerE^{- \rho s}$.
    \end{claim}
     Next, let $G_{i, \rho}$ be the cumulative distribution function of an Erlang distribution with shape $i$ and rate $\rho$. 
     We will use \Cref{claim:waiting_times} to argue inductively that $\Pr_{\mu}[\zeta_i \le t] \le G_{i, \rho}(t)$.
     For $i = 1$, observe that $\zeta_1 = S_1$ and by \Cref{claim:waiting_times} we have
     \[
        \Pr_{\mu}[S_1 \le t] \le 1 - \eulerE^{-\rho t} = G_{1, \rho}(t). 
     \]
     For the induction step, we use \Cref{claim:waiting_times} and the fact that $\zeta_i$ is $\mathcal{F}^{X}_{T_i}$-measurable to obtain
     \[
        \Pr_{\mu}[\zeta_{i+1} \le t] 
        = \E[\Pr_{\mu}[S_{i+1} \le t - \zeta_{i} \mid \zeta_{i}]] 
        \le 1 - \E[\ind{\zeta_i > t} + \ind{\zeta_i \le t} \cdot \eulerE^{- \rho (t - \zeta_i)}].
     \]
     Further, by the induction hypothesis and the fact that $x \mapsto \ind{x > t} + \ind{x \le t} \cdot \eulerE^{-\rho (t - x)}$ is non-decreasing, we have
     \[
        \E[\ind{\zeta_i > t} + \ind{\zeta_i \le t} \cdot \eulerE^{- \rho (t - \zeta_i)}] \ge 1 - G_{i, \rho}(t) + \eulerE^{-\rho t} \int_{0}^{t} \eulerE^{\rho x} G_{i, \rho}(\diff x) = 1 - G_{i+1, \rho}(t)  
     \]
     Hence, we have $\Pr_{\mu}[\zeta_{i+1} \le t] \le G_{i+1, \rho}(t)$ which concludes the induction.
     Finally, observing that $G_{k, \rho}(t) = 1 - F_{\rho t}(k - 1)$ concludes the proof.
\end{proof}

\begin{proof}[Proof of \Cref{claim:waiting_times}]
    Note that
    \[
        \Pr_{\mu}[S_i > s \mid \mathcal{F}^{X}_{T_{i-1}}] 
        = \ind{T_i \le T_{i-1}} + \Pr_{\mu}[T_{i} > T_{i-1} + s \mid \mathcal{F}^{X}_{T_{i-1}}].
    \]
    Hence, our main concern is to lower-bound $\Pr_{\mu}[T_{i} > T_{i-1} + s \mid \mathcal{F}^{X}_{T_{i-1}}]$.
    To this end, for every $n \in \N$, we consider the event $D^{(s)}_n \coloneqq \{T_i > T_{i-1}\} \cap \{\forall k \in \{0, \dots, n-1\}: X_{T_{i-1} + \frac{k s}{n}} \notin A_i\}$.
    By \Cref{thm:construction} (\ref{thm:construction:sample_paths}), we know that $(X_t)$ has piecewise constant sample paths. 
    Thus, $\ind{D^{(s)}_n} \to \ind{T_i > T_{i-1} + s}$ pointwise as $n \to \infty$, and $\lim_{n \to \infty} \Pr_{\mu}[D^{(s)}_{n} \mid \mathcal{F}^{X}_{T_{i-1}}] = \Pr_{\mu}[T_{i} > T_{i-1} + s \mid \mathcal{F}^{X}_{T_{i-1}}]$ by dominated convergence.
    Further, using the strong Markov property from \Cref{thm:construction} (\ref{thm:construction:strongMarkov}) and \Cref{claim:transition_bound}, we obtain
    \begin{align*}
        \Pr_{\mu}[D^{(s)}_{n} \mid \mathcal{F}^{X}_{T_{i-1}}]
        &= \E[\ind{D^{(s - s/n)}_{n-1}} \cdot \Pr_{\mu}[X_{T_{i-1} + s} \notin A_i \mid \mathcal{F}^{X}_{T_{i-1} + \frac{n-1}{n}s}] \mid \mathcal{F}^{X}_{T_{i-1}}] \\
        &= \E[\ind{D^{(s - s/n)}_{n-1}} \cdot P_{s/n}(X_{T_{i-1} + \frac{n-1}{n}s}, A_i^c) \mid \mathcal{F}^{X}_{T_{i-1}}] \\
        &\ge \Pr_{\mu}[D^{(s - s/n)}_{n-1} \mid \mathcal{F}^{X}_{T_{i-1}}] \cdot \left(1 - \frac{\rho s}{n}\right) .
    \end{align*}
    Applying this argument recursively and taking the limit $n \to \infty$ yields
    \[
        \Pr_{\mu}[T_{i} > T_{i-1} + s \mid \mathcal{F}^{X}_{T_{i-1}}] \ge \ind{T_i > T_{i - 1}} \cdot \lim_{n \to \infty} \left(1 - \frac{\rho s}{n}\right)^n = \ind{T_i > T_{i - 1}} \cdot \eulerE^{- \rho s}.
    \]
    Hence, we have
    \[
        \Pr_{\mu}[S_i > s \mid \mathcal{F}^{X}_{T_{i-1}}] 
        \ge \ind{T_i \le T_{i-1}} + \ind{T_i > T_{i - 1}} \cdot \eulerE^{- \rho s}
        \ge \eulerE^{- \rho s} . \qedhere
    \]
\end{proof}

\section{Constructing spatial birth-death dynamics for Gibbs point processes} \label{sec:jump_process_for_gpp}
In this section we construct the jump process that we will use to prove uniqueness of infinite volume Gibbs measures.
To this end, fix some activity $\activity \in \R_{\ge 0}$ and let $\potential$ be a pair potential that is locally stable with constant $\localStability \in \R_{\ge 0}$.
For every $\xi \in \feasibleCountingMeasures$ and every $\subregion \in \boundedBorel$ we define  $\jumpKernel_{\subregion \mid \xi}: \countingMeasures_{\subregion \mid \xi} \times \countingAlgebra_{\subregion \mid \xi} \to \R_{\ge 0}$ via
\begin{align} \label{eq:jump_kernel}
    \jumpKernel_{\subregion \mid \xi}(\eta, A) \coloneqq\int_{\subregion} \ind{A}(\eta \setminus \delta_x) \eta(\diff x) + \activity \int_{\subregion} \ind{A}(\eta + \delta_x) \cdot \eulerE^{-\influence(x, \eta + \xi_{\subregion^c})} \volume(\diff x).
\end{align}

\begin{lemma} \label{lemma:jump_kernel_non_exp}
    Let $\potential$ be locally stable, $\subregion \in \boundedBorel$ and $\xi \in \feasibleCountingMeasures$.
    Then $\jumpKernel_{\subregion \mid \xi}$, defined as in \eqref{eq:jump_kernel}, is a non-explosive jump kernel on $(\countingMeasures_{\subregion \mid \xi}, \countingAlgebra_{\subregion \mid \xi})$.
\end{lemma}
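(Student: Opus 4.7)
The plan is to verify first that $\jumpKernel_{\subregion \mid \xi}$ satisfies the abstract definition of a jump kernel on $(\countingMeasures_{\subregion \mid \xi}, \countingAlgebra_{\subregion \mid \xi})$, and then to invoke \Cref{lemma:nonexp_test} for non-explosiveness. For the kernel property, the first (death) term contributes total mass $\eta(\subregion) < \infty$ because $\eta \in \countingMeasures_{\subregion}$, while for the (birth) second term \Cref{lemma:influence_bound} applied to $\eta + \xi_{\subregion^c} \in \feasibleCountingMeasures$ yields $\influence(x, \eta + \xi_{\subregion^c}) \ge -\localStability$, so its total mass is bounded by $\activity \eulerE^{\localStability} \volume(\subregion) < \infty$. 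Measurability of $\eta \mapsto \jumpKernel_{\subregion \mid \xi}(\eta, A)$ for $A \in \countingAlgebra_{\subregion \mid \xi}$ follows from standard monotone class arguments applied to the measurable maps $(\eta, x) \mapsto \eta \pm \delta_x$ and $(\eta, x) \mapsto \influence(x, \eta + \xi_{\subregion^c})$, and $\jumpKernel_{\subregion \mid \xi}(\eta, \{\eta\}) = 0$ is immediate because every transition changes $\eta(\subregion)$ by $\pm 1$.

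A subtle point is that one must check the kernel is supported on $\countingMeasures_{\subregion \mid \xi}$. For death transitions this is automatic since $\feasibleCountingMeasures$ is hereditary. For birth transitions I would argue that if $\eta + \delta_x + \xi_{\subregion^c} \notin \feasibleCountingMeasures$, then there exists a finite $\zeta \le \eta + \xi_{\subregion^c}$ with $\hamiltonian(\zeta) < \infty$ but $\hamiltonian(\zeta + \delta_x) = \infty$, which forces $\potential(x, y) = \infty$ for some $y \in \support_{\zeta}$, and hence $\influence(x, \eta + \xi_{\subregion^c}) = \infty$. Thus the birth rate $\eulerE^{-\influence(x, \eta + \xi_{\subregion^c})}$ vanishes at every $x$ where a birth transition would leave $\countingMeasures_{\subregion \mid \xi}$.

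For non-explosiveness, I apply \Cref{lemma:nonexp_test} with $E_n \coloneqq \{\eta \in \countingMeasures_{\subregion \mid \xi} \mid \eta(\subregion) \le n\}$ and test function $f(\eta) \coloneqq \eta(\subregion) + 1$. Conditions (1)--(3) are straightforward: $E_n$ is increasing with union $\countingMeasures_{\subregion \mid \xi}$; writing $\alpha(\eta) \coloneqq \activity \int_{\subregion} \eulerE^{-\influence(x, \eta + \xi_{\subregion^c})} \volume(\diff x)$ one has $\kappa(\eta) = \eta(\subregion) + \alpha(\eta) \le n + \activity \eulerE^{\localStability} \volume(\subregion)$ on $E_n$; and $f(\eta) > n$ outside $E_n$. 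For the drift condition (4), a direct calculation gives
\[
    (\jumpKernel_{\subregion \mid \xi} f)(\eta) = \eta(\subregion)^2 + (\eta(\subregion) + 2)\alpha(\eta) \quad \text{and} \quad \kappa(\eta) f(\eta) = \eta(\subregion)^2 + \eta(\subregion) + \alpha(\eta)(\eta(\subregion) + 1),
\]
so that $(\jumpKernel_{\subregion \mid \xi} f)(\eta) - \kappa(\eta) f(\eta) = \alpha(\eta) - \eta(\subregion) \le \activity \eulerE^{\localStability} \volume(\subregion) \cdot f(\eta)$, since $f(\eta) \ge 1$. Taking $c \coloneqq \activity \eulerE^{\localStability} \volume(\subregion)$ then verifies (4) and \Cref{lemma:nonexp_test} yields non-explosiveness. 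I expect the only real subtlety to be the argument that birth transitions out of $\countingMeasures_{\subregion \mid \xi}$ are automatically suppressed by the rate itself; everything else is routine once the bounds from \Cref{lemma:influence_bound} are in place.
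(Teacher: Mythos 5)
Your proof is correct and follows essentially the same route as the paper: verify finiteness of $\jumpKernel_{\subregion \mid \xi}$ via \Cref{lemma:influence_bound}, then apply \Cref{lemma:nonexp_test} with $E_n = \{\eta \mid \eta(\subregion) \le n\}$ and $f(\eta) = \eta(\subregion) + 1$. Your drift computation is a touch sharper (giving $c = \activity\eulerE^{\localStability}\volume(\subregion)$ exactly, where the paper crudely bounds by $2\activity\eulerE^{\localStability}\volume(\subregion)$), and your explicit check that the birth rate vanishes whenever $\eta + \delta_x \notin \countingMeasures_{\subregion \mid \xi}$ addresses a point the paper leaves to a footnote in the proof of \Cref{lemma:identity_coupling}; both are welcome but do not change the overall argument.
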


\begin{proof}
    Using the lower bound on $\influence$ from \Cref{lemma:influence_bound} and the fact that $\eta + \xi_{\subregion^c} \in \feasibleCountingMeasures$ for all $\eta \in \countingMeasures_{\subregion \mid \xi}$, we have
    \[
        \jumpKernel_{\subregion \mid \xi}(\eta, \countingMeasures_{\subregion}) \le \eta(\subregion) + \activity \eulerE^{\localStability} \cdot \volume(\subregion)
        < \infty,
    \]
    proving that $\jumpKernel_{\subregion \mid \xi}$ is indeed finite.

    To prove that $\jumpKernel_{\subregion \mid \xi}$ is non-explosive, we are going to use \Cref{lemma:nonexp_test} with 
    \[
        E_n = \{\eta \in \countingMeasures_{\subregion \mid \xi} \mid \eta(\subregion) \le n\}, 
    \]
    and with $f: \countingMeasures_{\subregion \mid \xi} \to \R_{\ge 0}, \eta \mapsto \eta(\subregion) + 1$.
    We check that this choice of $(E_n)$ and $f$ satisfies the requirements of \Cref{lemma:nonexp_test}.
    Obviously, we have that $E_n$ converges monotonically to $\countingMeasures_{\subregion \mid \xi}$.
    Further, by \Cref{lemma:influence_bound} and the fact that $\eta + \xi_{\subregion^c} \in \feasibleCountingMeasures$ for all $\eta \in \countingMeasures_{\subregion \mid \xi}$, it holds that for every $n \in \N$
    \[
        \sup_{\eta \in E_n} \jumpKernel_{\subregion \mid \xi}(\eta, \countingMeasures_{\subregion \mid \xi}) \le n + \activity \eulerE^{\localStability} \cdot \volume(\subregion)
        < \infty.
    \]
    Next, observe that $\inf_{\eta \in \countingMeasures_{\subregion \mid \xi} \setminus E_n} f(\eta) =  n + 2$ and therefore $\lim_{n \to \infty} \inf_{\eta \in \countingMeasures_{\subregion \mid \xi} \setminus E_n} f(\eta) = \infty$.
    Finally, we have for all $\eta \in \countingMeasures_{\subregion \mid \xi}$ that
    \begin{align*}
        (\jumpKernel_{\subregion \mid \xi} f)(\eta)
        &\le \eta(\subregion)^2 + (\eta(\subregion) + 2) \cdot \activity  \eulerE^{L} \cdot \volume(\subregion) \\
        &\le (\eta(\subregion) + 2 \activity  \eulerE^{L} \cdot \volume(\subregion)) \cdot f(\eta) \\
        &\le (\jumpKernel_{\subregion \mid \xi}(\eta, \countingMeasures_{\subregion \mid \xi}) + 2 \activity  \eulerE^{L} \cdot \volume(\subregion)) \cdot f(\eta).
    \end{align*}
    Hence, setting $c = 2 \activity  \eulerE^{L} \cdot \volume(\subregion)$ and applying \Cref{lemma:nonexp_test} shows that $\jumpKernel_{\subregion \mid \xi}$ is non-explosive and concludes the proof.
\end{proof}

Throughout the following subsections, we study two important properties of the transition family $(P_t)$ that is associated with a jump kernel $\jumpKernel_{\subregion \mid \xi}$.
Firstly, we derive conditions such that $P_t(\eta, \cdot)$ converges rapidly to $\gibbs_{\subregion \mid \xi}$ (viewed as a distribution on $(\countingMeasures_{\subregion \mid \xi}, \countingAlgebra_{\subregion \mid \xi})$) in total variation distance as $t \to \infty$.
Secondly, we give conditions such that, for two transitions families $(P^{\xi}_t)$ and $(P^{\zeta}_t)$ associated with $\jumpKernel_{\subregion \mid \xi}$ and $\jumpKernel_{\subregion \mid \zeta}$ for boundary configurations $\xi, \zeta \in \feasibleCountingMeasures$, it holds that the total variation distance between $P^{\xi}_t(\eta, \cdot)$ and $P^{\zeta}_t(\eta, \cdot)$ when projected on some subregion $\subregion' \subset \subregion$ that is sufficiently far from $\subregion^c$ only increases slowly as a function of $t$.
Combining both properties with \Cref{lemma:uniqueness_from_ssm} will yield the desired uniqueness result for infinite-volume Gibbs measures.

\subsection{Identity coupling}
We start by constructing a coupling for the jump kernels $\jumpKernel_{\subregion \mid \xi}, \jumpKernel_{\subregion \mid \zeta}$ for any $\subregion \in \boundedBorel$ and $\xi, \zeta \in \feasibleCountingMeasures$, which we will call the \emph{identity coupling}.
We remark that a similar coupling for spatial birth-death processes has been studied before by Schuhmacher and Stucki \cite{schuhmacher2014gibbs} for applying Stein's method to Gibbs point processes.

\begin{lemma} \label{lemma:identity_coupling}
    Let $\subregion \in \boundedBorel$ and $\xi, \zeta \in \feasibleCountingMeasures$, and for every $\eta_1 \in \countingMeasures_{\subregion \mid \xi}$, $ \eta_2 \in \countingMeasures_{\subregion \mid \zeta}$ and $A \in \countingAlgebra_{\subregion \mid \xi} \otimes \countingAlgebra_{\subregion \mid \zeta}$ define
    \begin{align*}
        D_1((\eta_1, \eta_2), A) &\coloneqq \int_{\subregion} \ind{A}(\eta_1 \setminus \delta_x, \eta_2) (\eta_1 \setminus \eta_2)(\diff x) \\
        D_2((\eta_1, \eta_2), A) &\coloneqq \int_{\subregion} \ind{A}(\eta_1, \eta_2  \setminus \delta_x) (\eta_2 \setminus \eta_1)(\diff x) \\
        D_{\cap}((\eta_1, \eta_2), A) &\coloneqq \int_{\subregion} \ind{A}(\eta_1 \setminus \delta_x, \eta_2 \setminus \delta_x) (\eta_1 \cap \eta_2)(\diff x) \\
        B_{1}((\eta_1, \eta_2), A) &\coloneqq \activity \int_{\subregion} \max\left(0, \eulerE^{-\influence(x, \eta_1 + \xi_{\subregion^c})} - \eulerE^{-\influence(x, \eta_2 + \zeta_{\subregion^c})}\right) \cdot \ind{A}(\eta_1 + \delta_x, \eta_2)\volume(\diff x) \\
        B_{2}((\eta_1, \eta_2), A) &\coloneqq \activity \int_{\subregion} \max\left(0, \eulerE^{-\influence(x, \eta_2 + \zeta_{\subregion^c})} - \eulerE^{-\influence(x, \eta_1 + \xi_{\subregion^c})}\right) \cdot \ind{A}(\eta_1, \eta_2 + \delta_x)\volume(\diff x) \\
        B_{\cap}((\eta_1, \eta_2), A) &\coloneqq \activity \int_{\subregion} \min\left(\eulerE^{-\influence(x, \eta_1 + \xi_{\subregion^c})}, \eulerE^{-\influence(x, \eta_2 + \zeta_{\subregion^c})}\right) \cdot \ind{A}(\eta_1 + \delta_x, \eta_2 + \delta_x)\volume(\diff x) .
    \end{align*}
    Then $\jumpKernel \coloneqq D_1 + D_2 + D_{\cap} + B_1 + B_2 + B_{\cap}$ is a coupling of $\jumpKernel_{\subregion \mid \xi}$ and $\jumpKernel_{\subregion \mid \zeta}$ in the sense of \Cref{def:coupling_kernel}.
\end{lemma}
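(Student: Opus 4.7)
My plan is to verify the two marginal equations of \Cref{def:coupling_kernel} directly from the definition of $\jumpKernel$, exploiting the symmetry between its two halves. First I would do a preliminary check that $\jumpKernel$ is a jump kernel: the six components have total mass bounded by $\eta_1(\subregion) + \eta_2(\subregion) + 3 \activity \eulerE^{\localStability} \volume(\subregion)$, which is finite by \Cref{lemma:influence_bound} together with the fact that $\eta_1, \eta_2$ are supported on the bounded set $\subregion$; moreover, every component sends $(\eta_1, \eta_2)$ to a state that differs in at least one coordinate, so there are no self-jumps. Measurability in both arguments is routine from the component definitions.

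For the marginal equation involving $f_1 \colon \countingMeasures_{\subregion \mid \xi} \to \R_{\ge 0}$, I would classify the six terms by their effect on $\eta_1$. The terms $D_2$ and $B_2$ leave $\eta_1$ unchanged, so their integrand $f_1(y_1) - f_1(\eta_1)$ vanishes and they contribute nothing. Summing $D_1$ and $D_\cap$ and using the counting-measure identity $(\eta_1 \setminus \eta_2) + (\eta_1 \cap \eta_2) = \eta_1$ reduces the death contribution to
\[
    \int_{\subregion} \bigl(f_1(\eta_1 \setminus \delta_x) - f_1(\eta_1)\bigr) \, \eta_1(\diff x) ,
\]
which matches the death part of $(\jumpKernel_{\subregion \mid \xi} f_1)(\eta_1) - \kappa_{\subregion \mid \xi}(\eta_1) f_1(\eta_1)$. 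Summing $B_1$ and $B_\cap$ and applying the elementary identity $\max(0, a - b) + \min(a, b) = a$ with $a = \eulerE^{-\influence(x, \eta_1 + \xi_{\subregion^c})}$ and $b = \eulerE^{-\influence(x, \eta_2 + \zeta_{\subregion^c})}$ reduces the birth contribution to
\[
    \activity \int_{\subregion} \bigl(f_1(\eta_1 + \delta_x) - f_1(\eta_1)\bigr) \, \eulerE^{-\influence(x, \eta_1 + \xi_{\subregion^c})} \, \volume(\diff x) ,
\]
which matches the birth part. Summing both gives the required right-hand side.

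The $f_2$-equation follows by swapping the roles $\eta_1 \leftrightarrow \eta_2$ and $\xi \leftrightarrow \zeta$: the same structural argument applies with $D_1$ and $B_1$ now in the passive role, and the identities $(\eta_2 \setminus \eta_1) + (\eta_1 \cap \eta_2) = \eta_2$ and $\max(0, b - a) + \min(a, b) = b$ close the computation. The only real content is the min-plus-max identity, which is what makes this the \emph{identity} coupling: the two marginal birth rates at $x$ are split into a shared part of rate $\activity \min\bigl(\eulerE^{-\influence(x, \eta_1 + \xi_{\subregion^c})}, \eulerE^{-\influence(x, \eta_2 + \zeta_{\subregion^c})}\bigr)$ that fires the simultaneous birth $B_\cap$, with $B_1$ and $B_2$ making up the independent remainder on each side. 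I expect the main obstacle to be purely bookkeeping---matching each of the six components to its correct marginal contribution---rather than anything analytic.
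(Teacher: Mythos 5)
Your proposal is correct and follows essentially the same approach as the paper's own proof: isolate the terms that fix $\eta_1$ (namely $D_2$, $B_2$), merge $D_1 + D_\cap$ via $(\eta_1 \setminus \eta_2) + (\eta_1 \cap \eta_2) = \eta_1$, merge $B_1 + B_\cap$ via $\max(0,a-b) + \min(a,b) = a$, and invoke symmetry for the second marginal. The paper is slightly terser on the preliminary verification that $\jumpKernel$ is a jump kernel (deferring to the argument in \Cref{lemma:jump_kernel_non_exp}), but the substance is identical.
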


\begin{remark}
    We should think of the way that \Cref{lemma:identity_coupling} presents the construction of the coupled jump kernel $\jumpKernel$ in terms of its associated Markov process $(X_t^{(1)}, X_t^{(2)})_{t \in \R_{\ge 0}}$ on $\countingMeasures_{\subregion \mid \xi} \times \countingMeasures_{\subregion \mid \zeta}$ as follows:
    The kernels $D_1$ and $D_2$ represent the rate at which only $X_t^{(1)}$ or only $X_t^{(2)}$ remove a point (i.e., $D_i$ is the death rate that is exclusive to $X_t^{(i)}$).
    In contrast, $D_{\cap}$ is the rate at which both processes remove a point that they have in common (i.e., $D_{\cap}$ is the shared death rate on $X_t^{(1)} \cap X_t^{(2)}$).
    Analogously, $B_1$ and $B_2$ represent the rate at which a point is added to only $X_t^{(1)}$ or only $X_t^{(2)}$ (i.e., birth rate exclusive to one process), while $B_{\cap}$ is the rate at which both processes add the same point (i.e., shared birth rate).
\end{remark}

\begin{proof}[Proof of \Cref{lemma:identity_coupling}]
    Checking that $K$ is indeed a jump kernels can be done similarly to the proof of \Cref{lemma:jump_kernel_non_exp}.
    To show that $K$ is a coupling of $\jumpKernel_{\subregion \mid \xi}$ and $\jumpKernel_{\subregion \mid \zeta}$,
    let $f: \countingMeasures_{\subregion \mid \xi} \to \R_{\ge 0}$ be bounded and measurable, and observe that 
    \[
        \int_{\countingMeasures_{\subregion \mid \xi} \times \countingMeasures_{\subregion \mid \zeta}} (f(\tau_1) - f(\eta_1)) D_2((\eta_1, \eta_2), \diff \tau_1 \times \diff \tau_2) 
        = \int_{\subregion} (f(\eta_1) - f(\eta_1)) (\eta_2 \setminus \eta_1)(\diff x) = 0
    \]
    and analogously
    \[
        \int_{\countingMeasures_{\subregion \mid \xi} \times \countingMeasures_{\subregion \mid \zeta}} (f(\tau_1) - f(\eta_1)) B_2((\eta_1, \eta_2), \diff \tau_1 \times \diff \tau_2) = 0 .
    \]    
    Further, using the fact that $\eta_1 = (\eta_1 \setminus \eta_2) + (\eta_1 \cap \eta_2)$ we have
    \[
        \int_{\countingMeasures_{\subregion \mid \xi} \times \countingMeasures_{\subregion \mid \zeta}} (f(\tau_1) - f(\eta_1)) (D_1 + D_{\cap})((\eta_1, \eta_2), \diff \tau_1 \times \diff \tau_2)
        = \int_{\subregion} (f(\eta_1 \setminus \delta_x) - f(\eta_1)) \eta_1(\diff x),
    \]
    and, since $a = \max(0, a-b) + \min(a, b)$ for all $a, b \in \R_{\ge 0}$,\footnote{In fact, the integral on the right-hand side should only be over $\{x \in \subregion \mid \eta + \delta_x \in \countingMeasures_{\subregion \mid \xi}\}$. However, since $\eta \in \countingMeasures_{\subregion \mid \xi}$, it can be shown that all $x \in \subregion$ with $\eta + \delta_x \notin \countingMeasures_{\subregion \mid \xi}$ satisfy $\influence(x, \eta + \xi_{\subregion^c}) = \infty$, justifying this expression.}
     \[
        \int_{\countingMeasures_{\subregion \mid \xi} \times \countingMeasures_{\subregion \mid \zeta}} (f(\tau_1) - f(\eta_1)) (B_1 + B_{\cap})((\eta_1, \eta_2), \diff \tau_1 \times \diff \tau_2)
        = \activity \int_{\subregion} \eulerE^{-\influence(x, \eta_1 + \xi_{\subregion^c})} \cdot (f(\eta_1 + \delta_x) - f(\eta_1)) \volume(\diff x).
    \]
    Hence, it holds that
    \begin{align*}
        \int_{\countingMeasures_{\subregion \mid \xi} \times \countingMeasures_{\subregion \mid \zeta}} (f(\tau_1) - f(\eta_1)) K((\eta_1, \eta_2), \diff \tau_1 \times \diff \tau_2)
        &= \int_{\countingMeasures_{\subregion \mid \xi}} (f(\tau_1) - f(\eta_1)) \jumpKernel_{\subregion \mid \xi}(\eta_1, \diff \tau_1) \\
        &= (\jumpKernel_{\subregion \mid \xi} f)(\eta_1) - \jumpKernel_{\subregion \mid \xi}(\eta_1, \countingMeasures_{\subregion \mid \xi}) \cdot f(\eta_1) .
    \end{align*}
    By symmetry, it then follows that $K$ is indeed a coupling in the sense of \Cref{def:coupling_kernel}.    
\end{proof}

\subsection{Rapid mixing} \label{subsec:mixing}
We proceed by showing that, for sufficiently small activity $\activity$, the Markov transition family associated to $\jumpKernel_{\subregion \mid \xi}$ converges rapidly to $\gibbs_{\subregion \mid \xi}$ (noting that $\gibbs_{\subregion \mid \xi}$ is well-defined by \Cref{lemma:bound_partition_function}).
We start by identifying the finite volume Gibbs measure $\gibbs_{\subregion \mid \xi}$ as a stationary distribution of the Markov transition family associated to $\jumpKernel_{\subregion \mid \xi}$.
More precisely, recall that by \Cref{lemma:gibbs_support} it holds that $\gibbs_{\subregion \mid \xi}$ is only supported on $\countingMeasures_{\subregion \mid \xi}$.
Thus, we can uniquely identify $\gibbs_{\subregion \mid \xi}$ with a distribution $\mu' \in \probMeasures(\countingMeasures_{\subregion \mid \xi}, \countingAlgebra_{\subregion \mid \xi})$ with $\mu'(A) = \gibbs_{\subregion \mid \xi}(A)$ for all $A \in \countingAlgebra_{\subregion \mid \xi}$.
With some abuse of terminology and notation, we may not differentiate between $\gibbs_{\subregion \mid \xi}$ and the associated distribution on $(\countingMeasures_{\subregion \mid \xi}, \countingAlgebra_{\subregion \mid \xi})$ in the following statements.

\begin{lemma} \label{lemma:jump_kernel_reversible}
    Let $\potential$ be locally stable, $\subregion \in \boundedBorel$, $\xi \in \feasibleCountingMeasures$ and let $\jumpKernel_{\subregion \mid \xi}$ be defined as in \eqref{eq:jump_kernel}.
    The Markov transition family associated to $\jumpKernel_{\subregion \mid \xi}$ is reversible with respect to $\gibbs_{\subregion \mid \xi}$.
\end{lemma}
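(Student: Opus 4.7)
The plan is to verify the detailed balance condition for $\jumpKernel_{\subregion \mid \xi}$ with respect to $\gibbs_{\subregion \mid \xi}$ and then invoke \Cref{lemma:reversible_kernel} (using non-explosivity from \Cref{lemma:jump_kernel_non_exp} to upgrade reversibility to stationarity). It is convenient to split the kernel as $\jumpKernel_{\subregion \mid \xi} = \jumpKernel^{D} + \jumpKernel^{B}$ into a death part
\[
    \jumpKernel^{D}(\eta, A) = \int_{\subregion} \ind{A}(\eta \setminus \delta_x)\, \eta(\diff x)
\]
and a birth part $\jumpKernel^{B}(\eta, A) = \activity \int_{\subregion} \ind{A}(\eta + \delta_x)\, \eulerE^{-\influence(x, \eta + \xi_{\subregion^c})}\, \volume(\diff x)$. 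Since every birth is the exact inverse of a death, I expect that the right identity to prove for all $A, B \in \countingAlgebra_{\subregion \mid \xi}$ is the cross-identity
\[
    \int_{A} \jumpKernel^{D}(\eta, B)\, \gibbs_{\subregion \mid \xi}(\diff \eta) = \int_{B} \jumpKernel^{B}(\eta, A)\, \gibbs_{\subregion \mid \xi}(\diff \eta).
\]

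The key step is that this cross-identity is precisely the GNZ equation (\Cref{lemma:gnz}) evaluated on the test function $F(x, \eta) \coloneqq \ind{A}(\eta)\cdot \ind{B}(\eta - \delta_x)$. Indeed, the left-hand side of GNZ becomes $\int_{A} \jumpKernel^{D}(\eta, B)\, \gibbs_{\subregion \mid \xi}(\diff \eta)$ because $\eta \setminus \delta_x = \eta - \delta_x$ on the support of $\eta(\diff x)$, and the right-hand side of GNZ becomes $\int_{B} \jumpKernel^{B}(\eta, A)\, \gibbs_{\subregion \mid \xi}(\diff \eta)$ after substituting $\eta + \delta_x$ for $\eta$ inside $F$. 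Swapping the roles of $A$ and $B$ yields the complementary identity exchanging the other birth-death pair, and summing the two gives the full detailed balance
\[
    \int_{A} \jumpKernel_{\subregion \mid \xi}(\eta, B)\, \gibbs_{\subregion \mid \xi}(\diff \eta) = \int_{B} \jumpKernel_{\subregion \mid \xi}(\eta, A)\, \gibbs_{\subregion \mid \xi}(\diff \eta).
\]

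There is no serious obstacle: the main care needed is to ensure $F$ is a valid non-negative measurable test function for GNZ (immediate) and that we only integrate over $\countingMeasures_{\subregion \mid \xi}$, which is justified by \Cref{lemma:gibbs_support}. Applying \Cref{lemma:reversible_kernel} to the detailed-balance identity above concludes the proof.
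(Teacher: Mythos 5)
Your proposal is correct and follows essentially the same route as the paper: reduce to detailed balance for the jump kernel via \Cref{lemma:reversible_kernel}, then derive the birth--death cross-identity from the GNZ equation applied to $F(x,\eta)=\ind{A}(\eta)\cdot\ind{B}(\eta\setminus\delta_x)$, with the $A\leftrightarrow B$ swap supplying the other half. The paper's proof is a more terse version of exactly this argument.
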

\begin{proof}
    First, note that by \Cref{lemma:bound_partition_function} it holds that $\gibbs_{\subregion \mid \xi}$ is well-defined.
    Using \Cref{lemma:reversible_kernel} we only need to check that
    \[
        \int_{A} \jumpKernel_{\subregion \mid \xi}(\eta, B) \gibbs_{\subregion \mid \xi}(\diff \eta) = \int_{B} \jumpKernel_{\subregion \mid \xi}(\eta, A) \gibbs_{\subregion \mid \xi}(\diff \eta) 
    \]
    for all $A, B \in \countingAlgebra_{\subregion \mid \xi}$.
    Writing both sides explicitly shows that it suffices to show that 
    \[
        \int_{A} \int_{\subregion} \ind{B}(\eta \setminus \delta_x) \eta(\diff x) \gibbs_{\subregion \mid \xi}(\diff \eta)
        = \int_{B} \activity \int_{\subregion} \eulerE^{- \influence(x, \eta + \xi_{\subregion^c})} \ind{A}(\eta + \delta_x) \eta(\diff x) \gibbs_{\subregion \mid \xi}(\diff \eta) ,
    \]
    which follows immediately from applying the GNZ equation from \Cref{lemma:gnz} to the function $(x, \eta) \mapsto \ind{A}(\eta) \cdot \ind{B}(\eta \setminus \delta_x)$.
\end{proof}

Next, we bound the speed of convergence by using \Cref{lemma:mixing} and the identity coupling from \Cref{lemma:identity_coupling}.

\begin{lemma} \label{lemma:jump_process_mixing}
    Let $\potential$ be locally stable with constant $\localStability$ and weakly tempered with constant $\weakTempered_{\potential}$.
    For all $\activity < \frac{1}{\weakTempered_{\potential} \eulerE^{\localStability}}$ there is some $\delta > 0$ such that for all $\subregion \in \boundedBorel$ and $\xi \in \feasibleCountingMeasures$ the following holds:
    For $\jumpKernel_{\subregion \mid \xi}$ as in \eqref{eq:jump_kernel}, the associated Markov transition family $(P_t)$ satisfies
    \[
        \dtv{P_t(\eta, \cdot)}{\gibbs_{\subregion \mid \xi}} \le \eulerE^{- \delta t} \cdot (\activity \eulerE^{\localStability} \volume(\subregion) + \eta(\subregion))
    \]
    for all $\eta \in \countingMeasures_{\subregion \mid \xi}$ and all $t \in \R_{\ge 0}$.
    In particular, $\delta$ may depend on $\weakTempered_{\potential}$, $\localStability$ and $\activity$, but is independent of $\subregion$, $\eta$, $\xi$ and $t$.
\end{lemma}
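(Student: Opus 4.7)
The plan is to apply \Cref{lemma:mixing} to the kernel $\jumpKernel_{\subregion \mid \xi}$ (which is non-explosive by \Cref{lemma:jump_kernel_non_exp} and has $\gibbs_{\subregion \mid \xi}$ as a reversible probability measure by \Cref{lemma:jump_kernel_reversible}), using the identity coupling of $\jumpKernel_{\subregion \mid \xi}$ with itself from \Cref{lemma:identity_coupling} (taking $\zeta = \xi$) and the potential function
\[
    f(\eta_1, \eta_2) \coloneqq (\eta_1 \symdiff \eta_2)(\subregion) .
\]
Since $\eta_1, \eta_2$ are supported on $\subregion$ and have integer multiplicities, $f(\eta_1, \eta_2) \ge 1$ if and only if $\eta_1 \neq \eta_2$, which verifies the threshold condition (\ref{item:mixing:threshold}) in \Cref{lemma:mixing} with $c = 1$.

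The key step is to verify the contraction condition (\ref{item:mixing:contraction}). The identity coupling splits into six pieces $D_1, D_2, D_{\cap}, B_1, B_2, B_{\cap}$. The death kernels $D_1, D_2$ each delete a point of the symmetric difference and therefore decrease $f$ by exactly one at each jump, so together they contribute $-(\eta_1 \symdiff \eta_2)(\subregion) = -f(\eta_1, \eta_2)$ to $(\Tilde{K} f - \Tilde{\kappa} f)(\eta_1, \eta_2)$. The shared death $D_{\cap}$ and shared birth $B_{\cap}$ preserve $f$ (a point in $\eta_1 \cap \eta_2$ remains outside $\eta_1 \symdiff \eta_2$ before and after), so they contribute $0$. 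The two exclusive births $B_1, B_2$ each add one to $f$ (an $\volume$-almost every newly born point is not already in the support of the other configuration), and combining the $\max(0, \cdot)$ expressions gives a total contribution of
\[
    \activity \int_{\subregion} \absolute{\eulerE^{- \influence(x, \eta_1 + \xi_{\subregion^c})} - \eulerE^{- \influence(x, \eta_2 + \xi_{\subregion^c})}} \volume(\diff x) .
\]
To bound this birth term I would combine three elementary facts: (i) $\absolute{\eulerE^{-a} - \eulerE^{-b}} = \eulerE^{-\min(a,b)}(1 - \eulerE^{-\absolute{a - b}}) \le \eulerE^{\localStability}(1 - \eulerE^{-\absolute{a-b}})$ by \Cref{lemma:influence_bound}; (ii) $\absolute{\influence(x, \eta_1) - \influence(x, \eta_2)} \le \int \absolute{\potential(x, y)} (\eta_1 \symdiff \eta_2)(\diff y)$ (the $\xi_{\subregion^c}$-contributions cancel); and (iii) the sub-additivity $1 - \eulerE^{-\sum_i a_i} \le \sum_i (1 - \eulerE^{-a_i})$ for $a_i \ge 0$, applied to the at-most-countable support of the counting measure $\eta_1 \symdiff \eta_2$. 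Together with Tonelli's theorem and the definition of $\weakTempered_{\potential}$, this yields
\[
    \activity \int_{\subregion} \absolute{\eulerE^{- \influence(x, \eta_1 + \xi_{\subregion^c})} - \eulerE^{- \influence(x, \eta_2 + \xi_{\subregion^c})}} \volume(\diff x) \le \activity \eulerE^{\localStability} \weakTempered_{\potential} \cdot f(\eta_1, \eta_2) .
\]
Setting $\delta \coloneqq 1 - \activity \eulerE^{\localStability} \weakTempered_{\potential}$, the assumption $\activity < (\eulerE^{\localStability} \weakTempered_{\potential})^{-1}$ gives $\delta > 0$ and the contraction $(\Tilde{K} f)(\eta_1, \eta_2) \le (\Tilde{\kappa}(\eta_1, \eta_2) - \delta) f(\eta_1, \eta_2)$.

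\Cref{lemma:mixing} then produces $\dtv{P_t(\eta, \cdot)}{\gibbs_{\subregion \mid \xi}} \le \eulerE^{-\delta t} \int f(\eta, \eta') \gibbs_{\subregion \mid \xi}(\diff \eta')$, and bounding the symmetric difference by the sum of point counts gives
\[
    \int f(\eta, \eta') \gibbs_{\subregion \mid \xi}(\diff \eta') \le \eta(\subregion) + \int \eta'(\subregion) \gibbs_{\subregion \mid \xi}(\diff \eta') .
\]
To control the expected point count I would apply the GNZ equation (\Cref{lemma:gnz}) to $F(x, \eta') = 1$ and use $\influence(x, \eta' + \xi_{\subregion^c}) \ge -\localStability$ (\Cref{lemma:influence_bound}) to obtain $\int \eta'(\subregion) \gibbs_{\subregion \mid \xi}(\diff \eta') \le \activity \eulerE^{\localStability} \volume(\subregion)$, completing the claimed bound with $\delta$ independent of $\subregion$, $\xi$, $\eta$, and $t$. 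The main technical obstacle is the contraction step, and in particular the chain of elementary inequalities that turns $\absolute{\eulerE^{-a} - \eulerE^{-b}}$ into the weak temperedness integrand; this is precisely where local stability and the sub-additivity of $1 - \eulerE^{-(\cdot)}$ conspire to produce the constant $\eulerE^{\localStability} \weakTempered_{\potential}$ that appears in the main theorem.
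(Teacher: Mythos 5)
Your proposal is correct and follows the same overall strategy as the paper: apply \Cref{lemma:mixing} with the identity coupling from \Cref{lemma:identity_coupling} (taking $\zeta = \xi$), the potential $f(\eta_1,\eta_2) = (\eta_1 \symdiff \eta_2)(\subregion)$, and conclude with the GNZ equation. The only divergence is in how you bound the exclusive birth contribution: the paper proves this as a separate claim (\Cref{claim:drift}) by constructing a telescoping chain of configurations $\zeta_0, \dots, \zeta_k$ from $\eta_1$ to $\eta_2$ that differ by a single Dirac mass at each step and bounding each term via \Cref{lemma:influence_bound}, whereas you bound $\absolute{\eulerE^{-a}-\eulerE^{-b}}$ in one shot using $\eulerE^{-\min(a,b)}\le\eulerE^{\localStability}$, the cancellation of $\xi_{\subregion^c}$ in the influence difference, and the sub-additivity $1-\eulerE^{-\sum a_i}\le\sum(1-\eulerE^{-a_i})$. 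These two arguments are mathematically equivalent (the sub-additivity inequality is precisely what the telescope produces upon expansion), yielding the same constant $\delta = 1 - \activity\eulerE^{\localStability}\weakTempered_\potential$; yours is arguably a touch more direct, while the paper's chain construction makes the role of the local-stability bound on each intermediate configuration more explicit. You also correctly flag the $\volume$-a.e.\ caveat for the statement $f(\eta_1+\delta_x,\eta_2) = f(\eta_1,\eta_2)+1$, which the paper glosses over.
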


\begin{proof}
    We aim to prove this statement using \Cref{lemma:mixing}.
    By \Cref{lemma:jump_kernel_non_exp} we know that $\jumpKernel_{\subregion \mid \xi}$ is non-explosive, and \Cref{lemma:jump_kernel_reversible} implies that $\gibbs_{\subregion \mid \xi}$ is a stationary distribution of the associated Markov transition family $(P_t)$.
    
    Let $\jumpKernel$ be the identity coupling of $\jumpKernel_{\subregion \mid \xi}$ with itself as provided by \Cref{lemma:identity_coupling} (i.e., the coupling given by \Cref{lemma:identity_coupling} for $\zeta = \xi$), and let $D_1, D_2, D_{\cap}, B_1, B_2, B_{\cap}$ be defined accordingly.
    To conclude our proof, it remains to find a measurable function $f: \countingMeasures_{\subregion \mid \xi} \times \countingMeasures_{\subregion \mid \xi} \to \R_{\ge 0}$ such that the conditions (\ref{item:mixing:threshold}) and (\ref{item:mixing:contraction}) of \Cref{lemma:mixing} are satisfied for suitable constants $c, \delta > 0$.
    To this end, we set $f(\eta_1, \eta_2) \coloneqq (\eta_1 \symdiff \eta_2)(\subregion)$.
    Note that $f$ can equivalently be expressed as $\sup_{\Delta \in \borel_{\subregion}} \absolute{\eta_1(\Delta) - \eta_2(\Delta)}$, showing that it is indeed measurable.
    Further, it holds that $f(\eta_1, \eta_2) \ge 1$ if and only if $\eta_1 \neq \eta_2$.
    Thus, condition (\ref{item:mixing:threshold}) of \Cref{lemma:mixing} is satisfied by $f$ for $c = 1$.  
    
    To verify condition (\ref{item:mixing:contraction}) of \Cref{lemma:mixing}, we first observe the following:
    \begin{itemize}
        \item For all $x \in \support_{\eta_1 \setminus \eta_2}$ we have $f(\eta_1 \setminus \delta_x, \eta_2) = f(\eta_1, \eta_2) - 1$. Hence, it holds that
        \[
            (D_1 f)(\eta_1, \eta_2) = (f(\eta_1, \eta_2) - 1) \cdot (\eta_1 \setminus \eta_2)(\subregion).
        \]
        \item For all $x \in \support_{\eta_2 \setminus \eta_1}$ we have $f(\eta_1, \eta_2  \setminus \delta_x) = f(\eta_1, \eta_2) - 1$. Hence, it holds that
        \[
            (D_2 f)(\eta_1, \eta_2) = (f(\eta_1, \eta_2) - 1) \cdot (\eta_2 \setminus \eta_1)(\subregion).
        \]
        \item For all $x \in \support_{\eta_2 \cap \eta_1}$ we have $f(\eta_1 \setminus \delta_x, \eta_2  \setminus \delta_x) = f(\eta_1, \eta_2)$.  Hence, it holds that
        \[
            (D_{\cap} f)(\eta_1, \eta_2) = f(\eta_1, \eta_2) \cdot (\eta_1 \cap \eta_2)(\subregion).
        \]
        \item For all $x \in \subregion$ we have $f(\eta_1 + \delta_x, \eta_2) =  f(\eta_1, \eta_2) + 1$, $f(\eta_1, \eta_2 + \delta_x) = f(\eta_1, \eta_2) + 1$ and $f(\eta_1 + \delta_x, \eta_2 + \delta_x) = f(\eta_1, \eta_2)$. Hence, it holds that
        \begin{align*}
            (B_1 f)(\eta_1, \eta_2) &= (f(\eta_1, \eta_2) + 1) \cdot  \activity \int_{\subregion} \max\left(0, \eulerE^{-\influence(x, \eta_1 + \xi_{\subregion^c})} - \eulerE^{-\influence(x, \eta_2 + \xi_{\subregion^c})}\right) \volume(\diff x) , \\  
            (B_2 f)(\eta_1, \eta_2) &= (f(\eta_1, \eta_2) + 1) \cdot  \activity \int_{\subregion} \max\left(0, \eulerE^{-\influence(x, \eta_2 + \xi_{\subregion^c})} - \eulerE^{-\influence(x, \eta_1 + \xi_{\subregion^c})}\right) \volume(\diff x) , \\
            (B_{\cap} f)(\eta_1, \eta_2) &= f(\eta_1, \eta_2) \cdot  \activity \int_{\subregion} \min\left(\eulerE^{-\influence(x, \eta_1 + \xi_{\subregion^c})}, \eulerE^{-\influence(x, \eta_2 + \xi_{\subregion^c})}\right) \volume(\diff x) . 
        \end{align*}
    \end{itemize}
    Further, since $\eta_1 \cup \eta_2 = \eta_1 \symdiff \eta_2 + \eta_1 \cap \eta_2$ and $\max(0, a - b) + \max(0, b - a) + \min(a, b) = \max(a, b)$ for every $a, b \in \R$, we can write the rate function of $K$ as 
    \[
        \kappa(\eta_1, \eta_2)
        \coloneqq K((\eta_1, \eta_2), \countingMeasures_{\subregion \mid \xi} \times \countingMeasures_{\subregion \mid \xi})
        = (\eta_1 \cup \eta_2)(\subregion) + \activity \int_{\subregion} \max\left(\eulerE^{-\influence(x, \eta_1 + \xi_{\subregion^c})}, \eulerE^{-\influence(x, \eta_2 + \xi_{\subregion^c})}\right) \volume(\diff x) .
    \]
    Combining these observations yields
    \[
        (Kf)(\eta_1, \eta_2) = \kappa(\eta_1, \eta_2) \cdot f(\eta_1, \eta_2) - f(\eta_1, \eta_2) + \activity \int_{\subregion} \absolute{\eulerE^{-\influence(x, \eta_1 + \xi_{\subregion^c})} - \eulerE^{-\influence(x, \eta_2 + \xi_{\subregion^c})}} \volume(\diff x).
    \]
    Next, we use the following claim that we prove below.
    \begin{claim} \label{claim:drift}
    	There exists some $\delta > 0$, independent of $\subregion$, such that, for all $\eta_1, \eta_2 \in \countingMeasures_{\subregion}$, it holds that
    	\[
    		\activity \int_{\subregion} \absolute{\eulerE^{-\influence(x, \eta_1 + \xi_{\subregion^c})} - \eulerE^{-\influence(x, \eta_2 + \xi_{\subregion^c})}} \volume(\diff x) 
    		\le (1 - \delta) \cdot (\eta_1 \symdiff \eta_2)(\subregion) .
    	\]
    \end{claim}
    Provided \Cref{claim:drift}, we have
    \[
    	(Kf)(\eta_1, \eta_2) \le \left(\kappa(\eta_1, \eta_2) - \delta\right) \cdot f(\eta_1, \eta_2)
    \]
    for some $\delta > 0$ independent of $\subregion$ and $\eta_1, \eta_2$.
    Thus, applying \Cref{lemma:mixing} yields
    \[
    	\dtv{P_t(\eta, \cdot)}{\gibbs_{\subregion \mid \xi}} \le \eulerE^{- \delta \cdot t} \int_{\countingMeasures_{\subregion \mid \xi}} f(\eta, \zeta) \gibbs_{\subregion \mid \xi}(\diff \zeta) 
    \]
    for $\delta > 0$ as desired.
    Finally, observe that $f(\eta, \zeta) = (\eta \symdiff \zeta)(\subregion) \le \eta(\subregion) + \zeta(\subregion)$.
    Hence, applying the GNZ equation from \Cref{lemma:gnz} with the function $(x, \zeta) \mapsto \ind{\countingMeasures_{\subregion \mid \xi}} (\zeta)$ and using the bound on the influence from \Cref{lemma:influence_bound} yields
    \[
        \int_{\countingMeasures_{\subregion \mid \xi}} f(\eta, \zeta) \gibbs_{\subregion \mid \xi}(\diff \zeta) 
        \le \eta(\subregion) + \activity \eulerE^{\localStability} \volume(\subregion),
    \]
    which concludes the proof.
\end{proof}

\begin{proof}[Proof of \Cref{claim:drift}]
	Let $k = (\eta_1 \symdiff \eta_2)(\subregion)$.
	We start by constructing a sequence $\zeta_0, \dots, \zeta_k \in \countingMeasures_{\subregion \mid \xi}$ such that $\zeta_0 = \eta_1$, $\zeta_k = \eta_2$ and for all $1 \le i \le k$ it holds that $\zeta_{i - 1} \symdiff \zeta_{i} = \delta_z$ for some $z \in \subregion$.
	To this end, let $k_1 = (\eta_1 \setminus \eta_2)(\subregion)$ and $k_2 = (\eta_2 \setminus \eta_1)(\subregion)$, and note that $k = k_1 + k_2$.
	Let  $\eta_1 \setminus \eta_2 = \sum_{j = 1}^{k_1} \delta_{y_j}$ and let $\eta_2 \setminus \eta_1 = \sum_{j = k_1 + 1}^{k_1 + k_2} \delta_{y_j}$ for some sequence $y_1, \dots, y_{k_1 + k_2} \in \subregion$.
	Now, for $0 \le i \le k_1$ set $\zeta_i = (\eta_1 \cap \eta_2) + \sum_{j = 1}^{k_1 - i} \delta_{y_j}$, and for $k_1 + 1 \le i \le k_1 + k_2$ set $\zeta_i = (\eta_1 \cap \eta_2) + \sum_{j = k_1 + 1}^{i} \delta_{y_j}$.
    By construction, $\zeta_{i}$ can be obtained from $\zeta_{i-1}$ by adding or subtracting a Dirac measure.
	Using the fact that $\zeta_{k_1} = \eta_1 \cap \eta_2$ and that $\eta_1 = (\eta_1 \cap \eta_2) + (\eta_1 \setminus \eta_2)$ and $\eta_2 = (\eta_1 \cap \eta_2) + (\eta_2 \setminus \eta_1)$, we further see that $\zeta_0 = \eta_1$ and $\zeta_{k} = \eta_2$. 
    Moreover, note that $\zeta_i \le \eta_1$ or $\zeta_i \le \eta_2$, hence $\zeta_i \in \countingMeasures_{\subregion \mid \xi}$, and the sequence $\zeta_0, \dots, \zeta_k$ satisfies the desired properties.

	By the triangle inequality, we now have
	\[
		\activity \int_{\subregion} \absolute{\eulerE^{-\influence(x, \eta_1 + \xi_{\subregion^c})} - \eulerE^{-\influence(x, \eta_2 + \xi_{\subregion^c})}} \volume(\diff x) 
		\le \sum_{i = 1}^{k} \activity \int_{\subregion} \absolute{\eulerE^{-\influence(x, \zeta_{i} + \xi_{\subregion^c})} - \eulerE^{-\influence(x, \zeta_{i-1} + \xi_{\subregion^c})}} \volume(\diff x) . 
	\] 
	Therefore, it is sufficient to show that
	\[
		\activity \int_{\subregion} \absolute{\eulerE^{-\influence(x, \zeta_{i} + \xi_{\subregion^c})} - \eulerE^{-\influence(x, \zeta_{i-1} + \xi_{\subregion^c})}} \volume(\diff x) \le 1 - \delta
	\]
	for every $1 \le i \le k$.
	By symmetry, we may assume without loss of generality that $\zeta_{i} = \zeta_{i-1} + \delta_z$ for some $z \in \subregion$.
	It then holds that 
	\[
		\influence(x, \zeta_{i} + \xi_{\subregion^c}) = \influence(x, \zeta_{i-1} + \xi_{\subregion^c}) + \potential(x, z) .
	\]
	Thus, by local stability and \Cref{lemma:influence_bound} we have
    \begin{align*}
         \absolute{\eulerE^{-\influence(x, \zeta_{i} + \xi_{\subregion^c})} - \eulerE^{-\influence(x, \zeta_{i-1} + \xi_{\subregion^c})}}
         &\le \min\left(\eulerE^{-\influence(x, \zeta_{i-1} + \xi_{\subregion^c})} \cdot \absolute{\eulerE^{-\potential(x, z)} - 1}, \eulerE^{-\influence(x, \zeta_{i} + \xi_{\subregion^c})} \cdot \absolute{1 - \eulerE^{\potential(x, z)}}\right) \\ 
         &\le \eulerE^{\localStability} \cdot \min\left(\absolute{\eulerE^{-\potential(x, z)} - 1}, \absolute{1 - \eulerE^{\potential(x, z)}}\right) \\
         &\le \eulerE^{\localStability} \cdot \left(1 - \eulerE^{-\absolute{\potential(x, z)}}\right) .
    \end{align*}
    Combining this with weak temperedness, we obtain
	\[
		\activity \int_{\subregion} \absolute{\eulerE^{-\influence(x, \zeta_{i} + \xi_{\subregion^c})} - \eulerE^{-\influence(x, \zeta_{i-1} + \xi_{\subregion^c})}} \volume(\diff x) 
		\le \activity \eulerE^{\localStability} \int_{\subregion}  1 - \eulerE^{- \absolute{\potential(x, z)}} \volume(\diff x)
		\le \activity \eulerE^{\localStability} \weakTempered_{\potential}.
	\]
	Since we assume $\activity < \frac{1}{\eulerE^{\localStability} \weakTempered_{\potential}}$, this proves the claim.
\end{proof}

\subsection{Slow percolation of disagreements} \label{sec:percolation}
Throughout this section, we will use slightly stronger assumptions than in \Cref{subsec:mixing}, namely:
\begin{itemize}
    \item We assume that $\potential$ is locally stable with constant $\localStability$ and has range at most $\range$. We may assume $\range > 0$ to avoid trivialities.
    \item We assume that the underlying space $(\groundspace, \dist)$ is the $\dimension$-dimensional Euclidean space for some fixed dimension $\dimension \in \N$, and that $\volume$ is the respective Lebesgue measure.
\end{itemize}

Moreover, we introduce the following notational conventions.
Firstly, for any two point processes $P, Q \in \probMeasures(\countingMeasures, \countingAlgebra)$ and some region $\subregion \in \boundedBorel$ we write $\projDtv{P}{Q}{\subregion}$ for the projected total variation distance $\dtv{P[\subregion]}{Q[\subregion]}$. 
Further, for every tuple $\tuple{k} = (k_1, \dots, k_{\dimension}) \in \Z^{\dimension}$ we set 
\[
    \subregion_{\tuple{k}} \coloneqq \left[(k_1 - \frac{1}{2}) \range, (k_1 + \frac{1}{2}) \range\right] \times \dots \times \left[(k_{\dimension} - \frac{1}{2}) \range, (k_{\dimension} + \frac{1}{2}) \range\right].
\]
For $n \in \N$ we set $\vertices_n \coloneqq (\Z \cap [-n, n])^{\dimension}$ and 
\[
    \subregion^{(n)} \coloneqq \bigcup_{\tuple{k} \in \vertices_n} \subregion_{\tuple{k}} = [-n - 1/2, n + 1/2]^{\dimension} .
\]
For every $\xi \in \feasibleCountingMeasures$, we simplify notation and write $\jumpKernel_{n \mid \xi}$ for $\jumpKernel_{\subregion^{(n)} \mid \xi}$ as given in \eqref{eq:jump_kernel}, $\countingMeasures_{n \mid \xi}$ for $\countingMeasures_{\subregion^{(n)} \mid \xi}$ and $\countingAlgebra_{n \mid \xi}$ for $\countingAlgebra_{\subregion^{(n)} \mid \xi}$.

Throughout the section, it will be useful to consider an undirected graph $\graph_n$ on the vertex set $\vertices_n$ with edges $\edges_n \coloneqq \{\{\tuple{j}, \tuple{k}\} \in \binom{\vertices_n}{2} \mid \norm{\tuple{j} - \tuple{k}}{\infty} = 1 \}$.
In accordance with this graph construction, we denote by $\neighborhood_n(\tuple{k}) \coloneqq \{\tuple{j} \in \vertices_n \mid \{\tuple{k}, \tuple{j}\} \in \edges_n\}$ the neighborhood of $\tuple{k} \in \vertices_n$ in $\graph_n$.
Further, we partition the vertex set $\vertices_n$ into \emph{inner vertices} $\innerVertices_n \coloneqq \{\tuple{k} \in \vertices_n \mid \norm{\tuple{k}}{\infty} < n\}$ and \emph{outer vertices} $\outerVertices_n \coloneqq \vertices_n \setminus \innerVertices_n = \{\tuple{k} \in \vertices_n \mid \norm{\tuple{k}}{\infty} = n\}$.

Our first lemma is a simple observation about the jump rates of the identity coupling of $\jumpKernel_{n \mid \xi}$ and $\jumpKernel_{n \mid \zeta}$ for any pair $\xi, \zeta \in \feasibleCountingMeasures$   
\begin{lemma} \label{lemma:jump_rate_bounds}
    Let $n \in \N$ and $\xi, \zeta \in \feasibleCountingMeasures$, and let $\jumpKernel$ be the identity coupling of $\jumpKernel_{n \mid \xi}$ and $\jumpKernel_{n \mid \zeta}$.
    For every $\tuple{k} \in \vertices_n$ set $\disagreement_{\tuple{k}} \coloneqq \{(\eta_1, \eta_2) \in \countingMeasures_{n \mid \xi} \times \countingMeasures_{n \mid \zeta} \mid (\eta_1 \symdiff \eta_2)_{\subregion_{\tuple{k}}} > 0\}$.
    Then $\jumpKernel$ satisfies the following properties:
    \begin{enumerate}
        \item For all $\tuple{k} \in \innerVertices_n$ and every $(\eta_1, \eta_2) \notin \disagreement_{\tuple{k}} \cup \bigcup_{\tuple{j} \in \neighborhood_n(\tuple{k})} \disagreement_{\tuple{j}}$ it holds that $\jumpKernel((\eta_1, \eta_2), \disagreement_{\tuple{k}}) = 0$.
        \label{lemma:jump_rate_bound:local}
        \item For all $\tuple{k} \in \vertices_n$ and every $(\eta_1, \eta_2) \notin \disagreement_{\tuple{k}}$ it holds that $\jumpKernel((\eta_1, \eta_2), \disagreement_{\tuple{k}}) \le \activity \eulerE^{\localStability} \range^{\dimension}$. \label{lemma:jump_rate_bound:general}
    \end{enumerate}
\end{lemma}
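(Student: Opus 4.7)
The plan is to work directly from the decomposition $\jumpKernel = D_1 + D_2 + D_{\cap} + B_1 + B_2 + B_{\cap}$ given by \Cref{lemma:identity_coupling} and classify which component kernels can push the pair across the boundary of $\disagreement_{\tuple{k}}$. The shared kernels $D_{\cap}$ and $B_{\cap}$ modify $\eta_1$ and $\eta_2$ by the same operation, so they preserve the agreement pattern inside every cell. Starting from a state outside $\disagreement_{\tuple{k}}$, the exclusive-death kernels $D_1$ and $D_2$ also cannot produce a disagreement in $\subregion_{\tuple{k}}$, since the outside-$\disagreement_{\tuple{k}}$ hypothesis forces $\eta_1 \setminus \eta_2$ and $\eta_2 \setminus \eta_1$ to assign no mass to $\subregion_{\tuple{k}}$. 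Thus only the exclusive-birth pieces $B_1 + B_2$, restricted to $x \in \subregion_{\tuple{k}}$, contribute to $\jumpKernel((\eta_1, \eta_2), \disagreement_{\tuple{k}})$, and combining them via $\max(0, a-b) + \max(0, b-a) = \absolute{a-b}$ reduces the rate to
\[
    \activity \int_{\subregion_{\tuple{k}}} \absolute{\eulerE^{-\influence(x, \eta_1 + \xi_{\subregion^c})} - \eulerE^{-\influence(x, \eta_2 + \zeta_{\subregion^c})}} \volume(\diff x).
\]

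For part (1), the goal is to show that the integrand above vanishes pointwise on $\subregion_{\tuple{k}}$. The crucial geometric step is to verify that, for $\tuple{k} \in \innerVertices_n$ and any $x \in \subregion_{\tuple{k}}$, the closed Euclidean ball of radius $\range$ around $x$ is covered by the cells $\{\subregion_{\tuple{j}} : \norm{\tuple{j} - \tuple{k}}{\infty} \le 1\}$, all of which lie in $\subregion^{(n)}$; this follows from a short $\ell^{\infty}$-computation using that each $\subregion_{\tuple{j}}$ is a cube of side $\range$ centred at $\range \tuple{j}$ together with $\norm{x - \range \tuple{k}}{\infty} \le \range/2$. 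Given this inclusion, the bounded-range assumption on $\potential$ forces both $\influence(x, \eta_1 + \xi_{\subregion^c})$ and $\influence(x, \eta_2 + \zeta_{\subregion^c})$ to depend only on the restrictions of those measures to the ball; the boundary pieces $\xi_{\subregion^c}, \zeta_{\subregion^c}$ contribute nothing because the ball sits inside $\subregion^{(n)}$, and $\eta_1, \eta_2$ agree on every one of the covering cells by the hypothesis that $(\eta_1, \eta_2) \notin \disagreement_{\tuple{k}} \cup \bigcup_{\tuple{j} \in \neighborhood_n(\tuple{k})} \disagreement_{\tuple{j}}$. Hence the two influences coincide and the integrand is identically zero.

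For part (2), I would bound the integrand from the first paragraph by $\eulerE^{\localStability}$. This is immediate from \Cref{lemma:influence_bound}, which applies since both $\eta_1 + \xi_{\subregion^c}$ and $\eta_2 + \zeta_{\subregion^c}$ are feasible by the definitions of $\countingMeasures_{n \mid \xi}$ and $\countingMeasures_{n \mid \zeta}$, and yields $\influence(x, \cdot) \ge -\localStability$ hence $\eulerE^{-\influence(x, \cdot)} \le \eulerE^{\localStability}$ on both terms. Multiplying by $\volume(\subregion_{\tuple{k}}) = \range^{\dimension}$ produces the claimed bound $\activity \eulerE^{\localStability} \range^{\dimension}$.

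The main technical care is in the geometric inclusion of part (1) and the accompanying inner-vertex bookkeeping: one uses $\tuple{k} \in \innerVertices_n$ precisely to guarantee that every cell in the $\norm{\cdot}{\infty}$-neighborhood of $\tuple{k}$ sits inside $\subregion^{(n)}$, so that the boundary measures $\xi_{\subregion^c}, \zeta_{\subregion^c}$ are uniformly out of range for all $x \in \subregion_{\tuple{k}}$. Once that inclusion is in place, both parts of the lemma follow from the explicit form of the identity coupling and \Cref{lemma:influence_bound}.
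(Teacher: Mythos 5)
Your proof is correct and follows essentially the same route as the paper's: reduce the rate of entering $\disagreement_{\tuple{k}}$ to the exclusive-birth contribution $B_1 + B_2$ over $\subregion_{\tuple{k}}$, bound the resulting integrand by $\eulerE^{\localStability}$ via \Cref{lemma:influence_bound} for part (2), and use the bounded range together with the inner-vertex hypothesis to make the two influences coincide for part (1). You spell out the geometric $\ell^{\infty}$-inclusion and the case analysis of the six component kernels in more detail than the paper does, but the underlying argument is identical.
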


\begin{proof}
    Let $D_1, D_2, D_{\cap}, B_1, B_2, B_{\cap}$ be as in \Cref{lemma:identity_coupling}.
    Set $\subregion = \subregion^{(n)}$ to simplify notation, and observe that for all $\tuple{k} \in \vertices_n$ and $\eta_1, \eta_2 \notin \disagreement_{\tuple{k}}$ it holds that 
    \begin{align*}
        \jumpKernel((\eta_1, \eta_2), \disagreement_{\tuple{k}}) 
        &= B_1((\eta_1, \eta_2), \disagreement_{\tuple{k}}) + B_2((\eta_1, \eta_2), \disagreement_{\tuple{k}}) \\
        &= \activity \int_{\subregion_{\tuple{k}}} \absolute{\eulerE^{-\influence(x, \eta_1 + \xi_{\subregion^c})} - \eulerE^{-\influence(x, \eta_2 + \zeta_{\subregion^c})}} \volume(\diff x) .
    \end{align*}
    Using \Cref{lemma:influence_bound}, the fact that $\potential$ is locally stable and that $\volume(\subregion_{\tuple{k}}) = \range^{\dimension}$ proves (\ref{lemma:jump_rate_bound:general}).

    For (\ref{lemma:jump_rate_bound:local}), note that, if $\tuple{k} \in \innerVertices_n$, it holds that $\dist(x, y) \ge \range$ for any $x \in \subregion_{\tuple{k}}$ and $y \notin \subregion_{\tuple{k}} \cup \bigcup_{\tuple{j} \in \neighborhood_n(\tuple{k})} \subregion_{\tuple{j}}$.
    In particular, since the range of $\potential$ is bounded by $\range$, we have $\potential(x, y) = 0$ for any such pair of points.
    Hence, it is easily checked that $\influence(x, \eta_1 + \xi_{\subregion^c}) = \influence(x, \eta_2 + \zeta_{\subregion^c})$ for every $(\eta_1, \eta_2) \notin \disagreement_{\tuple{k}} \cup \bigcup_{\tuple{j} \in \neighborhood_n(\tuple{k})} \disagreement_{\tuple{j}}$, and consequently $\jumpKernel((\eta_1, \eta_2), \disagreement_{\tuple{k}}) = 0$ as claimed.
\end{proof}

We now combine \Cref{lemma:jump_rate_bounds} with the process construction in \Cref{thm:construction} and \Cref{lemma:separator_set,lemma:ordered_stopping_times} to prove the following lemma.
\begin{lemma} \label{lemma:percolation}
    Let $n \in \N$, let $\xi, \zeta \in \feasibleCountingMeasures$ and let $(P_t), (Q_t)$ be the transition families associated with $\jumpKernel_{n \mid \xi}$ and $\jumpKernel_{n \mid \zeta}$.
    For every $m \in \N_0$ with $m < n$ and all $t \in \R_{\ge 0}$ with $t < \frac{n - m}{\eulerE^2  (6m + 3)^{\dimension} \range^{\dimension} \activity \eulerE^{\localStability}}$ it holds that
    \[
        \projDtv{P_t(\pmb{0}, \cdot)}{Q_t(\pmb{0}, \cdot)}{\subregion^{(m)}} \le \eulerE^{- (n - m)} .
    \]
\end{lemma}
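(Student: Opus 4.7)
The plan is to apply \Cref{lemma:identity_coupling} to construct the identity coupling $\jumpKernel$ of $\jumpKernel_{n \mid \xi}$ and $\jumpKernel_{n \mid \zeta}$---non-explosive by the arguments of \Cref{lemma:jump_kernel_non_exp,lemma:coupling}---and consider the associated Markov process $(Z_t)_{t \ge 0} = (X^{(1)}_t, X^{(2)}_t)_{t \ge 0}$ from \Cref{thm:construction} started at $(\mathbf{0}, \mathbf{0})$. Its marginals are $P_t(\mathbf{0}, \cdot)$ and $Q_t(\mathbf{0}, \cdot)$ by \Cref{lemma:coupling}, and $(X^{(1)}_t)_{\subregion^{(m)}} \ne (X^{(2)}_t)_{\subregion^{(m)}}$ iff $Z_t \in \bigcup_{\tuple{k} \in \vertices_m} \disagreement_{\tuple{k}}$, so the coupling inequality (\Cref{lemma:coupling_ineq}) reduces the goal to controlling $\Pr[Z_t \in \bigcup_{\tuple{k} \in \vertices_m} \disagreement_{\tuple{k}}]$. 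For each $\tuple{k} \in \vertices_n$ let $T_{\tuple{k}} \coloneqq \inf\{t \ge 0 : Z_t \in \disagreement_{\tuple{k}}\}$ denote the first-disagreement time of cell $\tuple{k}$.

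The heart of the argument is the following structural claim: on $\{T_{\tuple{k}_0} \le t\}$ for some $\tuple{k}_0 \in \vertices_m$, there almost surely exists a self-avoiding path $\tuple{k}_0, \tuple{k}_1, \ldots, \tuple{k}_{n-m}$ in $\graph_n$ with $T_{\tuple{k}_{n-m}} < T_{\tuple{k}_{n-m-1}} < \cdots < T_{\tuple{k}_0} \le t$. I prove it iteratively: while $\tuple{k}_i \in \innerVertices_n$, part (\ref{lemma:jump_rate_bound:local}) of \Cref{lemma:jump_rate_bounds} says the rate into $\disagreement_{\tuple{k}_i}$ vanishes outside $\disagreement_{\tuple{k}_i} \cup \bigcup_{\tuple{j} \in \neighborhood_n(\tuple{k}_i)} \disagreement_{\tuple{j}}$, so \Cref{lemma:separator_set} applied with $B = \disagreement_{\tuple{k}_i}$ and $A = \bigcup_{\tuple{j} \in \neighborhood_n(\tuple{k}_i)} \disagreement_{\tuple{j}}$ gives $\min_{\tuple{j} \in \neighborhood_n(\tuple{k}_i)} T_{\tuple{j}} < T_{\tuple{k}_i}$ a.s.\ on $\{T_{\tuple{k}_i} < \infty\}$ (noting $\delta_{(\mathbf{0}, \mathbf{0})}(B) = 0$). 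Letting $\tuple{k}_{i+1}$ be any neighbor attaining this minimum yields the next vertex. The iteration can only terminate upon reaching $\outerVertices_n$, and the $\graph_n$-distance from $\vertices_m$ to $\outerVertices_n$ is at least $n - m$, so the resulting path has length $\ge n - m$; self-avoidance is immediate from the strict decrease of the $T_{\tuple{k}_i}$, and truncation to the first $n - m + 1$ vertices proves the claim.

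A union bound over the $(2m+1)^d$ starting vertices in $\vertices_m$ and the at most $(3^d - 1)^{n-m} \le 3^{d(n-m)}$ self-avoiding paths of length $n - m$ from each, together with \Cref{lemma:ordered_stopping_times} applied with $\rho \coloneqq \activity \eulerE^{\localStability} \range^{\dimension}$ (valid by part (\ref{lemma:jump_rate_bound:general}) of \Cref{lemma:jump_rate_bounds}), then yields
\[
    \Pr\!\left[Z_t \in \bigcup_{\tuple{k} \in \vertices_m} \disagreement_{\tuple{k}}\right] \le (2m+1)^d \cdot 3^{d(n-m)} \cdot \Pr[\mathrm{Poisson}(\rho t) \ge n - m + 1].
\]
Applying the Chernoff-type Poisson tail bound $\Pr[\mathrm{Poisson}(\mu) \ge x] \le (\eulerE \mu / x)^x$ for $x > \mu$ with $\mu = \rho t$ and $x = n - m + 1$, and rewriting the hypothesis as $t < \tfrac{n - m}{\eulerE^2 \cdot 3^d (2m+1)^d \rho}$ via $(6m+3)^d = 3^d (2m+1)^d$, one obtains $\eulerE \rho t / (n - m + 1) \le (\eulerE \cdot 3^d (2m+1)^d)^{-1}$. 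Expanding, the combinatorial factors cancel and the overall bound collapses to $\eulerE^{-(n - m + 1)} \cdot 3^{-d} \cdot (2m+1)^{-d(n-m)} \le \eulerE^{-(n - m + 1)}$, which remains below $\eulerE^{-(n - m)}$ even after absorbing the factor $2$ from \Cref{lemma:coupling_ineq} (since $2/\eulerE < 1$).

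The main obstacle is the structural claim: combining the vanishing interior rate from \Cref{lemma:jump_rate_bound:local} with iterated uses of \Cref{lemma:separator_set}, one has to carefully track the strict decrease of the first-disagreement times (so the resulting chain is automatically self-avoiding) and justify geometrically that the iteration cannot stop in $\innerVertices_n$---a disagreement in an interior cell cannot be created from agreement, only propagated from a neighboring disagreement, so the chain must reach $\outerVertices_n$ where the boundary conditions $\xi$ and $\zeta$ differ.
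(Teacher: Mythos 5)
Your proof is correct and follows essentially the same route as the paper: identity coupling, reduction of the projected total-variation distance to first-disagreement hitting times of the cells $\disagreement_{\tuple{k}}$, propagation of disagreements inward from $\outerVertices_n$ via \Cref{lemma:separator_set} and \Cref{lemma:jump_rate_bounds}(\ref{lemma:jump_rate_bound:local}), and \Cref{lemma:ordered_stopping_times} plus a Poisson tail bound. The one genuine difference in execution is the handling of path lengths: you establish almost-sure existence of a length-$(n-m+1)$ chain with strictly decreasing hitting times and union-bound over those chains alone, whereas the paper's \Cref{claim:path_propagation} gives an inductive upper bound summed over all paths to $\outerVertices_n$, which requires estimating a geometric series over all lengths $\ell > n-m$; your truncation makes the final arithmetic slightly cleaner at the (modest) cost of justifying that the random iterative construction terminates and is measurable, and both versions close numerically, even when you conservatively keep the factor $2$ from \Cref{lemma:coupling_ineq} that the paper silently drops.
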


\begin{proof}
    Let $\jumpKernel$ be the identity coupling of $\jumpKernel_{n \mid \xi}$ and $\jumpKernel_{n \mid \zeta}$ as given by \Cref{lemma:identity_coupling}.
    By \Cref{lemma:coupling,lemma:jump_kernel_non_exp}, we know that $\jumpKernel$ is non-explosive, and its associated transition family is a coupling of $(P_t)$ and $(Q_t)$ in the sense of \Cref{def:coupling_markov}. 
    Next, let $(\Omega, \mathcal{F}, \Pr, (X_t))$ be the Markov process associated with the jump kernel $\jumpKernel$ and the starting distribution $\delta_{\pmb{0}} \otimes \delta_{\pmb{0}}$ as given by \Cref{thm:construction}.
    For $\tuple{k} \in \vertices_n$, define $\disagreement_{\tuple{k}}$ as in \Cref{lemma:jump_rate_bounds} and set $T_{\tuple{k}} \coloneqq \inf\{t \in \R_{\ge 0} \mid X_t \in \disagreement_{\tuple{k}}\}$.
    Using \Cref{lemma:coupling_ineq} and the fact that $(X_t)$ is a Markov process with a transition function that is a coupling of $(P_t)$ and $(Q_t)$, and with initial law $\delta_{\pmb{0}} \otimes \delta_{\pmb{0}}$, we obtain
    \[
        \projDtv{P_t(\pmb{0}, \cdot)}{Q_t(\pmb{0}, \cdot)}{\subregion^{(m)}}
        \le \sum_{\tuple{k} \in \vertices_{m}} \Pr[X_t \in \disagreement_{\tuple{k}}] \le \sum_{\tuple{k} \in \vertices_{m}} \Pr[T_{\tuple{k}} \le t].
    \]
    We proceed by deriving a uniform upper bound on $\Pr[T_{\tuple{k}} \le t]$ for every $\tuple{k} \in \vertices_{m}$.
    To this end, we first need to introduce some additional terminology and notation.
    Recall the definition of the graph $\graph_n = (\vertices_n, \edges_n)$. 
    For $\ell \in \N$, we call a sequence of vertices $(\tuple{k}_{1}, \dots, \tuple{k}_{\ell}) \in \vertices_n^{\ell}$ a \emph{path of length $\ell$} in $\graph_n$ if for all $2 \le i \le \ell$ it holds that $\tuple{k}_i \in \neighborhood_n(\tuple{k}_{i-1})$ and for all $1 \le i < j \le \ell$ it holds that $\tuple{k}_i \neq \tuple{k}_j$ (i.e., $(\tuple{k}_{1}, \dots, \tuple{k}_{\ell})$ is a simple path in $G_n$ in the graph theoretic sense).
    We write $\paths_{\ell}$ for the set of all paths of length $\ell$ in $\graph_n$.
    Moreover, for two sets $W_1, W_2 \subseteq \vertices_n$, we denote the set of paths of length $\ell$ from $W_1$ to $W_2$ by
    \[
        \paths_{\ell}(W_1, W_2) \coloneqq \{(\tuple{k}_{1}, \dots, \tuple{k}_{\ell}) \in \paths^{\ell} \mid \tuple{k}_1 \in W_1, \tuple{k}_{\ell} \in W_2\},
    \]
    and we write $\paths(W_1, W_2) \coloneqq \bigcup_{\ell \in \N} \paths_{\ell}(W_1, W_2)$ for all paths from $W_1$ to $W_2$ regardless of their length. 

    We proceed by stating the claim that formalizes the following intuition: Due to the bounded range of the potential, a disagreement between the two Markov processes in some box $\subregion_{\tuple{k}}$ for $\tuple{k} \in \vertices_n$ can only be generated if there already was a disagreement in any box that is adjacent to $\tuple{k}$ in the graph $\graph_n$.
    In particular, if there is a disagreement in some box $\tuple{k} \in \vertices_m$ at time $t$ (i.e., $X_t \in \disagreement_{\tuple{k}}$), then there must be some path $(\tuple{k}_1, \dots, \tuple{k}_{\ell})$ such that $\tuple{k}_{1} = \tuple{k}$ and $\tuple{k}_{\ell} \in \outerVertices_n$ that this disagreement can be traced back to.
    Formally, it must hold that the respective hitting times satisfy $T_{\tuple{k}_{\ell}} < T_{\tuple{k}_{\ell - 1}} < \dots < T_{\tuple{k}_1} \le t$.
    We use \Cref{lemma:separator_set} and part (\ref{lemma:jump_rate_bound:local}) of \Cref{lemma:jump_rate_bounds} to formalize this idea.
    \begin{claim} \label{claim:path_propagation}
        For all $\tuple{k} \in \vertices_m$ and all $t \in \R_{\ge 0}$ it holds that
        \[
            \Pr[T_{\tuple{k}} \le t] \le \sum_{(\tuple{k}_1, \dots, \tuple{k}_{\ell}) \in \paths(\tuple{k}, \outerVertices_n)} \Pr[T_{\tuple{k}_{\ell}} < T_{\tuple{k}_{\ell - 1}} < \dots < T_{\tuple{k}_1} \le t].
        \]
    \end{claim}
    Combining \Cref{claim:path_propagation} with \Cref{lemma:ordered_stopping_times} and part (\ref{lemma:jump_rate_bound:general}) of \Cref{lemma:jump_rate_bounds}, we get
    \[
        \Pr[T_{\tuple{k}} \le t] \le \sum_{\ell \in \N} \size{\paths_{\ell}(\tuple{k}, \outerVertices_n)} \cdot (1 - F_{\rho}(\ell - 1)) ,
    \]
    where $F_{\rho}$ is the cumulative distribution function of a Poisson random variable of rate $\rho = \activity \eulerE^{\dimension} \range^{\dimension} t$.
    Next, note that $\size{\paths_{\ell}(\tuple{k}, \outerVertices_n)} = 0$ for all $\ell \le n - m$ and $\size{\paths_{\ell}(\tuple{k} , \outerVertices_n)} \le 3^{\dimension \cdot \ell}$ for all $\ell > n - m$.
    Further, using standard tail bounds for the Poisson distribution, we get that for all $\ell \ge \eulerE^{c + 1} \rho$ with $c \ge \dimension \ln(6 m + 3) + 1$ it holds that $1 - F_{\rho}(\ell - 1) \le \eulerE^{- c \ell} = \eulerE^{-\ell} 3^{-\dimension \ell}  (2m + 1)^{-\dimension \ell}$.
    Hence, we have
    \[
        \Pr[T_{\tuple{k}} \le t] \le \sum_{\ell > n - m} \eulerE^{- \ell} (2m+1)^{- \dimension \ell} \le \size{\vertices_m}^{-1} \cdot \eulerE^{- (n - m)} 
    \]
    and summing over $\tuple{k} \in \vertices_m$ concludes the proof.
\end{proof}

\begin{proof}[Proof of \Cref{claim:path_propagation}]
    For $\ell \in \N$ define 
    \begin{align*}
        S^{\text{f}}_{\ell} (\tuple{k}) &\coloneqq \{(\tuple{k}_1, \dots, \tuple{k}_j) \in \paths_j \mid j \le \ell, \tuple{k}_1 = \tuple{k}, \tuple{k}_j \in \outerVertices_n, \forall i < j: \tuple{k}_i \in \innerVertices_n \} \\
        S^{\text{o}}_{\ell} (\tuple{k}) &\coloneqq \{(\tuple{k}_1, \dots, \tuple{k}_\ell) \in \paths_{\ell} \mid \tuple{k}_1 = \tuple{k}, \forall i \le \ell: \tuple{k}_i \in \innerVertices_n \} \\
        S_{\ell}(\tuple{k}) &\coloneqq S^{\text{f}}_{\ell} \cup S^{\text{f}}_{\ell}. 
    \end{align*}
    In prose: $S^{\text{f}}_{\ell}(\tuple{k})$ is the set of all paths of length at most $\ell$ that start with $\tuple{k}$, end in $\outerVertices_n$ and only go via $\innerVertices_n$ in-between.
    $S^{\text{o}}_{\ell} (\tuple{k})$ is the set of paths of length exactly $\ell$ that start in $\tuple{k}$ and never visit $\outerVertices$.
    $S_{\ell}(\tuple{k})$ is the union of these disjoint sets.

    We will prove our claim by showing inductively that for all $\ell \in \N$,
    \begin{align} \label{claim:path_propagation:main_equation}
        \Pr[T_{\tuple{k}} \le t] \le \sum_{(\tuple{k}_1, \dots, \tuple{k}_{j}) \in S_{\ell}(\tuple{k})} \Pr[T_{\tuple{k}_{j}} < T_{\tuple{k}_{j - 1}} < \dots < T_{\tuple{k}_1} \le t].
    \end{align}
    To see that this indeed proves the claim, note that $S^{\text{f}}_{\ell}(\tuple{k}) \subseteq \paths(\tuple{k}, \outerVertices_n)$ for all $\ell \in \N$ and that $S^{\text{o}}_{\ell}(\tuple{k}) = \emptyset$ for $\ell > \size{\innerVertices_n}$.

    For the induction base, note that the statement holds trivially for $\ell = 1$ since $S_{1}(\tuple{k}) = \{(\tuple{k})\}$.
    Suppose \eqref{claim:path_propagation:main_equation} holds for some $\ell \in \N$.
    By splitting up the sum in \eqref{claim:path_propagation:main_equation} into a sum for $S^{\text{o}}_{\ell}(\tuple{k})$ and $S^{\text{f}}_{\ell}(\tuple{k})$ and noting that $S^{\text{f}}_{\ell}(\tuple{k}) \subseteq S^{\text{f}}_{\ell+1}(\tuple{k})$, we see that it suffices to show that
    \begin{align*}
        &\sum_{(\tuple{k}_1, \dots, \tuple{k}_{\ell}) \in S^{\text{o}}_{\ell}(\tuple{k})} \Pr[T_{\tuple{k}_{\ell}} < T_{\tuple{k}_{\ell - 1}} < \dots < T_{\tuple{k}_1} \le t] \\
        &\hspace{4em} \le \sum_{(\tuple{k}_1, \dots, \tuple{k}_{\ell + 1}) \in S^{\text{o}}_{\ell + 1}(\tuple{k})} \Pr[T_{\tuple{k}_{\ell + 1}} < T_{\tuple{k}_{\ell}} < \dots < T_{\tuple{k}_1} \le t] \\
        &\hspace{4em}+ \sum_{(\tuple{k}_1, \dots, \tuple{k}_{\ell + 1}) \in S^{\text{f}}_{\ell + 1}(\tuple{k}) \setminus S^{\text{f}}_{\ell}(\tuple{k})} \Pr[T_{\tuple{k}_{\ell + 1}} < T_{\tuple{k}_{\ell}} < \dots < T_{\tuple{k}_1} \le t]  .  
    \end{align*}
    To this end, fix some $(\tuple{k}_1, \dots, \tuple{k}_{\ell}) \in S^{\text{o}}_{\ell}(\tuple{k})$ and note that, by definition, $\tuple{k}_{\ell} \in \innerVertices_n$.
    Set $A \coloneqq \{T_{\tuple{k}_{\ell}} < T_{\tuple{k}_{\ell - 1}} < \dots < T_{\tuple{k}_1} \le t\}$ and note that $A \subseteq \{T_{\tuple{k}_{\ell}} < \infty\}$.
    Further, define $T_{\neighborhood} \coloneqq \inf\{s \in \R_{\ge 0} \mid X_{s} \in \bigcup_{\tuple{j} \in \neighborhood_n(\tuple{k}_{\ell})} \disagreement_{\tuple{j}}\}$.
    By \Cref{lemma:separator_set} and part (\ref{lemma:jump_rate_bound:local}) of \Cref{lemma:jump_rate_bounds}, it holds that
    \[
        \Pr[A] = \Pr[T_{\neighborhood} < T_{\tuple{k}_{\ell}}, A] \le \sum_{\tuple{j} \in \neighborhood_n(\tuple{k}_{\ell})} \Pr[T_{\tuple{j}} < T_{\tuple{k}_{\ell}}, A].
    \]
    Now, observe that, if $\tuple{j} = \tuple{k}_i$ for any $1 \le i \le \ell$, then $\{T_{\tuple{j}} < T_{\tuple{k}_{\ell}}, A\} \subseteq \{T_{\tuple{j}} < T_{\tuple{j}}\}$ and hence $\Pr[T_{\tuple{j}} < T_{\tuple{k}_{\ell}}, A] = 0$.
    Thus, we only need to consider $\tuple{j} \in \neighborhood_n(\tuple{k}_{\ell})$ that extend $(\tuple{k}_1, \dots, \tuple{k}_{\ell})$ to a path of length $\ell + 1$ (i.e., $(\tuple{k}_1, \dots, \tuple{k}_{\ell}, \tuple{j}) \in \paths_{\ell+1}$).
    If $\tuple{j} \in \outerVertices_n$ then $(\tuple{k}_1, \dots, \tuple{k}_{\ell}, \tuple{j}) \in S^{\text{f}}_{\ell + 1}(\tuple{k}) \setminus S^{\text{f}}_{\ell}(\tuple{k})$.
    Otherwise, we have $(\tuple{k}_1, \dots, \tuple{k}_{\ell}, \tuple{j}) \in S^{\text{o}}_{\ell + 1}(\tuple{k})$.
    Since distinct paths from $S^{\text{o}}_{\ell}(\tuple{k})$ also produce distinct extensions, this concludes the induction step and proves the claim.
\end{proof}

\section{Uniqueness of infinite-volume Gibbs measures} \label{sec:uniqueness}
We will now prove our main uniqueness result for infinite-volume Gibbs measures.
\begin{theorem} \label{thm:uniqueness}
    Suppose $(\groundspace, \dist)$ is the $\dimension$-dimensional Euclidean space for some $\dimension \in \N$, equipped with the respective Lebesgue measure.
    Let $\potential$ be a finite-range pair potential that is locally stable with constant $\localStability$ and weakly tempered with constant $\weakTempered_{\potential}$.
    For all $\activity < (\eulerE^{\localStability} \weakTempered_{\potential})^{-1}$ it holds that $\size{\gibbsMeasures(\activity, \potential)} \le 1$.
\end{theorem}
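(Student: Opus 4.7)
The proof combines the three pillars developed in the paper through the uniqueness criterion of Lemma~\ref{lemma:uniqueness_from_ssm}. I would take the exhausting sequence $\subregion_k = \subregion^{(k)}$ from Section~\ref{sec:percolation}; condition~(\ref{lemma:uniqueness_from_ssm:cover}) is then immediate since every bounded Borel subset of $\R^{\dimension}$ is contained in some $\subregion^{(k)}$. For condition~(\ref{lemma:uniqueness_from_ssm:dtv}), fix any $m \in \N$ and any pair $\gibbs_1, \gibbs_2 \in \gibbsMeasures(\activity, \potential)$. By Lemma~\ref{lemma:feasible}, both $\gibbs_1$ and $\gibbs_2$ assign full mass to $\feasibleCountingMeasures$, so it suffices to bound the projected TV distance uniformly over pairs $(\xi_1, \xi_2) \in \feasibleCountingMeasures \times \feasibleCountingMeasures$.

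Fix such $\xi_1, \xi_2$ and, for each $n > m$, let $(P_t^{\xi_j})$ denote the Markov transition family associated with $\jumpKernel_{n \mid \xi_j}$. Note that $\mathbf{0} \in \countingMeasures_{n \mid \xi_j}$, since $\mathbf{0} + (\xi_j)_{(\subregion^{(n)})^c} \le \xi_j \in \feasibleCountingMeasures$ and $\feasibleCountingMeasures$ is hereditary. Insert $P^{\xi_1}_t(\mathbf{0}, \cdot)$ and $P^{\xi_2}_t(\mathbf{0}, \cdot)$ via the triangle inequality:
\begin{align*}
    \projDtv{\gibbs_{\subregion^{(n)} \mid \xi_1}}{\gibbs_{\subregion^{(n)} \mid \xi_2}}{\subregion^{(m)}}
    \le \sum_{j=1}^{2} \projDtv{P^{\xi_j}_t(\mathbf{0}, \cdot)}{\gibbs_{\subregion^{(n)} \mid \xi_j}}{\subregion^{(m)}}
    + \projDtv{P^{\xi_1}_t(\mathbf{0}, \cdot)}{P^{\xi_2}_t(\mathbf{0}, \cdot)}{\subregion^{(m)}}.
\end{align*}
By Lemma~\ref{lemma:jump_process_mixing}, together with the fact that projections do not increase TV distance, each of the first two terms is bounded by $\eulerE^{-\delta t} \activity \eulerE^{\localStability} \volume(\subregion^{(n)})$ for some $\delta > 0$ depending only on $\activity, \localStability, \weakTempered_\potential$. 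By Lemma~\ref{lemma:percolation}, the last term is at most $\eulerE^{-(n-m)}$ provided $t < (n-m)/(\eulerE^2 (6m+3)^{\dimension} \range^{\dimension} \activity \eulerE^{\localStability})$.

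To reconcile both constraints, set $t = t_n \coloneqq (n-m)/(2\eulerE^2 (6m+3)^{\dimension} \range^{\dimension} \activity \eulerE^{\localStability})$, which is admissible for the percolation bound. As $n \to \infty$ with $m$ fixed, $t_n$ grows linearly in $n$, so $\eulerE^{-\delta t_n} \volume(\subregion^{(n)}) = \eulerE^{-\delta t_n} ((2n+1)\range)^{\dimension} \to 0$, while $\eulerE^{-(n-m)} \to 0$ trivially. Crucially, both bounds depend on $(\xi_1, \xi_2)$ only through $\xi_1, \xi_2 \in \feasibleCountingMeasures$, so the essential supremum in Lemma~\ref{lemma:uniqueness_from_ssm} vanishes and the criterion gives $\gibbs_1 = \gibbs_2$.

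No substantive obstacle is left after Sections~\ref{sec:jump_process} and~\ref{sec:jump_process_for_gpp}: the hard analysis has already been packaged into Lemmas~\ref{lemma:jump_process_mixing} and~\ref{lemma:percolation}. The only balancing act is the choice of $t_n$: it must be small enough that the percolation depth $n-m$ dominates the path-counting entropy of Lemma~\ref{lemma:percolation}, yet large enough that $\eulerE^{-\delta t_n}$ overcomes the polynomial volume factor $\volume(\subregion^{(n)})$ appearing in the mixing bound. Linear growth of $t_n$ in $n-m$ accomplishes both, which is precisely the regime in which the rapid-mixing/slow-percolation strategy closes.
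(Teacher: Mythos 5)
Your proposal is correct and follows the paper's proof essentially line for line: the same exhausting sequence $\subregion^{(k)}$, the same reduction to feasible boundary conditions via \Cref{lemma:feasible}, the same triangle inequality through the time-$t$ laws of the two coupled birth-death chains started at $\mathbf{0}$, and the same combination of \Cref{lemma:jump_process_mixing} and \Cref{lemma:percolation}. The only cosmetic difference is that you pick an explicit $t_n$ growing linearly in $n-m$ and let $n\to\infty$, whereas the paper argues the existence of a suitable $t$ in the nonempty interval between the rapid-mixing lower bound and the percolation upper bound — both versions rest on the same observation that the interval widens for large $n$ because the lower endpoint grows logarithmically and the upper endpoint linearly.
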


\begin{proof}
    Suppose $\gibbsMeasures(\activity, \potential)$ is non-empty.
    Our goal is to use \Cref{lemma:uniqueness_from_ssm} to argue that, for all $\gibbs_1, \gibbs_2 \in \gibbsMeasures(\activity, \potential)$, it holds that $\gibbs_1 = \gibbs_2$.
    To this end, set $\subregion_k = [-(k + \frac{1}{2}) \range, (k + \frac{1}{2}) \range]^{\dimension}$ where $\range$ is an upper-bound on the range of $\potential$, and note that the sequence $(\subregion_k)_{k \in \N}$ is non-decreasing and satisfies assumption (\ref{lemma:uniqueness_from_ssm:cover}) of \Cref{lemma:uniqueness_from_ssm}.
    Moreover, note that by \Cref{lemma:feasible}, it holds that $\gibbs_1(\feasibleCountingMeasures) = 1$ and $\gibbs_2(\feasibleCountingMeasures) = 1$.
    Hence, it suffices to show that for every $k \in \N$
    \[
        \lim_{n \to \infty} \sup_{\xi, \zeta \in \feasibleCountingMeasures} \projDtv{\gibbs_{\subregion_{k + n} \mid \xi}}{\gibbs_{\subregion_{k + n} \mid \zeta}}{\subregion_k} = 0 .
    \]
    Let $\delta > 0$ be as in \Cref{lemma:mixing} and note that for every $\varepsilon > 0$, we can choose $n \ge -\ln \varepsilon$ large enough, depending on $\varepsilon$, $\activity$, $\localStability$, $\range$, $\dimension$, $\weakTempered_{\potential}$ and $k$, such that there is some $t \in \R_{\ge 0}$ with
    \begin{align}
        \delta^{-1} \ln\left( \frac{ \activity (2n + 2k + 1)^{\dimension} \range^{\dimension} \eulerE^{\localStability}}{\varepsilon} \right) \le t < \frac{n}{\eulerE^{2} (6k + 3)^{\dimension} \range^{\dimension} \activity \eulerE^{\localStability}} . \label{eq:choose_t}
    \end{align}
    Fix any such $n$ and let $\xi, \zeta \in \feasibleCountingMeasures$. 
    Let $(P_t)$ and $(Q_t)$ be the transition families associated with the jump kernels $\jumpKernel_{\subregion_{k + n} \mid \xi}$ and $\jumpKernel_{\subregion_{k + n} \mid \zeta}$ as in \eqref{eq:jump_kernel}.
    By the triangle inequality, we have
    \[
        \projDtv{\gibbs_{\subregion_{k + n} \mid \xi}}{\gibbs_{\subregion_{k + n} \mid \zeta}}{\subregion_k}
        \le \projDtv{\gibbs_{\subregion_{k + n} \mid \xi}}{P_t(\pmb{0}, \cdot )}{\subregion_k} + \projDtv{P_t(\pmb{0}, \cdot )}{Q_t(\pmb{0}, \cdot )}{\subregion_k} + \projDtv{Q_t(\pmb{0}, \cdot )}{\gibbs_{\subregion_{k + n} \mid \zeta}}{\subregion_k}
    \]
    for every $t \in \R_{\ge 0}$.
    Choosing any $t$ as in \eqref{eq:choose_t}, \Cref{lemma:mixing} yields
    \[
        \projDtv{\gibbs_{\subregion_{k + n} \mid \xi}}{P_t(\pmb{0}, \cdot )}{\subregion_k} \le \dtv{\gibbs_{\subregion_{k + n} \mid \xi}}{P_t(\pmb{0}, \cdot )} \le \varepsilon
    \]
    and analogously
    \[
        \projDtv{Q_t(\pmb{0}, \cdot )}{\gibbs_{\subregion_{k + n} \mid \zeta}}{\subregion_k} \le \dtv{Q_t(\pmb{0}, \cdot )}{\gibbs_{\subregion_{k + n} \mid \zeta}} \le \varepsilon.
    \]
    Further, for the same $t$, \Cref{lemma:percolation} and $n \ge - \ln \varepsilon$ yield
    \[
         \projDtv{P_t(\pmb{0}, \cdot )}{Q_t(\pmb{0}, \cdot )}{\subregion_k} \le \eulerE^{- n} \le \varepsilon.
    \]
    Since our choice of $n$ does not depend on $\xi$ and $\zeta$, we get
    \[
        \sup_{\xi, \zeta \in \feasibleCountingMeasures} \projDtv{\gibbs_{\subregion_{k + n} \mid \xi}}{\gibbs_{\subregion_{k + n} \mid \zeta}}{\subregion_k} \le 3 \varepsilon
    \]
    for every $n$ large enough, which proves the claim.
\end{proof}

Combining \Cref{thm:uniqueness} with known existence results for infinite-volume Gibbs measures, such as \Cref{thm:existence}, we obtain the following corollary.
\begin{corollary} \label{cor:uniqueness}
    In the setting of \Cref{thm:uniqueness}, it holds that $\size{\gibbsMeasures(\activity, \potential)} = 1$ for all $\activity < (\eulerE^{\localStability} \weakTempered_{\potential})^{-1}$.
\end{corollary}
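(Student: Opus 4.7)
The proof is immediate once both halves are in hand, so the plan is to simply combine the two directions. The hypotheses of \Cref{cor:uniqueness} are those of \Cref{thm:uniqueness}: namely, $\potential$ is a bounded-range pair potential on $\R^{\dimension}$ that is locally stable with constant $\localStability$ and weakly tempered with constant $\weakTempered_{\potential}$, and the activity satisfies $\activity < (\eulerE^{\localStability} \weakTempered_{\potential})^{-1}$. Under these hypotheses, \Cref{thm:uniqueness} yields $\size{\gibbsMeasures(\activity, \potential)} \le 1$.

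For the matching lower bound, I would invoke \Cref{thm:existence}, whose hypotheses require only that $\potential$ be locally stable and of bounded range (with no restriction on the activity). Both of these are satisfied here, so $\gibbsMeasures(\activity, \potential) \neq \emptyset$. Combining the two inequalities gives $\size{\gibbsMeasures(\activity, \potential)} = 1$, as claimed.

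The only check worth flagging is that the assumptions of \Cref{cor:uniqueness} are in fact strong enough to trigger \Cref{thm:existence}; this is immediate since weak temperedness is not needed for existence and the remaining assumptions (local stability and bounded range) coincide. Consequently there is no additional obstacle, and the corollary follows by a one-line argument stitching \Cref{thm:uniqueness} and \Cref{thm:existence} together.
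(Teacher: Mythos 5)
Your proof is correct and matches the paper's intended argument exactly: the corollary is obtained by combining \Cref{thm:uniqueness} (at most one Gibbs measure) with the existence result \Cref{thm:existence} (at least one), and you correctly check that the hypotheses of the corollary suffice to invoke both.
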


\section{Discussion} \label{sec:discussion}
We conclude the paper by comparing our result to the existing literature. 
We further give a detailed discussion of our assumptions and potential ways to extend our results in future work. 

\subsection{Comparison to existing bounds for absence of phase transitions}
To compare our results to the existing literature, it is useful to introduce the inverse temperature parameter $\beta \ge 0$.
For any fixed pair potential $\potential$ on $\R^{\dimension}$,
we denote by $\potential_{\beta} \coloneqq \beta \potential$ the pair potential at inverse temperature $\beta$.
Intuitively, changing $\beta$ scales the strength of interactions between particles: in the small temperature limit ($\beta \to \infty$) interactions between particles become more relevant for the behavior of the system, whereas in the large temperature limit ($\beta \to 0$) only hard-core interactions (cases where $\potential(\cdot, \cdot) = \infty$) prevail. 
We write $\weakTempered_{\beta} \coloneqq \weakTempered_{\potential_{\beta}}$ for the associated weak temperedness constant and $\tempered_{\beta} \coloneqq \sup_{x \in \R^{\dimension}} \int_{\R^{d}} |1 - \eulerE^{- \beta \potential(x, y)}| \diff y$ for the associated temperedness constant. 
Note that, if $\potential$ is (weakly) tempered, the same applies to $\potential_{\beta}$ for all $\beta$ (see for example \cite[Exercise 5.3]{jansen2018gibbsian}).
Moreover, if $\potential$ is locally stable with constant $\localStability$, then $\potential_{\beta}$ is locally stable with constant $\beta \localStability$.
Our bound for uniqueness of the infinite-volume Gibbs measure then translates into $\activity < (\eulerE^{\beta \localStability} \weakTempered_{\beta})^{-1}$ with the additional assumption that $\potential$ has bounded range.

We start by comparing our result with the classical Penrose--Ruelle bound of $\activity < (\eulerE^{2 \beta \stability +1} \tempered_{\beta})^{-1}$ for stable pair potentials, where $\stability$ is the stability constant of $\potential$, which first appeared in \cite{penrose1963convergence,ruelle1963correlation}.
Within this regime of $\activity$, uniqueness of infinite-volume Gibbs measures can be established by considering the Kirkwood--Salsburg equations, which describe the point density functions of any infinite-volume Gibbs point process as a fix point of a linear operator.
Arguing that this linear operator contracts under a suitable metric proves uniqueness of that fixed point and by extension shows that there is at most one Gibbs measure (see \cite[Chapters 5.6-5.8]{jansen2018gibbsian} for a detailed discussion).
In the setting of locally stable potentials, this approach yields uniqueness of infinite-volume Gibbs measures for $\activity < (\eulerE^{\beta \localStability  +1} \tempered_{\beta})^{-1}$.
Hence, our result yields an improvement by a factor of $\eulerE \tempered_{\beta}/\weakTempered_{\beta}$ at the cost of relying on a bounded-range assumption.
In particular, this improvement is lower-bounded by a factor of $\eulerE$, and if $\potential$ has some non-trivial negative part (i.e., attractive interactions), it becomes more substantial for large $\beta$ (i.e., low temperature).

The second bound we want to compare our result to is a recently obtained result by Qidong He \cite{he2024analyticity}.
They showed that for locally stable, tempered, and translation and rotation invariant pair potentials, the (empty-boundary) infinite-volume pressure $\lim_{n \to \infty}\volume(\subregion_n)^{-1} \log(\partitionFunction_{\subregion \mid \pmb{0}}(\activity))$ is an analytic function\footnote{Here, the limit is taken along a suitable sequence $\subregion_n \nearrow \R^{\dimension}$. See \cite{jansen2018gibbsian} for a detailed discussion.} for any $\activity < \eulerE^{2 (1 - W(\eulerE A_{\beta}/\tempered_{\beta}))}(\eulerE^{\beta \localStability + 1} \tempered_{\beta})^{-1}$, where $W$ denotes Lambert's W-function and $A_{\beta} \coloneqq \int_{\R^{\dimension}} \ind{\potential(0, x) < 0} \cdot (\eulerE^{-\beta \potential(0, x)} - 1) \diff x$.
The result is obtained by adapting a technique introduced by Michelen and Perkins \cite{michelen2020analyticity} for repulsive potentials (i.e., $\potential \ge  0$), which relies on showing contraction of a recursive integral identity for the (generalized complex) point density.
We point out that this result aims at a different notion of absence of a phase transition, and, to our knowledge, does not a priori imply uniqueness of the infinite-volume Gibbs measure in the DLR sense.
However, we believe that their proof can be modified to show uniqueness up to the same activity threshold via similar arguments as in \cite[Section 4]{michelen2020analyticity}, while handling the effect of the boundary condition as part of the modulation of the activity function in the sense of \cite{he2024analyticity}.
With this in mind, we observe that the relation of their bound to ours heavily depends on the inverse temperature $\beta$ and the potential in question.
In particular, if the potential is purely repulsive or the temperature is high (i.e., small $\beta$), their bound is up to a factor of $\eulerE$ larger than ours.
On the other hand, for potentials with a non-trivial attractive parts and low temperature (i.e., large $\beta$) our bound performs better as $W(\eulerE A_{\beta}/\tempered_{\beta}) \in [0, 1)$ while $\tempered_{\beta}/\weakTempered_{\beta} \to \infty$ for $\beta \to \infty$.
Again, this improvement comes at the cost that we require the potential to have bounded range.

Lastly, we compare our result to the bound for analyticity of the infinite-volume pressure (again with empty boundary) by Procacci and Yuhjtman \cite{procacci2017convergence}.
They show that for stable and weakly tempered potentials, the infinite-volume pressure is analytic for all $\activity < (\eulerE^{\beta \stability + 1} \weakTempered_{\beta})^{-1}$ by proving absolute convergence of the cluster expansion, a series expansion of $\log(\partitionFunction_{\subregion \mid \pmb{0}}(\activity))$.
In the setting of locally stable potentials, this translates to a bound of $\activity < (\eulerE^{\beta\localStability/2 + 1} \weakTempered_{\beta})^{-1}$.
Again, this approach aims at a different notion of phase transition.
However, as opposed to the result we discussed before, it is not known whether uniqueness of the infinite-volume Gibbs measure can be proven for the same activity regime.
On the one hand, the precise relation between conditions for convergence of the cluster expansion and uniqueness of Gibbs measures is elusive (see \cite{jansen2019cluster} for a more comprehensive discussion). 
On the other hand, it is not clear whether the activity bound is preserved when accounting for the effect of boundary conditions on the convergence criterion (see \cite{fialho2021cluster} for a similar observation).
If uniqueness of the infinite-volume Gibbs measure were shown to hold up to the convergence bound from \cite{procacci2017convergence}, then this would give an improvement over our result in the high-temperature regime ($\beta < 2 / \localStability$), while our result would still be stronger at low temperature.
However, a proof of this fact would be non-trivial, and it is not clear whether it should hold at all.
It would make for an interesting contrast to our previous comparison to \cite{he2024analyticity}, where our bound was better at low-temperature.

We finish our discussion of related uniqueness results by noting that for the special case of purely non-negative potentials, stronger results are known due to Michelen and Perkins \cite{michelen2020analyticity,michelen2020connective}, which improve our uniqueness regime by a factor of at least $\eulerE$. 
It remains open whether such an improvement carries over to potentials with non-trivial negative part.
As discussed before, a promising step towards this was presented in \cite{he2024analyticity}, which maintains this improvement at small $\beta$.

\subsection{Assumptions and extensions}
The first assumption we discuss is that $\potential$ needs to have bounded range.
This is in fact only required in the disagreement percolation argument in \Cref{sec:percolation}, where we use it to make sure that disagreements between any pair of birth-death processes only spread locally.
In contrast, our second main ingredient, which is that each birth-death process rapidly converges to a finite-volume Gibbs distribution, does not rely on it.
A similar observation is true for the fact that we restrict our main result to Euclidean space.
While the mixing result in \Cref{lemma:mixing} holds in any complete separable metric space, the disagreement percolation in \Cref{lemma:percolation} requires additional information on the growth rate of the underlying space.
Both of these assumptions are inherited from the discrete origin of our proof technique \cite{dyer2004mixing}, which is in studying lattice gasses that naturally satisfy such conditions.
However, one could hope to remove the bounded-range assumption by applying a suitable truncation of the pair potential at some range (maybe requiring additional assumptions on the decay rate of the pair potential). 

The second main assumption we discuss is local stability.
This assumption is relevant throughout most of our proofs: we use it for proving non-explosiveness of the jump kernel (in \Cref{lemma:jump_kernel_non_exp}), to bound the rate at which the Markov transition family converges to its stationary distribution (in \Cref{lemma:mixing} and \Cref{claim:drift}) and to bound the overall rate at which two coupled processes spawn a disagreement (in \Cref{lemma:percolation} and \Cref{lemma:jump_rate_bounds}).
All of these applications make use of the fact that local stability permits a bounding of the rate at which new points are generated uniformly over the configurations space of the process.
Interestingly, other approaches to prove uniqueness, such as proving contraction of the Kirkwood--Salsburg operator, encounter similar difficulties; there the problem is solved by observing that stability implies that at least one point in any configuration interacts in a ``locally stable manner'' with the rest of the configuration (see \cite[Chapter 4.2]{ruelle1999statistical} for details).
However, it is not clear how a similar strategy can be implemented here.

Finally, we would like to take a more detailed look at our obtained activity bound of $\activity < (\eulerE^{\localStability} \weakTempered_{\potential})^{-1}$.
This bound is solely due to the activity regime for which we can prove that each birth-death process converges rapidly to its stationary distribution.
Therefore any improvement on the activity bound for rapid mixing would immediately improve the uniqueness regime.
The main candidate for approaching this is to use a more suitable metric $f$ on the configuration space in the proof of \Cref{lemma:mixing} (see also \Cref{lemma:jump_process_mixing}) to show contraction of the coupling.
For the hard-sphere model (i.e., $\potential(x, y) = \infty \cdot \ind{\dist(x, y) < r}$ for some fixed $r \in \R_{\ge 0}$), this approach was used by Helmuth, Perkins and Petti \cite{helmuth2022correlation} to gain an additional factor of $2$ on the activity bound.
The intuition behind their metric was to weight pairs of configurations based on how likely they are to spawn disagreements.
In particular, this approach exploits the fact that, in the hard-sphere model, adding a point only decreases this likelihood. 
While this idea generalizes nicely for other repulsive potentials (i.e., $\potential \ge 0$), it becomes much harder to apply with attractive interactions, where adding a point might make it more likely to spawn new disagreements.
However, focusing on more specific models could be a reasonable step towards finding better conditions for rapid convergence of the associated birth-death dynamics.

\bibliographystyle{abbrv}
\bibliography{references}

\end{document}